\def\pone{\mathbbm{1}}
\newtheorem{theorem}{Theorem}[section]
\newtheorem{remark}[theorem]{Remark}
\newtheorem{definition}[theorem]{Definition}
\begin{document}\vglue -1.5cm
\title
{Applications of PDEs to the study of affine surface geometry}
\author
{P. Gilkey and \, X. Valle-Regueiro}
\thanks{Supported by projects ED431F 2017/03, and MTM2016-75897-P (Spain).}
\address{PBG: Mathematics Department, University of Oregon, Eugene OR 97403-1222, USA}
\email{gilkey@uoregon.edu}
\address{XVR: Facultade de Matem\'aticas, Universidade de Santiago de Compostela,
15782 Santiago de Compostela, Spain}
\email{javier.valle@usc.es}
\thanks{Supported by Project MTM2016-75897-P (AEI/FEDER, UE)}
\subjclass[2010]{53C21}
\keywords{Type~$\mathcal{A}$ affine surface, quasi-Einstein equation, affine Killing vector field,
locally homogeneous affine surface}
\begin{abstract}
If $\mathcal{M}=(M,\nabla)$ is an affine surface, let $\mathcal{Q}(\mathcal{M}):=\ker(\mathcal{H}+\frac1{m-1}\rho_s)$ be the
space of solutions to the quasi-Einstein equation for the crucial eigenvalue. 
Let $\tilde{\mathcal{M}}=(M,\tilde\nabla)$ be another affine structure on $M$ which is strongly projectively flat. 
We show that $\mathcal{Q}(\mathcal{M})=\mathcal{Q}(\tilde{\mathcal{M}})$ if and only if
$\nabla=\tilde\nabla$ and that $\mathcal{Q}(\mathcal{M})$ is linearly equivalent to $\mathcal{Q}(\tilde{\mathcal{M}})$ if
and only if $\mathcal{M}$ is linearly equivalent to $\tilde{\mathcal{M}}$. We use these observations to 
classify the flat Type~$\mathcal{A}$ connections up to linear equivalence, to classify the Type~$\mathcal{A}$
connections where the Ricci tensor has rank 1 up to linear equivalence, and to study
the moduli spaces of Type~$\mathcal{A}$ connections where the Ricci tensor is non-degenerate up to affine equivalence.
\end{abstract}
\maketitle

\section{Introduction}
The use of results in the theory of partial differential equations to study geometric questions
is a very classical one. One has, for example, the Hodge-de Rham theorem that the de Rham
cohomology groups of a compact smooth manifold can be identified with the space of harmonic
differential forms; Poincare duality and the Kunneth formula then follow
as does the Bochner vanishing theorem and the fact that the de Rham cohomology of a compact
Lie group can be computed in terms of the cohomology of its Lie algebra.
Applying similar techniques to the spin operator then yields, via the Lichnerowicz formula,
the fact that a compact 4-dimensional spin manifold with non-vanishing first
Pontrjagin class does not admit a metric of positive scalar curvature. One
may use heat equation methods to prove the Riemann-Roch formula for Riemann surfaces using the Dolbeault
operator. There
are many other examples.

Many, but not all, such applications require the manifold be compact and the operator to be
elliptic; in the case of manifolds with boundary one must impose suitable boundary conditions.
And most such applications require the additional structure of a Riemannian metric. By contrast,
in the present paper we will not impose any compactness conditions and we will work in the affine setting
without the structure of an auxiliary Riemannian metric; our analysis is purely local.
In this paper, we will study solutions to the quasi-Einstein
equation in the context of affine geometry. We will focus for the sake of simplicity on affine
homogeneous affine surface geometries of Type~$\mathcal{A}$ (see Definition~\ref{D1.1})
and obtain results concerning the geometry of associated moduli spaces
using purely analytical techniques. Many of these results are new. See, for example, Theorem~\ref{T2.3} where
we show that every Type~$\mathcal{A}$ affine surface geometry is strongly linearly projectively equivalent to a flat 
Type~$\mathcal{A}$
affine surface geometry. In addition, we also derive some previously known
results using analytical techniques that were previously established using techniques of differential geometry.
We hope that the methods introduced here prove useful in other applications to affine geometry.

Let $M$ be a smooth manifold of dimension $m$ which is equipped with a torsion free connection $\nabla$
on the tangent bundle of $M$; the pair $\mathcal{M}=(M,\nabla)$ is called an {\it affine manifold}.
In local coordinates, we adopt the {\it Einstein convention} and
sum over repeated indices to express $\nabla_{\partial_{x^i}}\partial_{x^j}=\Gamma_{ij}{}^k\partial_{x^k}$; the
Christoffel symbols $\Gamma=(\Gamma_{ij}{}^k)$ completely determine the connection. 
Since the connection $\nabla$ is torsion free, one has $\Gamma_{ij}{}^k=\Gamma_{ji}{}^k$.  
Given that we shall, for the most part,
be working only locally, we can assume $M$ is an open subset of $\mathbb{R}^m$ and let the affine structure be
defined by the Christoffel symbols.
If $a$, $b$, $c$, $d$, $e$, and $f$ are real constants, let 
$$\Gamma(a,b,c,d,e,f):=\left\{\begin{array}{lll}\Gamma_{11}{}^1=a,&\Gamma_{11}{}^2=b,&
\Gamma_{12}{}^1=\Gamma_{21}{}^1=c\\
\Gamma_{12}{}^2=\Gamma_{21}{}^2=d,&\Gamma_{22}{}^1=e,&\Gamma_{22}{}^2=f
\end{array}\right\}\,.$$

\begin{definition}\label{D1.1}\rm $M$ is said to be \emph{homogeneous} if for every two points there exists an affine transformation sending one to the other. 
\smallbreak\noindent{\bf Type $\mathcal{A}$:} $\mathcal{M}$ is said to be a {\it Type~$\mathcal{A}$ affine surface geometry}
 if the underlying
manifold is the translation group $M=\mathbb{R}^2$ and if
$\Gamma=\Gamma(a,b,c,d,e,f)$. The group action $(x^1,x^2)\rightarrow(x^1+a^1,x^2+a^2)$ for $(a^1,a^2)\in\mathbb{R}^2$
preserves this geometry so it is homogeneous; the Type~$\mathcal{A}$ connections are the left invariant connections on 
the Lie group $\mathbb{R}^2$.
\smallbreak\noindent{\bf Type $\mathcal{B}$:} $\mathcal{M}$ is said to be a {\it Type $\mathcal{B}$ affine surface geometry}
if the underlying
manifold is the $ax+b$ group $M=\mathbb{R}^+\times\mathbb{R}$ and if $\Gamma=(x^1)^{-1}\Gamma(a,b,c,d,e,f)$.
The group action $(x^1,x^2)\rightarrow(ax^1,ax^2+bx^1)$ for $a>0$ preserves this geometry so it is homogeneous; the
Type~$\mathcal{B}$ connections are the left invariant connections on the $ax+b$ group.
\smallbreak\noindent{\bf Type $\mathcal{C}$:} If $S^2$ is the sphere with the usual round metric and 
if $\nabla$ is the Levi-Civita connection, then $(S^2,\nabla)$ is said to be a {\it Type~$\mathcal{C}$ affine surface geometry}.
 Our notation at this point is a bit non-standard as several of the Type~$\mathcal{B}$ geometries also arise as constant
sectional curvature metrics and thus we have elected not to list these separately as Type~$\mathcal{C}$.
\end{definition}

 If $\mathcal{M}$ is a locally homogeneous affine surface, work of Opozda~\cite{Op04} shows that 
$\mathcal{M}$ is locally affine isomorphic to
one of these 3 geometries; there is a similar classification in the setting of surfaces with torsion which is due
to Arias-Marco and Kowalski~\cite{AMK08}. We also refer to Opozda~\cite{Opozda}, and
Guillot and S\'anchez-Godinez~\cite{G-SG}
for a discussion of more global questions and to Kowalski, Opozda, and Vlasek
\cite{KVOp2,KVOp} for related work. There has been much recent work using this
classification result; we refer, for example, to Derdzinski~\cite{D08},  Dusek~\cite{D09}, and Vanzurova \cite{V13}.

The three classes are not disjoint
as there are geometries which are both Type~$\mathcal{A}$ and Type~$\mathcal{B}$.
In this paper, we shall concentrate on Type~$\mathcal{A}$ structures; in a subsequent paper, we will give a
similar analysis for the Type~$\mathcal{B}$ structures.

Let $\mathcal{M}=(M,\nabla)$ be an affine manifold. If $g\in C^\infty(M)$, we perturb $\nabla$ setting
$
{}^g\nabla_XY:=\nabla_XY+X(g)Y+Y(g)X
$
to define ${}^g\mathcal{M}=(M,{}^g\nabla)$. $\tilde\nabla$ and $\nabla$ are said to be {\it strongly
projectively equivalent} if $\tilde\nabla={}^g\nabla$ for some $g$. In this situation,
$\mathcal{M}=(M,\nabla)$ and $\tilde{\mathcal{M}}=(M,\tilde\nabla)$ are 
said to be {\it strongly projectively equivalent}.
This implies, among other things, that the unparameterized geodesics of $\tilde{\mathcal{M}}$ and 
$\mathcal{M}$ coincide.
We say that $\nabla$ is {\it strongly projectively flat} if $\nabla$ is strongly projectively equivalent to a flat connection;
$\mathcal{M}=(M,\nabla)$ is strongly projectively flat if $\nabla$ is strongly projectively flat.

\begin{theorem}\label{T1.2}
Let $\mathcal{M}$ be a Type~$\mathcal{A}$ affine surface geometry. Then 
$\rho$ is symmetric, $\nabla\rho$ is totally symmetric, and $\mathcal{M}$ is strongly projectively flat.
\end{theorem}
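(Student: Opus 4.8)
The plan is to exploit the fact that a Type~$\mathcal{A}$ structure has \emph{constant} Christoffel symbols, so that the curvature $R_{ij}{}^k{}_l=\Gamma_{im}{}^k\Gamma_{jl}{}^m-\Gamma_{jm}{}^k\Gamma_{il}{}^m$ and hence the Ricci tensor $\rho$ are constant in $x$; this reduces every assertion to an algebraic identity in the six structure constants $(a,b,c,d,e,f)$.

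For the symmetry of $\rho$, I would first establish the identity, valid for any torsion free connection, that the skew part of $\rho$ is a universal multiple of $d\gamma$, where $\gamma_i:=\Gamma_{i\ell}{}^\ell$ is the trace one form. This follows by contracting the first Bianchi identity $R_{ij}{}^k{}_l+R_{jl}{}^k{}_i+R_{li}{}^k{}_j=0$: setting $k=i$ and summing gives $\rho_{jl}-\rho_{lj}=-(\partial_j\gamma_l-\partial_l\gamma_j)$, the quadratic Christoffel terms cancelling after relabelling the summation indices. Since $\Gamma$ is constant, $\gamma$ is a constant, hence closed, one form, so $d\gamma=0$ and $\rho$ is symmetric.

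For the total symmetry of $\nabla\rho$, I would use that $\rho$ is constant to write $(\nabla_i\rho)_{jk}=-\Gamma_{ij}{}^m\rho_{mk}-\Gamma_{ik}{}^m\rho_{jm}$. Symmetry in the last two slots is immediate from the symmetry of $\rho$ together with $\Gamma_{ij}{}^k=\Gamma_{ji}{}^k$, so it remains to prove symmetry under $i\leftrightarrow j$; after cancelling the common $\Gamma_{ij}{}^m\rho_{mk}$ term this reduces to the single identity
\[
\Gamma_{ik}{}^m\rho_{jm}=\Gamma_{jk}{}^m\rho_{im}.
\]
Equivalently, by the contracted second Bianchi identity one has $\nabla_i\rho_{jk}-\nabla_j\rho_{ik}=\nabla_aR_{ij}{}^a{}_k$, so the claim is the vanishing of this divergence of the curvature. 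This is the step I expect to be the main obstacle: the statement is false in higher dimensions, so it must genuinely use the surface hypothesis. I would exploit that in dimension $2$ the curvature has the form $R_{ij}{}^k{}_l=\varepsilon_{ij}S^k{}_l$ with $S$ determined by $\rho$, and that by the first assertion the connection is equiaffine (the closed form $\gamma$ is exact on $\mathbb{R}^2$, giving a parallel volume element $\varepsilon$), to reduce $\nabla_aR_{ij}{}^a{}_k$ to $\varepsilon_{ij}\nabla_aS^a{}_k$ and show that it vanishes; failing a slick argument, the identity is in any case a finite check in the six structure constants.

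Finally, strong projective flatness I would deduce from the first two assertions. In dimension $2$ the projective Weyl tensor vanishes identically, and for a connection with symmetric Ricci tensor the projective Schouten tensor is $\rho$ itself, so the only obstruction to projective flatness is the projective Cotton tensor $\nabla_i\rho_{jk}-\nabla_j\rho_{ik}$; this vanishes by the second assertion, so $\nabla$ is projectively equivalent to a flat connection $\nabla^0$. To upgrade ``projectively'' to ``strongly projectively'', note that the connecting projective one form is a constant multiple of $\gamma^0-\gamma$, a difference of two closed one forms (both $\rho$ and $\rho^0=0$ are symmetric) and hence closed, so on the simply connected manifold $\mathbb{R}^2$ it is exact, equal to $dg$. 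Thus $\nabla^0={}^g\nabla$ and $\mathcal{M}$ is strongly projectively flat.
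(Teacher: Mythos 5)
Your proposal is correct, but it takes a genuinely different route from the paper, whose proof is deliberately computational: the paper simply evaluates $\rho$ and all components of $\nabla\rho$ as explicit polynomials in the six structure constants (Equation~(\ref{E1.a}) and the displayed formulas for $\rho_{ij;k}$), reads off both symmetries by inspection, and then cites the classical equivalence (Eisenhart, Nomizu--Sasaki, Steglich) that an affine surface is strongly projectively flat if and only if $\rho$ and $\nabla\rho$ are totally symmetric. Your step for the symmetry of $\rho$ is more conceptual and perfectly valid: the contracted first Bianchi identity does give $\rho_{jl}-\rho_{lj}=\pm(d\gamma)_{jl}$, and $\gamma$ is constant here. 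Your third step re-derives, rather than cites, the relevant direction of the classical criterion, and the key upgrade from ``projectively flat'' to ``strongly projectively flat'' via $\omega=\frac1{m+1}(\gamma^0-\gamma)$ being closed (both Ricci tensors symmetric) and hence exact on a simply connected surface is correct and instructive; note that you still invoke one black box, the dimension-two Liouville-type theorem that vanishing of the projective Cotton tensor implies projective flatness, which plays exactly the role of the paper's citation.

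One caution on your middle step: the ``slick argument'' you hope for is circular as sketched. With $\varepsilon$ the parallel volume form one has $S^a{}_k=\varepsilon^{ja}\rho_{jk}$, so $\nabla_aS^a{}_k=\varepsilon^{ja}\nabla_a\rho_{jk}$ is, up to a factor of $\varepsilon^{12}$, precisely the Cotton expression $\nabla_2\rho_{1k}-\nabla_1\rho_{2k}$ you are trying to show vanishes; the contracted second Bianchi identity merely restates the problem and buys nothing. So your fallback finite check is genuinely needed --- but it is both legitimate and quick: after your (correct) reduction to $\Gamma_{ik}{}^m\rho_{jm}=\Gamma_{jk}{}^m\rho_{im}$, only $i=1$, $j=2$ is nontrivial, and the claim amounts to the two scalar identities
\begin{equation*}
a\rho_{12}+b\rho_{22}=c\rho_{11}+d\rho_{12},\qquad
c\rho_{12}+d\rho_{22}=e\rho_{11}+f\rho_{12},
\end{equation*}
both of which hold identically for the $\rho$ of Equation~(\ref{E1.a}). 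At that point you are performing a leaner version of the paper's computation (two scalar identities instead of the four components of $\nabla\rho$), so the two proofs converge on the same algebra; what your approach buys is an explanation of \emph{why} the symmetries hold (closedness of the trace form, and the dimension-two collapse of the projective obstructions), at the cost of importing more projective differential geometry than the paper's self-contained check.
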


\begin{proof} Let $\Gamma=\Gamma(a,b,c,d,e,f)$ define a Type~$\mathcal{A}$ affine surface geometry.
We show $\rho$ is symmetric and that $\nabla\rho$ is totally symmetric by computing:
\goodbreak\begin{equation}\label{E1.a}\begin{array}{l}
\rho_{11}=-b c + a d - d^2 + b f,\quad\rho_{12}=\rho_{21}=c d - b e,\\
\rho_{22}=-c^2 + a e - d e + c f\end{array}\end{equation}
and
\medbreak\qquad\qquad\qquad$\rho_{11;1}=2 a b c - 2 a^2 d - 2 b c d + 2 a d^2 + 2 b^2 e - 2 a b f$,
\smallbreak\qquad\qquad\qquad$\rho_{11;2}=\rho_{12;1}=\rho_{21;1}=2 b c^2 - 2 a c d + 2 b d e - 2 b c f$,
\smallbreak\qquad\qquad\qquad$\rho_{12;2}=\rho_{21;2}=\rho_{22;1}=2 b c e - 2 a d e + 2 d^2 e - 2 c d f$,
\smallbreak\qquad\qquad\qquad$\rho_{22;2}=-2 c d e + 2 b e^2 + 2 c^2 f - 2 a e f + 2 d e f - 2 c f^2$.
\medbreak\noindent An affine surface $\mathcal{M}$ is strongly projectively flat if and only if both $\rho$ and $\nabla\rho$
are totally symmetric (see, for example \cite{E70, NS} and also \cite{S95} for related results). Thus $\mathcal{M}$ is strongly projectively flat.
\end{proof}

The \emph{affine quasi-Einstein equation} will play a central role in our investigation.
Let $\mathcal{H}=\mathcal{H}_\nabla$ be the {\it Hessian} of an affine manifold:
\smallbreak\centerline{
$\mathcal{H}_\nabla f=(\partial_{x^i}\partial_{x^j}f-\Gamma_{ij}{}^k\partial_{x^k}f)dx^i\otimes dx^j\in S^2(M)$.}
\smallbreak\noindent
 Let $\rho=\rho_{\nabla}$ be the Ricci tensor of $\nabla$ and $\rho_s=\rho_{\nabla,s}$ the associated symmetric Ricci tensor.
 The {\it affine quasi-Einstein operator} (see, for example, \cite{BGGV18})
 is the linear second order partial differential operator
 $\mathcal{H}_\nabla f-\mu f\rho_{\nabla,s}:C^\infty(M)\rightarrow C^\infty(S^2 (M))$.
 The eigenvalue $\mu=-\frac1{m-1}$, where $m=\operatorname{dim}{M}$, plays a distinguished role. We set
\smallbreak\noindent\centerline{
 $\mathcal{Q}(\mathcal{M}):=\{f\in C^\infty(M):\mathcal{H}_\nabla f+\textstyle\frac1{m-1}f\rho_{\nabla,s}=0\}$.}
\smallbreak\noindent We refer to \cite{BGGV18} for the proof of the following results.

\begin{theorem}\label{T1.3}
Let $\mathcal{M}$ be a connected affine manifold of dimension $m$. 
\begin{enumerate}
\item  $\mathcal{Q}({}^g\mathcal{M})=e^g\mathcal{Q}(\mathcal{M})$.
\item If $f\in\mathcal{Q}(\mathcal{M})$ satisfies $f(P)=0$ and $df(P)=0$ for some $P$, then $f\equiv0$. 
\item $\dim\{\mathcal{Q}(\mathcal{M})\}\le m+1$.
If $M$ is simply connected, then $\dim\{\mathcal{Q}(\mathcal{M})\}=m+1$ if and only if $\mathcal{M}$ is strongly
projectively flat.
\end{enumerate}
\end{theorem}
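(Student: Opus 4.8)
The plan is to prove the three assertions in order, since each rests on the same structural fact: at the crucial eigenvalue $\mu=-\frac1{m-1}$ the quasi-Einstein operator closes up on the $1$-jet of $f$. For assertion (1) I would compute directly in coordinates. Writing the perturbed Christoffel symbols as ${}^g\Gamma_{ij}{}^k=\Gamma_{ij}{}^k+\delta_i^k\partial_jg+\delta_j^k\partial_ig$, a short differentiation of a product $e^g u$ yields the transformation of the Hessian, and the only extra input is the standard projective transformation rule for the symmetric Ricci tensor, namely
\[
\mathcal{H}_{{}^g\nabla}(e^g u)=e^g\bigl(\mathcal{H}_\nabla u+u\,(\mathcal{H}_\nabla g-dg\otimes dg)\bigr),
\qquad
\rho_{{}^g\nabla,s}=\rho_{\nabla,s}-(m-1)(\mathcal{H}_\nabla g-dg\otimes dg);
\]
both correction terms are symmetric because $\nabla$ is torsion free. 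Substituting these into the operator for ${}^g\nabla$ applied to $e^g u$, the two copies of $\mathcal{H}_\nabla g-dg\otimes dg$ cancel precisely because of the factor $\frac1{m-1}$, leaving
\[
\mathcal{H}_{{}^g\nabla}(e^g u)+\tfrac1{m-1}(e^g u)\,\rho_{{}^g\nabla,s}
=e^g\Bigl(\mathcal{H}_\nabla u+\tfrac1{m-1}\,u\,\rho_{\nabla,s}\Bigr),
\]
so $e^g u\in\mathcal{Q}({}^g\mathcal{M})$ if and only if $u\in\mathcal{Q}(\mathcal{M})$. This cancellation is exactly what singles out the eigenvalue $-\frac1{m-1}$.

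For assertion (2) I would recast the equation as a closed first-order linear system. Solving for the second derivatives gives
\[
\partial_i\partial_j f=\Gamma_{ij}{}^k\partial_k f-\tfrac1{m-1}f\,(\rho_{\nabla,s})_{ij},
\]
so the $\mathbb{R}^{m+1}$-valued function $Jf:=(f,\partial_1 f,\dots,\partial_m f)$ satisfies $\partial_i(Jf)=A_i\,Jf$ with smooth matrix coefficients $A_i$ built from $\Gamma$ and $\rho_{\nabla,s}$. Along any smooth curve issuing from $P$ this restricts to a linear ODE for $Jf$, and the hypotheses $f(P)=0$, $df(P)=0$ say exactly that $Jf(P)=0$; uniqueness for linear ODEs forces $Jf\equiv0$ along the curve, and connectedness propagates this to all of $M$, so $f\equiv0$.

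Assertion (3) follows from the same system. By (2) the linear evaluation $f\mapsto Jf(P)\in\mathbb{R}^{m+1}$ is injective on $\mathcal{Q}(\mathcal{M})$, giving $\dim\mathcal{Q}(\mathcal{M})\le m+1$ at once. For the equivalence I would read $\partial_i(Jf)=A_i\,Jf$ as the condition that $Jf$ be parallel for the connection $D:=d-A_i\,dx^i$ on the trivial rank-$(m+1)$ bundle; on a simply connected $M$ the space of $D$-parallel sections has dimension $m+1$ precisely when $D$ is flat, and this is exactly $\dim\mathcal{Q}(\mathcal{M})=m+1$. To match flatness of $D$ with strong projective flatness, the direction $(\Leftarrow)$ is cleanest via (1): if $\nabla={}^g\nabla^0$ with $\nabla^0$ flat, then on a simply connected $M$ the connection $\nabla^0$ admits global affine coordinates, so $\mathcal{Q}(\mathcal{M}^0)=\ker\mathcal{H}_{\nabla^0}$ is the $(m+1)$-dimensional space of affine functions and $\mathcal{Q}(\mathcal{M})=e^g\mathcal{Q}(\mathcal{M}^0)$ inherits this dimension. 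For $(\Rightarrow)$, I would pick $f\in\mathcal{Q}(\mathcal{M})$ with $f(P)=1$, set $g:=\log f$ near $P$, and note by (1) that the constant $1=e^{-g}f$ lies in $\mathcal{Q}({}^{-g}\mathcal{M})$; feeding $f\equiv1$ into the equation forces $\rho_{{}^{-g}\nabla,s}=0$, whence $\mathcal{Q}({}^{-g}\mathcal{M})=\ker\mathcal{H}_{{}^{-g}\nabla}$ is still $(m+1)$-dimensional. Choosing $m$ such solutions whose differentials form a coframe and using them as coordinates makes the Christoffel symbols of ${}^{-g}\nabla$ vanish, so ${}^{-g}\nabla$ is flat and $\nabla={}^{g}({}^{-g}\nabla)$ is strongly projectively flat.

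The hard part will be the $(\Rightarrow)$ half of (3): passing from abstract flatness of the prolongation connection $D$ to an honest flat representative in the projective class. The normalization argument above does this locally, and I would need to confirm that the curvature of $D$ indeed reproduces the symmetry of $\rho$ and the total symmetry of $\nabla\rho$ that govern strong projective flatness in the sense of the references cited for Theorem~\ref{T1.2}, and then globalize the local flat coordinates over the simply connected $M$ by a standard developing-map argument. The coefficient $(m-1)$ in the Ricci transformation rule and the exact cancellation in (1) should also be checked against the conventions fixed in the excerpt.
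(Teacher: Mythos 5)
The paper contains no proof of Theorem~\ref{T1.3} to compare against --- it defers entirely to \cite{BGGV18} --- so your proposal must be measured against the standard argument of that reference, and in its core it matches: the equivariance computation in (1) is correct as stated (with $\mathcal{H}_{{}^g\nabla}(e^gu)=e^g\bigl(\mathcal{H}_\nabla u+u(\mathcal{H}_\nabla g-dg\otimes dg)\bigr)$ and $\rho_{{}^g\nabla,s}=\rho_{\nabla,s}-(m-1)(\mathcal{H}_\nabla g-dg\otimes dg)$, the factor $\frac1{m-1}$ cancels both correction terms exactly); the prolongation to the closed first-order system $\partial_i(Jf)=A_i\,Jf$ on the $1$-jet gives (2) by ODE uniqueness along curves, and the bound $\dim\{\mathcal{Q}\}\le m+1$ by injectivity of $f\mapsto Jf(P)$. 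Your normalization argument for the $(\Rightarrow)$ half of (3) --- forcing $\pone\in\mathcal{Q}({}^{-\log f}\mathcal{M})$, hence $\rho_{{}^{-\log f}\nabla,s}=0$, then building flat coordinates from $m$ elements of $\ker\mathcal{H}$ with independent differentials --- is the right mechanism and is complete as a \emph{local} argument. In $(\Leftarrow)$, ``global affine coordinates'' is a slight misstatement: a flat connection on a simply connected $M$ gives the components of a developing map (a local diffeomorphism, not necessarily injective), but your span argument only needs $m$ functions with parallel, pointwise-independent differentials, so this is harmless.

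The genuine problem is the step you yourself flag as ``the hard part'': globalizing $(\Rightarrow)$ by a developing-map argument. That globalization is impossible, and the globally-read statement is in fact false: on the round sphere $S^m$ ($m\ge2$) one has $\frac1{m-1}\rho_s=g$, so the quasi-Einstein equation is Obata's equation $\mathcal{H}f+fg=0$, whose solution space (restrictions of linear functions on $\mathbb{R}^{m+1}$) has dimension exactly $m+1$; yet $S^m$ admits no flat torsion-free connection whatsoever, since a developing map would be a local diffeomorphism from a compact simply connected manifold onto an open, compact, hence all of, $\mathbb{R}^m$ --- so no global $g$ with ${}^{-g}\nabla$ flat can exist. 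The conclusion is that ``strongly projectively flat'' in this theorem must be understood as a local property, consistent with the local Eisenhart criterion ($\rho$ symmetric and $\nabla\rho$ totally symmetric) invoked in the proof of Theorem~\ref{T1.2} and with the Type~$\mathcal{C}$ geometry of this very paper; with that reading your local normalization argument already finishes the proof, and neither the patching you propose nor the verification of ``the curvature of $D$ versus the symmetry of $\nabla\rho$'' is needed. (For the Type~$\mathcal{A}$ geometries to which the paper applies the theorem, globality is recovered separately and explicitly, with $g$ linear, in Theorem~\ref{T2.3}.)
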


The following result is the major new analytical result of this paper.

\begin{theorem}\label{T1.4}
Let $\mathcal{M}_i:=(M,\nabla_i)$ be  two strongly projectively flat affine structures on the same underlying simply connected manifold $M$ for
$i=1,2$.
Let $\Xi$ be a diffeomorphism of $M$.
\begin{enumerate}
\item If $\mathcal{Q}(\mathcal{M}_1)=\mathcal{Q}(\mathcal{M}_2)$, then $\nabla_1=\nabla_2$.
\item If $\Xi^*\mathcal{Q}(\mathcal{M}_1)=\mathcal{Q}(\mathcal{M}_2)$, then $\Xi^*\nabla_1=\nabla_2$.
\end{enumerate}\end{theorem}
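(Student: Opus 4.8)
The plan is to prove assertion (1) directly by subtracting the two quasi-Einstein systems and exploiting the fact that, on a simply connected strongly projectively flat manifold, the $1$-jet of a solution at a point may be prescribed freely. Assertion (2) will then reduce to (1) via the naturality of $\mathcal{Q}$ under pullback by a diffeomorphism.

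First I would set up the underlying linear algebra for (1). Since $M$ is simply connected and each $\mathcal{M}_i$ is strongly projectively flat, Theorem~\ref{T1.3}(3) gives $\dim\mathcal{Q}(\mathcal{M}_i)=m+1$. Fixing a point $P\in M$, I would consider the $1$-jet evaluation map $J_P\colon\mathcal{Q}(\mathcal{M}_i)\to\mathbb{R}^{m+1}$ sending $f$ to $(f(P),\partial_1 f(P),\dots,\partial_m f(P))$. By Theorem~\ref{T1.3}(2) the kernel of $J_P$ is trivial, so $J_P$ is injective; as domain and target both have dimension $m+1$, it is a linear isomorphism. Hence every prescribed vector in $\mathbb{R}^{m+1}$ is realized as the $1$-jet at $P$ of some $f$ in the common space $\mathcal{Q}(\mathcal{M}_1)=\mathcal{Q}(\mathcal{M}_2)$.

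Next I would extract an identity relating the two connections. Writing the quasi-Einstein equation in coordinates as $\partial_i\partial_j f-{}^a\Gamma_{ij}{}^k\partial_k f+\frac1{m-1}({}^a\rho_s)_{ij}f=0$ for $a=1,2$, the purely second-order term is connection-independent, so subtracting the two systems for a common solution $f$ yields
\begin{equation*}
-({}^1\Gamma_{ij}{}^k-{}^2\Gamma_{ij}{}^k)\,\partial_k f+\textstyle\frac1{m-1}\bigl(({}^1\rho_s)_{ij}-({}^2\rho_s)_{ij}\bigr)f=0
\end{equation*}
pointwise, for every $f\in\mathcal{Q}(\mathcal{M}_1)=\mathcal{Q}(\mathcal{M}_2)$. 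Evaluating at $P$ and using the surjectivity of $J_P$ to choose $f(P)$ and the $\partial_k f(P)$ independently forces each coefficient to vanish: taking $f(P)=0$ with a single first derivative equal to $1$ gives ${}^1\Gamma_{ij}{}^k(P)={}^2\Gamma_{ij}{}^k(P)$, while taking $f(P)=1$ with vanishing derivatives gives ${}^1\rho_s(P)={}^2\rho_s(P)$. Since $P$ is arbitrary, $\nabla_1=\nabla_2$, proving (1). For (2), the key point is the naturality identity $\mathcal{Q}(M,\Xi^*\nabla_1)=\Xi^*\mathcal{Q}(\mathcal{M}_1)$: because $\Xi$ is by construction an affine isomorphism from $(M,\Xi^*\nabla_1)$ to $(M,\nabla_1)$, and both $\mathcal{H}_\nabla$ and $\rho_s$ are natural under affine isomorphisms, one has $f\in\mathcal{Q}(\mathcal{M}_1)$ if and only if $\Xi^*f\in\mathcal{Q}(M,\Xi^*\nabla_1)$. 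Moreover $(M,\Xi^*\nabla_1)$ is again strongly projectively flat, since this property is preserved under affine isomorphism. The hypothesis $\Xi^*\mathcal{Q}(\mathcal{M}_1)=\mathcal{Q}(\mathcal{M}_2)$ then reads $\mathcal{Q}(M,\Xi^*\nabla_1)=\mathcal{Q}(\mathcal{M}_2)$, and applying part~(1) to the pair $(M,\Xi^*\nabla_1)$, $(M,\nabla_2)$ gives $\Xi^*\nabla_1=\nabla_2$.

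I expect the only genuine obstacle to be the careful verification of the naturality identity used in (2): one must confirm that the Hessian $\mathcal{H}_\nabla$ and symmetric Ricci tensor $\rho_s$ transform correctly under $\Xi$ so that solutions of the quasi-Einstein equation pull back to solutions. Everything else is elementary linear algebra, driven entirely by the jet-determination property of Theorem~\ref{T1.3}(2) together with the maximal-dimension statement of Theorem~\ref{T1.3}(3).
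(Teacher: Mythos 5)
Your proof is correct, but it takes a genuinely different route from the paper's for assertion~(1). The paper argues in two stages: it first treats the special case where $\nabla_1$ is flat, choosing local coordinates in which the Christoffel symbols of $\nabla_1$ vanish so that $\{\pone,x^1,\dots,x^m\}$ lie in the common solution space, and then feeding $\pone$ into the $\nabla_2$-equation to force $\rho_{\nabla_2,s}=0$ and the coordinate functions $x^\ell$ to force ${}^2\Gamma=0$; the general case is then reduced to the flat one by choosing $g$ with ${}^{-g}\nabla_1$ flat and invoking the transformation rule $\mathcal{Q}({}^g\mathcal{M})=e^g\mathcal{Q}(\mathcal{M})$ of Theorem~\ref{T1.3}(1). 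You instead dispose of the general case in one stroke: the jet-evaluation map $J_P$ is a linear isomorphism onto $\mathbb{R}^{m+1}$ by Theorem~\ref{T1.3}(2,3), and subtracting the two quasi-Einstein systems for a common solution kills the second-order term and leaves a pointwise identity that is linear in the $1$-jet, whose coefficients must vanish since $1$-jets at $P$ can be prescribed freely. Your approach buys uniformity — no reduction to the flat case and no use of the $e^g$-transformation rule at all in part~(1) — and it delivers the equality ${}^1\rho_s={}^2\rho_s$ as an explicit (if redundant) byproduct; the paper's route buys economy of testing (only the specific solutions $\pone$ and $x^\ell$ are needed, not surjectivity of the jet map) and isolates the flat-model-plus-projective-change mechanism that is reused verbatim in the proofs of Theorems~\ref{T2.1} and~\ref{T2.3}. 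Your treatment of assertion~(2) — the naturality identity $\Xi^*\mathcal{Q}(M,\nabla)=\mathcal{Q}(M,\Xi^*\nabla)$ followed by an application of assertion~(1) — coincides with the paper's, and your observation that one must also note that $\Xi^*\nabla_1$ is again strongly projectively flat (so that part~(1) applies to the pair $(\Xi^*\nabla_1,\nabla_2)$) is a hypothesis-check the paper leaves implicit.
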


\begin{proof} Since $M$ is simply connected, $\dim\{\mathcal{Q}(\mathcal{M}_i)\}=3$ by Theorem~\ref{T1.3}.
We first establish Assertion~(1) under the stronger assertion that $\nabla_1$ is flat.
Fix $P\in M$. Let $(x^1,\dots,x^m)$ be coordinates on
an open neighborhood $\mathcal{O}$ of $P$ so that all the Christoffel symbols of $\nabla_1$ vanish. This
implies that $\rho_{\nabla_1}=0$ and consequently
$\mathcal{H}_{\nabla_1}f+\frac1{m-1}\rho_{\nabla_1,s}f=\partial_{x^i}\partial_{x^j}f$.
Let $\pone$ be the function which is identically 1.
We have $\{\pone,x^1,\dots,x^m\}\subset\mathcal{Q}(\mathcal{O},\nabla_1)$. 
Because $\pone\in\mathcal{Q}(\mathcal{O},\nabla_1)=\mathcal{Q}(\mathcal{O},\nabla_2)$,
we have $0=\mathcal{H}_{\nabla_2}\pone+\rho_{\nabla_2,s}=\rho_{\nabla_2,s}$.
Since $x^\ell\in\mathcal{Q}(\mathcal{O},\nabla_1)=\mathcal{Q}(\mathcal{O},\nabla_2)$ and
 $\rho_{\nabla_2,s}=0$,
$$
0=\mathcal{H}_{\nabla_2}x^\ell=
(\partial_{x^i}\partial_{x^j}x^\ell+{}^2\Gamma_{ij}{}^k\partial_{x^k}x^\ell)dx^i\otimes dx^j={}^2\Gamma_{ij}{}^\ell dx^i\otimes dx^j
$$
for any $\ell$. This implies ${}^2\Gamma=0$ so $\nabla_1=\nabla_2$ near $P$. As $P$ was
arbitrary, $\nabla_1=\nabla_2$.

We now turn to the general case and assume only that
$\mathcal{M}_1$ is strongly projectively flat. Choose $g$ so $\tilde\nabla_1:={}^{-g}\nabla_1$ is flat.
Assume $\mathcal{Q}(\mathcal{M}_1)=\mathcal{Q}(\mathcal{M}_2)$.
Let $\tilde\nabla_2:={}^{-g}\nabla_2$. By Theorem~\ref{T1.3}, 
$$
\mathcal{Q}(M,\tilde\nabla_2)=e^{-g}\mathcal{Q}(M,\nabla_2)=e^{-g}\mathcal{Q}(M,\nabla_1)=\mathcal{Q}(M,\tilde\nabla_1)\,.
$$
Since $\tilde\nabla_1$ is flat, $\tilde\nabla_2=\tilde\nabla_1$ and
$\nabla_2={}^g\tilde\nabla_2={}^g\tilde\nabla_1=\nabla_1$.
This proves Assertion~(1). Assertion~(2) follows from Assertion~(1) since
$\Xi^*\mathcal{Q}(M,\nabla)=\mathcal{Q}(M,\Xi^*\nabla)$.
\end{proof}

In the rest of this paper, we present applications of Theorem~\ref{T1.4} in the context of Type~$\mathcal{A}$
surface geometries; by Theorem~\ref{T1.3} all these geometries are strongly projectively flat.
In Section~\ref{S2}, we classify the possible forms of $\mathcal{Q}(\mathcal{M})$
where $\mathcal{M}$ is a Type~$\mathcal{A}$ affine surface geometry. In Section~\ref{S3}, 
we study various moduli spaces of Type~$\mathcal{A}$
surface geometries up to linear equivalence.

 In a subsequent paper \cite{GV18x}, we will use the results of Section~\ref{S2} to determine, up
to linear equivalence, which Type~$\mathcal{A}$ surface geometries are geodesically complete
and to re-derive results of D'Ascanio et al.~\cite{DGP17} using different methods.

\section{Relating $\mathcal{M}$ and $\mathcal{Q}$ for Type~$\mathcal{A}$ affine surface geometries}\label{S2}

Let $\Gamma_0$ be defined by taking 
$\Gamma_{ij}{}^k=0$ for  all $i$, $j$, and $k$. The following is a useful technical observation
which holds quite generally.

\begin{theorem}\label{T2.1}
Let $\mathcal{M}=(M,\nabla)$ be a strongly projectively flat simply connected affine manifold of dimension $m$.
\begin{enumerate}
\item Suppose that $\nabla$ is flat. Let $\{\pone,\phi_1,\dots,\phi_m\}$ be a basis for $\mathcal{Q}(\mathcal{M})$.
Let $\Phi:=\{\phi_1,\dots,\phi_m\}$. Then $\det(d\Phi)\ne0$ and $\Phi^*\Gamma_0=\Gamma$.
\item If $\mathcal{M}$ is a surface, then $\mathcal{Q}(\mathcal{M})\ne e^{g(x^1,x^2)}\operatorname{Span}\{f_1(x^1),f_2(x^1),f_3(x^1)\}$.
\end{enumerate}\end{theorem}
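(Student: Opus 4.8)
The plan is to extract a single linear-algebraic tool from Theorem~\ref{T1.3} and apply it to both assertions. For a simply connected strongly projectively flat $\mathcal{M}$ of dimension $m$ we have $\dim\mathcal{Q}(\mathcal{M})=m+1$ by Theorem~\ref{T1.3}(3), while Theorem~\ref{T1.3}(2) says the $1$-jet evaluation
$$j_P\colon\mathcal{Q}(\mathcal{M})\to\mathbb{R}^{m+1},\qquad j_P(f)=(f(P),\partial_{x^1}f(P),\dots,\partial_{x^m}f(P))$$
has trivial kernel. A dimension count then makes $j_P$ a linear isomorphism at every point $P$, and this is the fact I would invoke repeatedly.

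For Assertion~(1), flatness gives $\rho_\nabla=0$, so the quasi-Einstein operator collapses to $\mathcal{H}_\nabla$ and each $\phi_i$ satisfies $\mathcal{H}_\nabla\phi_i=0$. To obtain $\det(d\Phi)\neq0$ at $P$ I would note that $j_P(\pone)=(1,0,\dots,0)$; hence requiring $\{j_P(\pone),j_P(\phi_1),\dots,j_P(\phi_m)\}$ to be a basis of $\mathbb{R}^{m+1}$ forces the minor $(\partial_{x^j}\phi_i(P))_{i,j}$, which is exactly $d\Phi(P)$, to be invertible. Thus $\Phi$ is a local diffeomorphism and may serve as a coordinate system. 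Rewriting $\mathcal{H}_\nabla\phi_i=0$ in the $\phi$-coordinates yields $0=\partial_{\phi^a}\partial_{\phi^b}\phi^i-{}^\phi\Gamma_{ab}{}^c\partial_{\phi^c}\phi^i=-{}^\phi\Gamma_{ab}{}^i$, so every Christoffel symbol of $\nabla$ vanishes in these coordinates. Since $\Gamma_0$ has vanishing Christoffel symbols in the standard coordinates of the target, this is precisely the identity $\Phi^*\Gamma_0=\Gamma$.

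For Assertion~(2), I would first strip off the conformal factor. Assume for contradiction that $\mathcal{Q}(\mathcal{M})=e^{g}\operatorname{Span}\{f_1(x^1),f_2(x^1),f_3(x^1)\}$. By Theorem~\ref{T1.3}(1), $\mathcal{Q}({}^{-g}\mathcal{M})=e^{-g}\mathcal{Q}(\mathcal{M})=\operatorname{Span}\{f_1(x^1),f_2(x^1),f_3(x^1)\}$, and ${}^{-g}\mathcal{M}$ is still simply connected and strongly projectively flat because ${}^{a}({}^{b}\nabla)={}^{a+b}\nabla$ exhibits ${}^{-g}\nabla$ as strongly projectively equivalent to the same flat connection as $\nabla$. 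I may therefore assume $g=0$, so that every element of $\mathcal{Q}(\mathcal{M})$ is a function of $x^1$ alone and hence satisfies $\partial_{x^2}f\equiv0$. Then the image of $j_P$ lies in the $2$-dimensional subspace $\{(a,b,0)\}\subset\mathbb{R}^3$, which is incompatible with the surjectivity of $j_P$ onto $\mathbb{R}^3$ established above; this contradiction proves the claim.

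The two coordinate computations are routine once the framework is set up, so I expect the only step needing genuine care to be the reduction in Assertion~(2): one must confirm that passing to ${}^{-g}\mathcal{M}$ preserves both simple connectivity and strong projective flatness, so that $\dim\mathcal{Q}({}^{-g}\mathcal{M})=3$ and the $1$-jet isomorphism remains available. Conceptually, the crux of the whole argument is the recognition that the quasi-Einstein solution space is faithfully recorded by the $1$-jet of its elements at a single point, which immediately constrains both the rank of $d\Phi$ and the possible coordinate dependence of the solutions.
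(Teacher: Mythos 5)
Your proposal is correct and follows essentially the same route as the paper: your $1$-jet evaluation isomorphism $j_P$ is exactly the paper's use of Theorem~\ref{T1.3}(2)--(3), both to force $\det(d\Phi)\ne0$ and to rule out a $\mathcal{Q}$ spanned (after stripping $e^g$ via Theorem~\ref{T1.3}(1)) by functions of $x^1$ alone. The only cosmetic difference is that where the paper cites Theorem~\ref{T1.4} to conclude $\Phi^*\Gamma_0=\Gamma$, you inline the vanishing-Christoffel computation in the coordinates $\{\phi^1,\dots,\phi^m\}$, which is precisely the computation underlying that theorem's proof.
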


\begin{proof}
Suppose $\nabla$ is flat. Then $\rho_{\nabla,s}=0$
and $\mathcal{H}_{\nabla}\pone=0$ so $\pone\in\mathcal{Q}(M,\nabla)$. 
Let $\{\pone,\phi_1,\dots,\phi_m\}$ be a basis for $\mathcal{Q}(\mathcal{M})$.
Let $\Phi:=\{\phi_1,\dots,\phi_m\}$.
Suppose there exists a point $P\in M$ so that ${\det\{d\Phi(P)\}}=0$.
Then there is a non-trivial dependence relation $a_1d\phi_1(P)+\dots+a_md\phi_m(P)=0$. Let 
$$\phi:=a_0\pone+a_1\phi_1+\dots+a_m\phi_m\text{ for }a_0:=-a_1\phi_1(P)-\dots-a_m\phi_m(P)\,.
$$
Since $\phi(P)=0$ and $d\phi(P)=0$, Theorem~\ref{T1.3} shows that $\phi\equiv0$. This contradicts
the assumption that $\{\pone,\phi_1,\dots,\phi_m\}$ is a basis for $\mathcal{Q}(M,\nabla)$. 
Thus $\det(d\Phi)$ is nowhere vanishing so
$\Phi$ defines a local diffeomorphism from $M$ to $\mathbb{R}^m$. 
Fix a simply connected neighborhood $\mathcal{O}$ of a point $P$ of $M$ and let
 $\{x^1:=\phi^1,\dots,x^m:=\phi^m\}$ be the associated local coordinates on $\mathcal{O}$. Then
$\mathcal{Q}(\mathcal{O},\nabla)=\operatorname{Span}\{\pone,x^1,\dots,x^m\}$. 
By Theorem~\ref{T1.4}, this implies that
$\nabla=\Phi^*(\nabla_0)$, where $\nabla_0$ denotes the flat connection on $\mathbb{R}^m$. This completes the proof of Assertion~(1).

Suppose $(M,\nabla)$ is a strongly projectively flat affine surface with
$$
\mathcal{Q}(M,\nabla)=e^{g(x^1,x^2)}\operatorname{Span}\{f_1(x^1),f_2(x^1),f_3(x^1)\}\,.
$$
We argue for
a contradiction. Since $(M,\nabla)$ is strongly projectively flat, we have $\dim\{\mathcal{Q}(M,\nabla)\}=3$
and the functions $\{f_i\}$ are linearly independent.
Let $\tilde\nabla:={}^{-g}\nabla$;
\begin{eqnarray*}
\mathcal{Q}(M,\tilde\nabla)&=&e^{-g(x^1,x^2)}e^{g(x^1,x^2)}\operatorname{Span}\{f_1(x^1),f_2(x^1),f_3(x^1)\}\\
&=&\operatorname{Span}\{f_1(x^1),f_2(x^1),f_3(x^1)\}\,.
\end{eqnarray*}
Let $f=c_1f_1+c_2f_2+c_3f_3$. Since the $f_i$ do not depend on $x^2$, we may choose $(c_1,c_2,c_3)\ne(0,0,0)$ so that
$f(P)=0$ and $df(P)=0$; thus $f$ vanishes
identically so the functions $\{f_1,f_2,f_3\}$ are not linearly independent which is false.
\end{proof}

\begin{theorem}\label{T2.2}
Let $\mathcal{M}$ be a Type~$\mathcal{A}$ affine surface geometry. Let
$\mathcal{Q}_c:=\mathcal{Q}\otimes_{\mathbb{R}}\mathbb{C}$.
\ \begin{enumerate}
\item There is a basis for $\mathcal{Q}_c$ of functions of the form 
$e^{\alpha_1x^1+\alpha_2x^2}p(x^1,x^2)$ where $p$ is a polynomial of degree at most 2 in $(x^1,x^2)$,
where $(\alpha_1,\alpha_2)\in\mathbb{C}^2$,
and where $e^{\alpha_1x^1+\alpha_2x^2}\in\mathcal{Q}_c$.
\item There exist linear functions $L_i$,
there exists a polynomial $Q$ which is at most quadratic, and there exists a
basis $\mathcal{B}$ for $\mathcal{Q}(\mathcal{M})$ which has one of the following four forms
$$\begin{array}{ll}
\mathcal{B}=\{e^{L_1}\cos(L_2),e^{L_1}\sin(L_2),e^{L_3}\},&
\mathcal{B}=\{e^{L_1},e^{L_2},e^{L_3}\},\\[0.05in]
\mathcal{B}=\{e^{L_1},L_2e^{L_1},e^{L_3}\},&
\mathcal{B}=\{e^{L_1},L_2e^{L_1},Qe^{L_1}\}.
\end{array}$$
\end{enumerate}\end{theorem}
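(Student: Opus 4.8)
The plan is to exploit the fact that a Type~$\mathcal{A}$ geometry has \emph{constant} Christoffel symbols, so that by the computation in \eqref{E1.a} the Ricci tensor is constant as well and, since $m=2$ gives $\frac1{m-1}=1$, the quasi-Einstein equation becomes a linear system of second order partial differential equations with constant coefficients,
$$\partial_{x^i}\partial_{x^j}f-\Gamma_{ij}{}^k\partial_{x^k}f+\rho_{ij}f=0,\qquad 1\le i\le j\le 2.$$
Because the coefficients are constant, differentiating a solution again yields a solution; hence $\partial_{x^1}$ and $\partial_{x^2}$ restrict to commuting linear endomorphisms of the solution space. By Theorem~\ref{T1.2} and Theorem~\ref{T1.3}, $\mathcal{M}$ is strongly projectively flat and $M=\mathbb{R}^2$ is simply connected, so $\dim_{\mathbb{R}}\mathcal{Q}(\mathcal{M})=3$ and therefore $\dim_{\mathbb{C}}\mathcal{Q}_c=3$.

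First I would prove Assertion~(1) by linear algebra on $\mathcal{Q}_c$. The two commuting operators $\partial_{x^1},\partial_{x^2}$ admit a simultaneous primary decomposition $\mathcal{Q}_c=\bigoplus_\lambda V_\lambda$, indexed by common generalized eigenvalues $\lambda=(\alpha_1,\alpha_2)\in\mathbb{C}^2$, on each block of which $\partial_{x^1}-\alpha_1$ and $\partial_{x^2}-\alpha_2$ are nilpotent. Writing $f=e^{\alpha_1x^1+\alpha_2x^2}g$ on $V_\lambda$ converts this nilpotency into the statement that $g$ is annihilated by a power of $\partial_{x^1}$ and by a power of $\partial_{x^2}$, so $g$ is a polynomial, and the space of such $g$ is closed under $\partial_{x^1}$ and $\partial_{x^2}$. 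A short filtration argument, using $\dim V_\lambda\le 3$, then shows that this differentiation-closed polynomial space contains the constants and has total degree at most $2$; in particular the pure exponential $e^{\alpha_1x^1+\alpha_2x^2}$ lies in $\mathcal{Q}_c$. Choosing a basis adapted to this decomposition gives Assertion~(1).

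For Assertion~(2) I would read off the real normal forms from the block structure of the three-dimensional space $\mathcal{Q}_c$. The possible sizes of blocks are governed by the partitions $1+1+1$, $2+1$, and $3$ of $\dim\mathcal{Q}_c=3$: a one-dimensional block is a pure exponential $e^{L}$, a two-dimensional block equals $e^{L_1}\operatorname{Span}\{1,L_2\}$, and a three-dimensional block equals $e^{L_1}\operatorname{Span}\{1,L_2,Q\}$ with $Q$ of degree exactly $2$, the shape of the polynomial factor being forced by the filtration argument above. I would then use that $\mathcal{Q}(\mathcal{M})$ is the real locus, so the decomposition is stable under complex conjugation: a real generalized eigenvalue gives a real block, while a non-real eigenvalue pairs with its conjugate, and splitting $\alpha_1x^1+\alpha_2x^2=L_1+iL_2$ into real and imaginary parts converts such a conjugate pair of one-dimensional blocks into the real pair $\{e^{L_1}\cos L_2,\,e^{L_1}\sin L_2\}$. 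Since $3$ is odd, at most one conjugate pair can occur and every block of size $\ge 2$ must be real; enumerating the resulting combinations yields exactly the four listed forms.

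The hardest part will be the passage from the abstract nilpotent structure on each block to the explicit polynomial normal forms, namely verifying that the differentiation-closed polynomial space attached to a block of dimension $d\le 3$ must equal $\operatorname{Span}\{1\}$, $\operatorname{Span}\{1,L_2\}$, or $\operatorname{Span}\{1,L_2,Q\}$, and in particular that no mixed quadratic such as $x^1x^2$ can arise, since its repeated derivatives already span a four-dimensional differentiation-closed space and so violate $\dim\mathcal{Q}_c=3$. The other delicate point will be keeping track of reality when recombining blocks, so as to be sure that conjugate eigenvalues are grouped correctly and that a genuine block of size two or three cannot carry a non-real eigenvalue inside a three-dimensional total space.
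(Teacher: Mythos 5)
Your proposal follows essentially the same route as the paper's proof: both use that the Type~$\mathcal{A}$ quasi-Einstein operator has constant coefficients, so $\partial_{x^1}$ and $\partial_{x^2}$ act as commuting endomorphisms of the three-dimensional space $\mathcal{Q}_c$, decompose $\mathcal{Q}_c$ into simultaneous generalized eigenspaces (the paper's condition $(\partial_{x^i}-\alpha_i)^3f=0$ is exactly your nilpotency), convert each block into exponential-times-polynomial form, and obtain the real normal forms by pairing conjugate eigenvalues and taking real and imaginary parts, with dimension counting ruling out spurious cases; your explicit block-size enumeration under complex conjugation is a cleaner organization of what the paper compresses into ``the remaining cases are then examined similarly.'' One small correction: in a three-dimensional block the polynomial factor need not have degree exactly $2$ --- for instance $\mathcal{Q}=\operatorname{Span}\{\pone,x^1,x^2\}$ for the flat connection $\Gamma_0^0$, and $e^{x^2}\operatorname{Span}\{\pone,x^2,2x^1\}$ for $\Gamma_4^1(0)$, are differentiation-closed three-dimensional blocks whose polynomial parts are merely linear --- so your filtration argument forces degree \emph{at most} $2$, not exactly $2$. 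This slip is harmless for the final statement, since the fourth listed form only requires $Q$ to be at most quadratic, but as written your enumeration would wrongly exclude these genuinely occurring geometries; simply delete the word ``exactly.''
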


\begin{proof} Since $\mathcal{M}$ is a Type~$\mathcal{A}$ affine surface geometry, the quasi-Einstein operator is
a constant coefficient operator.
Consequently, if $f\in\mathcal{Q}(\mathcal{M})$, then $\partial_{x^i}f\in\mathcal{Q}(\mathcal{M})$. As
$\mathcal{M}$ is strongly projectively flat, $\dim\{\mathcal{Q}(\mathcal{M})\}=3$ by Theorem~\ref{T1.3}.
Decompose $\mathcal{Q}_c=\oplus_{\alpha_1,\alpha_2}\mathcal{Q}_{\alpha_1,\alpha_2}$ into the simultaneous generalized eigenspaces of 
$\partial_{x^1}$ and $\partial_{x^2}$
$$
\mathcal{Q}_{\alpha_1,\alpha_2}:=\{f\in\mathcal{Q}_c:(\partial_{x^1}-\alpha_1)^3f=0\text{ and }(\partial_{x^2}-\alpha_2)^3f=0\}\,.
$$
Let $f(x^1,x^2)=e^{\alpha_1x^1+\alpha_2x^2}\tilde f(x^1,x^2)\in\mathcal{Q}_{\alpha_1,\alpha_2}$. We have
$$
0=(\partial_{x^1}-\alpha_1)^3f=e^{\alpha_1x^1+\alpha_2x^2}\partial_{x^1}^3\tilde f\text{ and }
0=(\partial_{x^2}-\alpha_2)^3f=e^{\alpha_1x^1+\alpha_2x^2}\partial_{x^2}^3\tilde f\,.
$$
Thus $\partial_{x^1}^3\tilde f=0$ and $\partial_{x^2}^3\tilde f=0$. Thus $\tilde f$
is a polynomial of degree at most $2$ and applying $\partial_{x^1}$ and $\partial_{x^2}$
appropriately, we see $e^{\alpha_1x^1+\alpha_2x^2}\in\mathcal{Q}_c$. Assertion~(1) follows.

Suppose $e^{\alpha_1x^1+\alpha_2x^2}\in\mathcal{Q}_c$ where $\alpha_i\in\mathbb{C}-\mathbb{R}$ for some $i$.
Let $L_1=\Re(\alpha_1x^1+\alpha_2x^2)$ and let
$L_2=\Im(\alpha_1x^1+\alpha_2x^2)$. Since the quasi-Einstein equation is real, we may take the real and
imaginary parts to see $e^{L_1}\cos(L_2)\in\mathcal{Q}(\mathcal{M})$ and $e^{L_1}\sin(L_2)\in\mathcal{Q}(\mathcal{M})$.
If $pe^{\alpha_1x^1+\alpha_2x^2}\in\mathcal{Q}_c$ for $p$ a polynomial of degree at least 1, then
$$\{\Re(pe^{\alpha_1x^1+\alpha_2x^2}),\Im(pe^{\alpha_1x^1+\alpha_2x^2}),\Re(e^{\alpha_1x^1+\alpha_2x^2}),\Im(pe^{\alpha_1x^1+\alpha_2x^2})\}$$
are 4 linearly independent elements of $\mathcal{Q}(\mathcal{M})$ which is impossible. If there is an element of the form
$p(x^1,x^2)e^{b_1x^1+b_2x^2}\in\mathcal{Q}(\mathcal{M})$, then for dimensional reasons,
$p$ must have degree 0 and the $b_i$ are real. Consequently,
$\mathcal{B}=\{e^{L_1}\cos(L_2),e^{L_1}\sin(L_2),e^{L_3}\}$. 

We therefore assume all the $\alpha_i$ are real and consequently by Assertion~(1), $\mathcal{Q}(\mathcal{M})$ is spanned
by elements of the form $p(x^1,x^2)e^{L(x^1,x^2)}$ where $L(x^1,x^2)=\alpha_1x^1+\alpha_2x^2$ is a real linear function.
The remaining cases are then
examined similarly.
\end{proof}

We use Theorem~\ref{T2.2} to improve Assertion~(2) of Theorem~\ref{T1.2}. We say that two Type~$\mathcal{A}$
connections $\nabla$ and $\tilde\nabla$ are {\it strongly linearly projectively equivalent} if there exists a linear function
$L$ so that $\tilde\nabla={}^L\nabla$ or equivalently, by Theorems~\ref{T1.3} and \ref{T1.4},
$\mathcal{Q}(\tilde\nabla)=e^L\mathcal{Q}(\nabla)$. If $\Gamma_\nabla=\Gamma(a,b,c,d,e,f)$, then {for $L=a_1 x^1 + a_2 x^2$,}
$$
\Gamma_{{}^L\nabla}=\Gamma(a+2a_1,b,c+a_2,d+a_1,e,f+2a_2)\,.
$$
Consequently, the space of Type~$\mathcal{A}$ connections which are strongly linearly projectively equivalent to $\nabla$
is an affine plane in the parameter space $\mathbb{R}^6$.
\begin{theorem}\label{T2.3}
Every Type~$\mathcal{A}$ affine surface geometry is strongly linearly projectively equivalent to a flat Type~$\mathcal{A}$
affine surface geometry.
\end{theorem}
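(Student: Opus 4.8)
The plan is to characterize flatness of a Type~$\mathcal{A}$ connection by the presence of the constant function $\pone$ in its quasi-Einstein space, and then to produce the required linear factor directly from the explicit list of bases supplied by Theorem~\ref{T2.2}. First I would record the elementary observation that, for a Type~$\mathcal{A}$ connection $\nabla$, flatness is equivalent to $\pone\in\mathcal{Q}(\mathcal{M})$. Since $\pone$ is constant, $\mathcal{H}_\nabla\pone=0$, so $\pone\in\mathcal{Q}(\mathcal{M})$ holds if and only if $\rho_{\nabla,s}=0$. By Theorem~\ref{T1.2} the Ricci tensor of a Type~$\mathcal{A}$ surface is symmetric, so $\rho_{\nabla,s}=0$ forces $\rho_\nabla=0$; and on a surface the full curvature is determined by the Ricci tensor (a direct component count shows the four entries of $\rho_\nabla$ are in bijection with the four entries of the curvature operator $R(\partial_{x^1},\partial_{x^2})$), so $\rho_\nabla=0$ is equivalent to $\nabla$ being flat.

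Next I would combine this characterization with the strong linear projective equivalence $\mathcal{Q}({}^L\nabla)=e^L\mathcal{Q}(\nabla)$ recorded before the statement, together with the fact that the transformation rule $\Gamma_{{}^L\nabla}=\Gamma(a+2a_1,b,c+a_2,d+a_1,e,f+2a_2)$ keeps ${}^L\nabla$ of Type~$\mathcal{A}$. Thus, to flatten $\nabla$ it suffices to exhibit a real linear function $L_0$ with $e^{L_0}\in\mathcal{Q}(\mathcal{M})$: taking $L=-L_0$ gives $\pone=e^{-L_0}e^{L_0}\in e^{-L_0}\mathcal{Q}(\mathcal{M})=\mathcal{Q}({}^{-L_0}\nabla)$, so by the first step ${}^{-L_0}\nabla$ is a flat Type~$\mathcal{A}$ connection, and it is strongly linearly projectively equivalent to $\nabla$ because $L_0$ is linear.

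Finally I would supply such an $L_0$ by inspecting the four possible forms of the basis $\mathcal{B}$ for $\mathcal{Q}(\mathcal{M})$ listed in Theorem~\ref{T2.2}. In each case at least one basis element is a pure exponential $e^L$ with $L$ a real linear function: $e^{L_3}$ in the first and third forms, any $e^{L_i}$ in the second, and $e^{L_1}$ in the fourth. I do not anticipate a serious obstacle; the only point requiring care is that a real exponential is genuinely present even in the oscillatory first form. This is guaranteed because $\dim\{\mathcal{Q}(\mathcal{M})\}=3$ is odd, so in the simultaneous generalized eigenspace decomposition the non-real eigenvalue pairs $(\alpha_1,\alpha_2)\in\mathbb{C}^2$ occur together with their conjugates, leaving at least one real eigenline which contributes the factor $e^{L_3}$. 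With this choice of $L_0$ in hand the argument closes.
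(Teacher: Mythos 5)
Your proposal is correct and follows essentially the same route as the paper: extract a real exponential $e^{L_0}\in\mathcal{Q}(\mathcal{M})$ from the basis list of Theorem~\ref{T2.2}, pass to ${}^{-L_0}\nabla$ so that $\pone$ lies in the quasi-Einstein space, and conclude $\rho_s=0$, hence $\rho=0$ by the symmetry of Theorem~\ref{T1.2}, hence flatness since the curvature of a torsion-free connection on a surface is determined by its Ricci tensor. Your two added justifications (the conjugation/parity argument guaranteeing a real exponential in the oscillatory case, and the explicit component count identifying $\rho$ with $R(\partial_{x^1},\partial_{x^2})$) are sound elaborations of steps the paper leaves implicit.
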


\begin{proof} Let $\Gamma=\Gamma(a,b,c,d,e,f)$ define an affine surface geometry $\mathcal{M}$.
By Theorem~\ref{T2.2}, $\mathcal{Q}(\mathcal{M})$ contains a exponential function $e^L$ where $L$ is real and linear.
Let $\tilde\nabla={}^{-L}\nabla$ define the affine surface geometry $\tilde{\mathcal{M}}$. Since 
$\mathcal{Q}(\tilde{\mathcal{M}})=e^{-L}\mathcal{Q}(\mathcal{M})$, we have $\pone\in\mathcal{Q}(\tilde{\mathcal{M}})$.
This implies $\rho_{\tilde\nabla,s}=0$. By Theorem~\ref{T1.2}, $\rho_{\tilde\nabla}$ is symmetric and thus $\rho_{\tilde\nabla}=0$.
Since we are in dimension 2, this implies $\tilde\nabla$ is flat.
\end{proof}

We say that two connections are {\it linearly equivalent} if there is an element of $\operatorname{GL}(2,\mathbb{R})$
which intertwines them or, equivalently, they differ by a linear change of coordinates.
Note that linear equivalence preserves geodesic completeness.
Linear equivalence was studied in \cite{BGG18} in some detail. If the Ricci tensor has rank $2$, then linear equivalence and
affine equivalence are equivalent notions for Type~$\mathcal{A}$ surfaces; this is not true if the Ricci tensor is degenerate.
For example, not all flat Type~$\mathcal{A}$ connections are linearly equivalent as we shall see in Theorem~\ref{T3.2}.

We examine the possibilities of Theorem~\ref{T2.2} seriatim in what follows. In Section~\ref{S2.1} we suppose
$\mathcal{Q}$ is spanned by 3 distinct real exponentials, in Section~\ref{S2.2}, we suppose $\mathcal{Q}$
contains a complex exponential, and in Section~\ref{S2.3}, we suppose $\mathcal{Q}$ contains a non-trivial
polynomial times an exponential.

\subsection{Type~$\mathcal{A}$ connections with 3 distinct exponentials}\label{S2.1}
We examine the case when $\mathcal{B}=\{e^{L_1},e^{L_2},e^{L_3}\}$ is a basis for $\mathcal{Q}(\mathcal{M})$.
We define the following connections; the computation of $\mathcal{Q}$, $\rho$, and $\det(\rho)$ is then immediate.
\begin{definition}\label{D2.4}\rm 
\ \begin{enumerate}
\item Let $a_1+a_2\ne1$. Set\newline 
$\Gamma_r^2(a_1,a_2):=\frac
{\Gamma(a_1^2+a_2-1,a_1^2-a_1,a_1a_2,a_1a_2,a_2^2-a_2,a_1+a_2^2-1)}{a_1+a_2-1}$. Then\newline
$\mathcal{Q}=\operatorname{Span}\{e^{x^1},e^{x^2},e^{a_1x^1+a_2x^2}\}$, 
$\displaystyle\rho=\frac1{a_1+a_2-1}\left(\begin{array}{cc}a_1^2-a_1&a_1a_2\\a_1a_2&a_2^2-a_2\end{array}\right)$, and
$\det(\rho)=\frac{a_1a_2}{1-a_1-a_2}$. If $a_1a_2\ne0$, then $\operatorname{Rank}\{\rho\}=2$.
\item For $c\ne-1$, set $\Gamma_2^1(c):=\Gamma(-1,0,c,0,0,1+2c)$. Then\newline
$\mathcal{Q}=\operatorname{Span}\{e^{cx^2},e^{(1+c)x^2},e^{cx^2-x^1}\}$ and
$\rho=(c+c^2)dx^2\otimes dx^2$. If $c\ne0$, then
$\operatorname{Rank}\{\rho\}=1$.
\item Set $\Gamma_2^0:=\Gamma(-1,0,0,0,0,1)$.
Then $\mathcal{Q}=\operatorname{Span}\{\pone,e^{x^2},e^{-x^1}\}$ and $\rho=0$.
\end{enumerate}\end{definition}

\begin{theorem}\label{T2.5}
Let $\mathcal{M}=(\mathcal{O},\Gamma)$ be an affine surface where $\mathcal{O}\subset\mathbb{R}^2$ is open.
If there exist distinct linear functions $L_i$ so $\{e^{L_1},e^{L_2},e^{L_3}\}$ is a basis for $\mathcal{Q}({\mathcal{M}})$, then
$\Gamma$ is Type~$\mathcal{A}$ and $\Gamma$
is linearly equivalent to $\Gamma_r^2(a_1,a_2)$ for $a_1+a_2\ne1$ and $a_1a_2\ne0$, to 
$\Gamma_2^1(c)$ for $c\notin\{-1,0\}$, or to $\Gamma_2^0$.\end{theorem}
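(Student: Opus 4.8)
The plan is to first show that $\Gamma$ must be Type~$\mathcal{A}$, and only afterwards to normalize by linear equivalence. Since $m=2$, the quasi-Einstein equation reads $\partial_{x^i}\partial_{x^j}f-\Gamma_{ij}{}^k\partial_{x^k}f+f(\rho_s)_{ij}=0$. Writing $v_a:=dL_a=(\alpha^{(a)}_1,\alpha^{(a)}_2)$ for the constant covector of $L_a$ and substituting $f=e^{L_a}$, the factor $e^{L_a}$ cancels and I obtain for each $a$ and each index pair the pointwise scalar relation $\alpha^{(a)}_i\alpha^{(a)}_j-\Gamma_{ij}{}^k(x)\alpha^{(a)}_k+(\rho_s)_{ij}(x)=0$. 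Fixing $(i,j)\in\{(1,1),(1,2),(2,2)\}$ and letting $a$ range over $1,2,3$ gives a linear system for the three unknowns $\Gamma_{ij}{}^1(x),\Gamma_{ij}{}^2(x),(\rho_s)_{ij}(x)$ whose coefficient matrix $A$ has rows $(\alpha^{(a)}_1,\alpha^{(a)}_2,-1)$. The point is that $A$ is independent both of $(i,j)$ and of $x$, while the right-hand side $(\alpha^{(a)}_i\alpha^{(a)}_j)_a$ is constant. Hence, whenever $\det A\neq0$, the actual Christoffel symbols and $(\rho_s)_{ij}$ are forced to equal the unique constant solution $A^{-1}(\cdots)$, so $\Gamma$ is Type~$\mathcal{A}$.

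Second, I would prove $\det A\neq0$, i.e.\ that $v_1,v_2,v_3$ are affinely independent; $\det A$ vanishes precisely when these points are collinear. Excluding collinearity is the main obstacle. Suppose the $v_a$ lie on a line $\{v_0+tw:t\in\mathbb{R}\}$ with $w\neq0$, so $v_a=v_0+t_aw$ with the $t_a$ distinct. Then $e^{L_a}=e^{\langle v_0,x\rangle}e^{t_a\langle w,x\rangle}$, and passing to linear coordinates in which $\langle w,x\rangle$ is the first coordinate exhibits $\mathcal{Q}(\mathcal{M})$ in the shape $e^{g(x^1,x^2)}\operatorname{Span}\{f_1(x^1),f_2(x^1),f_3(x^1)\}$. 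Since $\dim\mathcal{Q}(\mathcal{M})=3$, Theorem~\ref{T1.3} shows $\mathcal{M}$ is strongly projectively flat (working on a simply connected neighbourhood, as the analysis is local), so Theorem~\ref{T2.1}(2) forbids exactly this form, a contradiction. Thus $A$ is invertible and $\Gamma$ is Type~$\mathcal{A}$.

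Third, with $\Gamma$ now Type~$\mathcal{A}$ (hence strongly projectively flat by Theorem~\ref{T1.2}) and the $v_a$ non-collinear, I would classify by linear equivalence. Non-collinearity forces some pair, say $v_1,v_2$, to be linearly independent; a linear change of coordinates sends $(v_1,v_2)$ to $((1,0),(0,1))$ and $v_3$ to some $(a_1,a_2)$ with $a_1+a_2\neq1$. In these coordinates $\mathcal{Q}=\operatorname{Span}\{e^{x^1},e^{x^2},e^{a_1x^1+a_2x^2}\}=\mathcal{Q}(\Gamma^2_r(a_1,a_2))$, so Theorem~\ref{T1.4} yields $\Gamma=\Gamma^2_r(a_1,a_2)$ and hence linear equivalence of the original connection to $\Gamma^2_r(a_1,a_2)$. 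It remains only to treat the degenerate sub-cases. If $a_1a_2\neq0$ we are in the first listed case. If exactly one of $a_1,a_2$ vanishes, say $a_1=0$ and $a_2\notin\{0,1\}$ (the case $a_2=0$ reducing to this one via the swap $x^1\leftrightarrow x^2$), the lower-triangular map with covector matrix $\left(\begin{smallmatrix}-1&0\\ 1/(a_2-1)&1/(a_2-1)\end{smallmatrix}\right)$ carries the configuration $\{(1,0),(0,1),(0,a_2)\}$ to $\{(-1,c),(0,c),(0,1+c)\}$, the covectors of $\Gamma^1_2(c)$ with $c=1/(a_2-1)$, and one checks $c\notin\{-1,0\}$. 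If $a_1=a_2=0$ then $v_3=0$ and $\mathcal{Q}=\operatorname{Span}\{\pone,e^{x^1},e^{x^2}\}$; the reflection $x^1\mapsto-x^1$ sends this to $\mathcal{Q}(\Gamma^0_2)=\operatorname{Span}\{\pone,e^{x^2},e^{-x^1}\}$. Invoking Theorem~\ref{T1.4} in each sub-case, together with transitivity of linear equivalence, completes the classification.
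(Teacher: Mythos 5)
Your proof is correct, and while its endgame coincides with the paper's, its opening is genuinely different. The paper never solves the quasi-Einstein equation pointwise: it deduces everything from Theorem~\ref{T1.4} (``$\mathcal{Q}$ determines $\nabla$''), excludes the degenerate configurations in two separate steps (the span of $\{dL_1,dL_2,dL_3\}$ must be $2$-dimensional, and then $a_1+a_2\ne1$ via Theorem~\ref{T2.1}(2)), and obtains Type~$\mathcal{A}$ only a posteriori, because the normalized $\mathcal{Q}$ matches that of a model connection. You instead substitute $f=e^{L_a}$ into $\mathcal{H}f+f\rho_s=0$ and observe that, for each fixed $(i,j)$, the values $\bigl(\Gamma_{ij}{}^1(x),\Gamma_{ij}{}^2(x),(\rho_s)_{ij}(x)\bigr)$ solve a linear system with constant matrix $A$ and constant right-hand side, so invertibility of $A$ forces constant Christoffel symbols directly; and the non-vanishing of $\det A$ is exactly non-collinearity of the covectors $dL_a$, which neatly unifies the paper's two degenerate exclusions into a single dichotomy, killed by the same tool (Theorem~\ref{T2.1}(2), after passing to a simply connected neighborhood where $\dim\mathcal{Q}=3$ gives strong projective flatness via Theorem~\ref{T1.3}). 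Your normalization and case analysis ($a_1a_2\ne0$ giving $\Gamma_r^2(a_1,a_2)$; one of $a_1,a_2$ zero giving $\Gamma_2^1(c)$ with $c=1/(a_2-1)\notin\{-1,0\}$, where your explicit covector matrix checks out and replaces the paper's unspecified ``suitable change of coordinates''; $a_1=a_2=0$ giving $\Gamma_2^0$ after $x^1\mapsto-x^1$) agrees with the paper, as does the final invocation of Theorem~\ref{T1.4}. One remark: once non-collinearity is established and coordinates normalized, Theorem~\ref{T1.4} alone already yields $\Gamma=\Gamma_r^2(a_1,a_2)$ with constant symbols, so your first paragraph is logically redundant as written; conversely, it could be promoted to carry the whole identification (solve $A^{-1}b$ explicitly and dispense with Theorem~\ref{T1.4} here), which would make the argument more self-contained than the paper's. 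Either way the proof stands.
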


\begin{proof}
By Theorem~\ref{T1.4}, $\mathcal{Q}=\mathcal{Q}(\mathcal{M})$ determines $\mathcal{M}$.
Since $\dim\{\mathcal{Q}\}=3$,
$\mathcal{M}$ is strongly projectively flat. 
By Theorem~\ref{T1.3}, $\operatorname{Span}\{dL_1,dL_2,dL_3\}$ is 2-dimensional. We assume the notation
is chosen so $dL_1$ and $dL_2$ are linearly independent. Make a linear change of coordinates to ensure that
$x^1=L_1$ and $x^2=L_2$. Because $\dim\{\mathcal{Q}\}=3$,
$\mathcal{Q}=\operatorname{Span}\{e^{x^1},e^{x^2},e^{a_1x^1+a_2x^2}\}$.
If $a_1+a_2=1$, then $a_1-1=-a_2$ and
$\mathcal{Q}=e^{x^1}\operatorname{Span}\{\pone,e^{x^2-x^1},e^{a_2(x^2-x^1)}\}$.
If we make a linear change of coordinates to replace $x^2-x^1$ by $\tilde x^2$, then this
 contradicts Theorem~\ref{T2.1}. This shows that $a_1+a_2\ne1$.
If $\operatorname{Rank}\{\rho \}=2$, then $\Gamma$ is linearly equivalent to $\Gamma_r^2(a_1,a_2)$.
If $\Gamma$ is flat, then we have $a_1=a_2=0$ and after replacing $x^1$ by $-x^1$
we see that $\Gamma$ is linearly equivalent to $\Gamma_2^0$. 
If $\det(\rho)=0$ but $\rho\ne0$, we
have $a_1a_2=0$ but $(a_1,a_2)\ne(0,0)$ and hence $(a_1,a_2)\in\{(a,0),(0,a)\}$ for $a\ne0$. 
Since $a_1+a_2\ne1$, $a\ne1$ so we can make a suitable of change of coordinates to see that $\Gamma$
is linearly equivalent to $\Gamma_2^1(c)$ for some suitably chosen $c$.
\end{proof}

\subsection{Complex exponentials}\label{S2.2} 
We examine the case when we have a basis for $\mathcal{Q}(\mathcal{M})$ of the form
$\mathcal{B}=\{e^{L_1}\cos(L_2),e^{L_1}\sin(L_2),e^{L_3}\}$.
We define the following connections; the computation of $\mathcal{Q}$, $\rho$, and $\det(\rho)$ is then immediate. 
\begin{definition}\label{D2.6}\rm
\ \begin{enumerate}
\item For $b_1\ne1$, set $\Gamma_c^2(b_1,b_2):=\Gamma(1+b_1,0,b_2,1,\frac{1+b_2^2}{b_1-1},0)$. Then\newline
$\mathcal{Q}=\operatorname{Span}\{e^{x^1}\cos(x^2),e^{x^1}\sin(x^2),e^{b_1x^1+b_2x^2}\}$,
$\rho=\displaystyle\left(\begin{array}{cc}b_1&b_2\\b_2&\frac{b_1+b_2^2}{b_1-1}\end{array}\right)$, and\newline
$\det(\rho)=\frac{b_1^2+b_2^2}{b_1-1}$. If $(b_1,b_2)\ne(0,0)$, then $\operatorname{Rank}\{\rho\}=2$.
\item Set $\Gamma_5^1(c):=\Gamma(1,0,0,0,1+c^2,2c)$. Then $\rho=(1+c^2)dx^2\otimes dx^2$,\newline
$\mathcal{Q}=\operatorname{Span}\{e^{cx^2}\cos(x^2),e^{cx^2}\sin(x^2),e^{x^1}\}$, and
   $\operatorname{Rank}\{\rho\}=1$.
\item Set $\Gamma_5^0:=\Gamma(1,0,0,1,-1,0)=\Gamma_c^2(0,0)$. Then\newline
$\mathcal{Q}=\operatorname{Span}\{\pone,e^{x^1}\cos(x^2),e^{x^1}\sin(x^2)\}$, and $\rho=0$.
\end{enumerate}\end{definition}

\begin{theorem}\label{T2.7}
Let $\mathcal{M}=(\mathcal{O},\Gamma)$ be an affine surface where $\mathcal{O}\subset\mathbb{R}^2$ is open. If
$\{e^{L_1}\cos(L_2),e^{L_1}\sin(L_2),e^{L_3}\}$ is a basis for $\mathcal{Q}(\mathcal{M})$
where $\{L_i\}$ are real linear functions, then $\mathcal{M}$ is Type~$\mathcal{A}$, and
$\Gamma$ is linearly equivalent to $\Gamma_c^2(b_1,b_2)$ where {$b_1\neq 1$ and}
 $(b_1,b_2)\ne(0,0)$, to $\Gamma_5^1(c)$ or to 
$\Gamma_5^0$.
\end{theorem}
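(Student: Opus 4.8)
The plan is to follow the template of the proof of Theorem~\ref{T2.5}, replacing the three distinct real exponentials by the conjugate pair of complex exponentials $e^{L_1\pm iL_2}$ together with the real exponential $e^{L_3}$. Since $\{e^{L_1}\cos(L_2),e^{L_1}\sin(L_2),e^{L_3}\}$ is a basis, $\dim\{\mathcal{Q}(\mathcal{M})\}=3$, so $\mathcal{M}$ is strongly projectively flat by Theorem~\ref{T1.3} and $\mathcal{Q}(\mathcal{M})$ determines $\nabla$ by Theorem~\ref{T1.4}. Passing to $\mathcal{Q}_c=\mathcal{Q}\otimes_{\mathbb{R}}\mathbb{C}$, the given basis corresponds to $\{e^{L_1+iL_2},e^{L_1-iL_2},e^{L_3}\}$, so the relevant linear functionals are $dL_1+i\,dL_2$, $dL_1-i\,dL_2$, and $dL_3$. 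I note first that $dL_2\ne0$, since otherwise $L_2$ is constant and $e^{L_1}\cos(L_2),e^{L_1}\sin(L_2)$ are proportional, contradicting linear independence. Additive constants in the $L_i$ only rescale and rotate within the relevant spans, so they may be ignored and we work with the linear parts $dL_i$.

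Next I would determine $\operatorname{Span}_{\mathbb{R}}\{dL_1,dL_2,dL_3\}$. It lies in the two-dimensional space $(\mathbb{R}^2)^*$, and it cannot be one-dimensional: if all $dL_i$ were proportional to a single functional $\omega$, then after the linear change of coordinates with $dx^1=\omega$ every basis element would be a function of $x^1$ alone, so $\mathcal{Q}(\mathcal{M})$ would take the form $e^{0}\operatorname{Span}\{f_1(x^1),f_2(x^1),f_3(x^1)\}$ excluded by Theorem~\ref{T2.1}(2). Hence $\{dL_i\}$ spans $(\mathbb{R}^2)^*$, and I split into two cases according to whether $dL_1$ and $dL_2$ are linearly independent.

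In the generic case, with $dL_1,dL_2$ independent, I make the linear change of coordinates $x^1=L_1$, $x^2=L_2$, so that $L_3=b_1x^1+b_2x^2$ for real $b_1,b_2$, giving $\mathcal{Q}(\mathcal{M})=\operatorname{Span}\{e^{x^1}\cos(x^2),e^{x^1}\sin(x^2),e^{b_1x^1+b_2x^2}\}$. If $b_1=1$ this equals $e^{x^1}\operatorname{Span}\{\cos(x^2),\sin(x^2),e^{b_2x^2}\}$, a form excluded by Theorem~\ref{T2.1}(2) applied with the roles of $x^1$ and $x^2$ interchanged (the proof of that assertion is symmetric in the two coordinates), so $b_1\ne1$; when $(b_1,b_2)=(0,0)$ this is $\mathcal{Q}$ of $\Gamma_5^0$, and otherwise it is $\mathcal{Q}$ of $\Gamma_c^2(b_1,b_2)$. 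In the degenerate case $dL_1=\lambda\,dL_2$ (with $dL_2\ne0$), the spanning property forces $dL_2,dL_3$ to be independent, so I take $x^1=L_2$, $x^2=L_3$ and obtain $\mathcal{Q}(\mathcal{M})=\operatorname{Span}\{e^{\lambda x^1}\cos(x^1),e^{\lambda x^1}\sin(x^1),e^{x^2}\}$, which after relabeling $x^1\leftrightarrow x^2$ is $\mathcal{Q}$ of $\Gamma_5^1(\lambda)$. In each case Theorem~\ref{T1.4} identifies $\nabla$, in the new coordinates, with the connection of the corresponding model; since these models are Type~$\mathcal{A}$ and a linear change of coordinates preserves the constancy of the Christoffel symbols, $\mathcal{M}$ is Type~$\mathcal{A}$ and $\Gamma$ is linearly equivalent to $\Gamma_c^2(b_1,b_2)$ with $b_1\ne1$ and $(b_1,b_2)\ne(0,0)$, to $\Gamma_5^1(c)$, or to $\Gamma_5^0$.

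I expect the main obstacle to be the careful bookkeeping of the degenerate alignments of $dL_1,dL_2,dL_3$, and in particular verifying that the excluded configurations (all $dL_i$ collinear, or $b_1=1$ after normalization) genuinely violate Theorem~\ref{T2.1}(2), including the observation that this assertion may be invoked with the two coordinates interchanged. The computations of $\mathcal{Q}$, $\rho$, and $\det(\rho)$ for the three model connections are immediate from Definition~\ref{D2.6} and need no separate verification.
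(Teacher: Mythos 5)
Your proof is correct and takes essentially the same route as the paper: normalize $L_2$ (and $L_1$ when it is not a multiple of $L_2$) by a linear change of coordinates, split into cases according to whether $dL_1$ and $dL_2$ are independent, exclude the degenerate spans of functions of one variable via Theorem~\ref{T2.1}(2), and identify $\nabla$ with the model connection via Theorem~\ref{T1.4}. In fact your explicit exclusion of $b_1=1$, invoking Theorem~\ref{T2.1}(2) with the roles of the coordinates interchanged, supplies a step that the paper's proof leaves tacit even though the constraint appears in the statement.
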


\begin{proof}
Suppose $\{e^{L_1}\cos(L_2),e^{L_1}\sin(L_2),e^{L_3}\}$ is a basis for $\mathcal{Q}(\mathcal{M})$.
Since $L_2$ is non-trivial, we can make a linear change of coordinates to assume $L_2=x^2$. 
If $L_1$ is not a multiple of $L_2$, change coordinates to assume $L_1=x^1$ and obtain
$\Gamma_c^2(b_1,b_2)$ by setting $L_3=b_1 x^1+b_2 x^2$. If $(b_1,b_2)=(0,0)$, then
$\mathcal{Q}=\operatorname{Span}\{e^{x^1}\cos(x^2),e^{x^1}\sin(x^2),\pone\}$,
$\mathcal{M}$ is flat, and $\Gamma=\Gamma_5^0$.
On the other hand, if $L_1=cx^2$, then we have that
$\mathcal{Q}=e^{cx^2}\operatorname{Span}\{\cos(x^2),\sin(x^2),e^{L_3}\}$. $L_3$ is not independent of $x^1$
by Theorem~\ref{T2.1}. Make a linear change of coordinates to assume $L_3=x^1$
and obtain $\Gamma_5^1(c)$.
\end{proof}

\subsection{Polynomials}\label{S2.3} We assume finally that there is a basis for $\mathcal{Q}$ either of the form
$\mathcal{B}=\{e^{L_1},L_2e^{L_1},e^{L_3}\}$ or
$\mathcal{B}=\{e^{L_1},L_2e^{L_1},Qe^{L_1}\}$.
We define the following connections; the computation of $\mathcal{Q}$ and $\rho$
is then immediate.
\begin{definition}\label{D2.8}\rm 
\ \begin{enumerate}
\item Set
$\Gamma_p^2(a):=\Gamma(2,0,0,1,a,1)$. Then
$\mathcal{Q}=e^{x^1}\operatorname{Span}\{\pone,x^1-ax^2,e^{x^2}\}$, and
$\rho=dx^1\otimes dx^1+adx^2\otimes dx^2$. If $a\ne0$, then $\operatorname{Rank}\{\rho\}=2$.
\item Set
$\Gamma_q^2(\pm1):=\Gamma(2,0,0,1,\pm1,0)$. Then
$\mathcal{Q}=e^{x^1}\operatorname{Span}\{\pone,x^2,2x^1\pm(x^2)^2\}$,
$\rho=dx^1\otimes dx^1\pm dx^2\otimes dx^2$, and $\operatorname{Rank}\{\rho\}=2$.
\item Set $\Gamma_4^1(c):=\Gamma(0,0,1,0,c,2)$. Then
$\mathcal{Q}=e^{x^2}\operatorname{Span}\{\pone,x^2,c(x^2)^2+2x^1\}$ and
$\rho=dx^2\otimes dx^2$.
\item Set $\Gamma_3^1(c):=\Gamma(0,0,c,0,0,1+2c)$. Then
$\mathcal{Q}=\operatorname{Span}\{e^{cx^2},x^1e^{cx^2},e^{(1+c)x^2}\}$, $\rho=(c+c^2)dx^2\otimes dx^2$. If
$c\ne0$ and $c\ne-1$, then $\operatorname{Rank}\{\rho\}=1$.
\item Set $\Gamma_1^1:=\Gamma(-1,0,1,0,0,2)$. Then 
$\mathcal{Q}=\operatorname{Span}\{e^{-x^1+x^2},e^{x^2},x^2e^{x^2}\}$ and
$\rho=dx^2\otimes dx^2$.
\item Set $\Gamma^0_0:=\Gamma(0,0,0,0,0,0)$. Then $\mathcal{Q}=\operatorname{Span}\{\pone,x^1,x^2\}$ and
$\rho=0$.
\item Set $\Gamma^0_1:=\Gamma(1,0,0,1,0,0)$. Then $\mathcal{Q}=\operatorname{Span}\{\pone,e^{x^1},x^2e^{x^1}\}$
and $\rho=0$.
\item Set $\Gamma^0_3:=\Gamma(0,0,0,0,0,1)$. Then $\mathcal{Q}=\operatorname{Span}\{\pone,x^1,e^{x^2}\}$
and $\rho=0$.
\item Set $\Gamma^0_4:=\Gamma(0,0,0,0,1,0)$. Then $\mathcal{Q}=\operatorname{Span}\{\pone,x^2,(x^2)^2+2x^1\}$
and $\rho=0$.
\end{enumerate}\end{definition}

\begin{theorem}\label{T2.9}
Let $\mathcal{M}=(\mathcal{O},\Gamma)$ be an affine surface where $\mathcal{O}\subset\mathbb{R}^2$ is open.
Let $L_i$ be linear functions and let $Q$ be at most quadratic.\begin{enumerate}
\item If $\{e^{L_1},L_2 e^{L_1},e^{L_3}\}$ is a basis
for $\mathcal{Q}(\mathcal{M})$, then $\mathcal{M}$ is Type~$\mathcal{A}$ and
$\Gamma$ is linearly equivalent either to $\Gamma_p^2(a)$ for $a\ne0$ or to $\Gamma_1^1$
or to $\Gamma=\Gamma_3^1(c)$ for $c\ne0$ and $c\ne1$
or to $\Gamma_1^0$ or to $\Gamma_3^0$.
\item If $\{e^{L_1},L_2e^{L_1},Qe^{L_1}\}$ is a basis for $\mathcal{Q}(\mathcal{M})$, then
$\mathcal{M}$ is Type~$\mathcal{A}$ and
$\Gamma$ is linearly equivalent either to $\Gamma_q^2(\pm1)$ or to $\Gamma_4^1(c)$ or to $\Gamma_0^0$
or to $\Gamma_1^0$ or to $\Gamma_4^0$.
\end{enumerate}
\end{theorem}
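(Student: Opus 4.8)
The plan is to reduce everything to a problem of matching $\mathcal{Q}(\mathcal{M})$, up to a linear change of coordinates, against the explicit spaces recorded in Definition~\ref{D2.8}. Since $\dim\{\mathcal{Q}(\mathcal{M})\}=3$, the surface $\mathcal{M}$ is strongly projectively flat by Theorem~\ref{T1.3}, so Theorem~\ref{T1.4} guarantees that $\mathcal{Q}(\mathcal{M})$ determines $\nabla$. Thus once a linear (equivalently, since we are in the Type~$\mathcal{A}$ setting, affine) change of coordinates carries $\mathcal{Q}(\mathcal{M})$ onto the $\mathcal{Q}$ of one of the explicit connections of Definition~\ref{D2.8}, Theorem~\ref{T1.4} forces $\mathcal{M}$ to be linearly equivalent to that connection; in particular $\mathcal{M}$ is Type~$\mathcal{A}$. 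In the language of Theorem~\ref{T2.2}, cases~(1) and~(2) are precisely the two ``polynomial'' basis shapes, and the entire content is to normalize them; throughout we may pass to a simply connected neighborhood so that Theorems~\ref{T1.4} and~\ref{T2.1} apply.

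First I would strip off the repeated exponential. In case~(1) the factor $e^{L_1}$ is common to $e^{L_1}$ and $L_2e^{L_1}$, and in case~(2) it is common to all three basis elements; either way set $\tilde\nabla:={}^{-L_1}\nabla$, so that $\pone\in\mathcal{Q}(\tilde{\mathcal{M}})$ by Theorem~\ref{T1.3}. Exactly as in the proof of Theorem~\ref{T2.3}, this gives $\rho_{\tilde\nabla,s}=0$, and since $\rho_{\tilde\nabla}$ is symmetric by Theorem~\ref{T1.2} and we are in dimension~$2$, the connection $\tilde\nabla$ is flat. One then has $\mathcal{Q}(\tilde{\mathcal{M}})=\operatorname{Span}\{\pone,L_2,e^{L_3-L_1}\}$ in case~(1) and $\operatorname{Span}\{\pone,L_2,Q\}$ in case~(2). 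The function $L_2$ is non-constant (else the basis is dependent), and Theorem~\ref{T2.1}(2) rules out the two non-constant generators depending on a single variable; combined with Theorem~\ref{T2.1}(1) applied to the flat $\tilde\nabla$, this forces the differentials of the two non-constant generators to be everywhere linearly independent. That independence is exactly what lets me normalize $\tilde\nabla$ by an affine change of coordinates: in case~(1) to $\operatorname{Span}\{\pone,x^1,e^{x^2}\}=\mathcal{Q}(\Gamma_3^0)$, and in case~(2), after first checking that $\partial_{x^1}Q$ must be a non-zero constant (so $Q$ carries no $x^1x^2$ or $(x^1)^2$ term), either to $\operatorname{Span}\{\pone,x^1,x^2\}=\mathcal{Q}(\Gamma_0^0)$ when $\deg Q=1$, or to $\operatorname{Span}\{\pone,x^2,(x^2)^2+2x^1\}=\mathcal{Q}(\Gamma_4^0)$ when $\deg Q=2$ (the sign of the quadratic term being removable by $x^1\mapsto-x^1$).

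Next I would reinstate the exponential by writing $\nabla={}^{L_1}\tilde\nabla$ with $L_1=px^1+qx^2$ in the normalized coordinates, and compute the Ricci tensor from the transformation rule for ${}^{L}\nabla$ stated before Theorem~\ref{T2.3} together with the formulas~\eqref{E1.a}. For instance, in case~(1) one gets $\Gamma={}^{L_1}\Gamma_3^0=\Gamma(2p,0,q,p,0,1+2q)$, whence $\rho_{11}=p^2$, $\rho_{12}=pq$, $\rho_{22}=q(q+1)$ and $\det\{\rho\}=p^2q$; the case~(2) models give analogous expressions (for example $\det\{\rho\}=p^3$ from $\Gamma_4^0$). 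Sorting by $\operatorname{rank}\{\rho\}\in\{2,1,0\}$ then produces the coarse list: rank~$2$ gives $\Gamma_p^2(a)$ or, according to the signature of $\rho$, $\Gamma_q^2(\pm1)$; rank~$1$ gives $\Gamma_1^1$, $\Gamma_3^1(c)$, or $\Gamma_4^1(c)$; and rank~$0$ gives the flat representatives $\Gamma_1^0$ or $\Gamma_3^0$ in case~(1) and $\Gamma_0^0$ or $\Gamma_4^0$ in case~(2), the precise one being dictated by the surviving exponential structure.

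The hard part is the final bookkeeping of linear equivalences. The residual symmetry group, namely the subgroup of $\operatorname{GL}(2,\mathbb{R})$ preserving the normalized flat model, varies from case to case: it is all of $\operatorname{GL}(2,\mathbb{R})$ for the affine model $\{\pone,x^1,x^2\}$, but is essentially only a rescaling of the ``polynomial'' direction for the models containing a genuine exponential, since $e^{x^2}$ is rigid. I must use this residual action, \emph{together with} linear equivalences that do not preserve the chosen normalization (for instance interchanging $x^1$ and $x^2$, which is what exhibits the degenerate member $\Gamma_p^2(0)$ as $\Gamma_1^1$), to decide exactly which pairs $(p,q)$ yield linearly equivalent connections. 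The upshot is that the component of $L_1$ transverse to the exponential direction is absorbed and collapses to a single connection, while the component along the exponential direction survives as a genuine modulus, giving the parameters $c$ of $\Gamma_3^1(c)$ and $\Gamma_4^1(c)$; note that for $\Gamma_4^1(c)$ this modulus is invisible to $\rho$, so it must be read off by matching $\mathcal{Q}$ directly rather than from the Ricci tensor. Once each normalized $\mathcal{Q}(\mathcal{M})$ is matched verbatim to one of the spaces in Definition~\ref{D2.8}, Theorem~\ref{T1.4} identifies $\mathcal{M}$ with the corresponding Type~$\mathcal{A}$ connection and completes both assertions.
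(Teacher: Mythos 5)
Your skeleton is the paper's: flatten with $\tilde\nabla={}^{-L_1}\nabla$ so that $\pone\in\mathcal{Q}(\tilde{\mathcal M})$ and $\tilde\nabla$ is flat, normalize the flat model using Theorems~\ref{T1.3}, \ref{T1.4} and \ref{T2.1}, then match $\mathcal{Q}$ against Definition~\ref{D2.8}. Your reorganization of Assertion~(1) is a genuine (and correct) variation: the paper normalizes the exponentials directly, with subcases on whether $L_3$ is independent of, proportional to, or equal to a multiple of $L_1$, whereas you flatten uniformly to $\Gamma_3^0$ and then sort ${}^{L_1}\Gamma_3^0=\Gamma(2p,0,q,p,0,1+2q)$ by the rank of $\rho$. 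I checked this: Equation~(\ref{E1.a}) gives $\rho_{11}=p^2$, $\rho_{12}=pq$, $\rho_{22}=q^2+q$, $\det(\rho)=p^2q$, and the resulting identifications ($p\ne0,q\ne0$ gives $\Gamma_p^2(q)$; $p\ne0,q=0$ gives $\Gamma_1^1$; $p=0$, $q\notin\{0,-1\}$ gives $\Gamma_3^1(q)$; $p=0,q=-1$ gives $\Gamma_1^0$; $p=q=0$ gives $\Gamma_3^0$) do reproduce the list of Assertion~(1); similarly ${}^{L_1}\Gamma_4^0$ and ${}^{L_1}\Gamma_0^0$ reproduce Assertion~(2). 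One small repair: at the flattening step you invoke Theorem~\ref{T1.2} for the symmetry of $\rho_{\tilde\nabla}$, but that theorem is stated only for Type~$\mathcal{A}$ geometries, which is precisely what is not yet known; you should instead note that $\tilde\nabla$ is strongly projectively flat (Theorem~\ref{T1.3}, on a simply connected neighborhood) and use the equivalence quoted in the proof of Theorem~\ref{T1.2} that strong projective flatness forces $\rho$ to be symmetric.

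There is, however, one genuine gap, and it sits at the only step of Assertion~(2) that carries real content: your parenthetical ``after first checking that $\partial_{x^1}Q$ must be a non-zero constant'' is never checked, and it does not follow from the tools you cite. Theorem~\ref{T2.1}(1), applied to the flat $\tilde\nabla$ with $\Phi=(x^2,Q)$, gives only that $\det(d\Phi)=-\partial_{x^1}Q$ is nowhere zero on $\mathcal{O}$, and Theorem~\ref{T2.1}(2) only that $Q$ is not a function of $x^2$ alone; on a general open set $\mathcal{O}$ a nowhere-vanishing affine function need not be constant, so neither statement excludes the $(x^1)^2$ or $x^1x^2$ terms. The paper disposes of these terms by using $\partial_{x^1}Q\in\mathcal{Q}(\tilde{\mathcal M})$ and $\partial_{x^2}Q\in\mathcal{Q}(\tilde{\mathcal M})$, i.e.\ closure of $\mathcal{Q}$ under coordinate derivatives; that closure holds when the quasi-Einstein operator has constant coefficients (the Type~$\mathcal{A}$ case, as in the proof of Theorem~\ref{T2.2}), which is what the theorem is being applied to in Theorem~\ref{T3.2}, but it is not available for the abstract $(\mathcal{O},\Gamma)$ at this stage of your argument. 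The omission is not cosmetic: pulling back the flat connection $\Gamma_0$ by $\Phi(x^1,x^2)=(x^2,x^1x^2)$ on $\mathcal{O}=\{x^2>0\}$ yields a flat connection with $\mathcal{Q}=\operatorname{Span}\{\pone,x^2,x^1x^2\}$, a basis of the shape $\{e^{L_1},L_2e^{L_1},Qe^{L_1}\}$ with $L_1=0$, whose only non-zero Christoffel symbol is $\Gamma_{12}{}^1=(x^2)^{-1}$; no linear change of coordinates makes this constant, so your normalization (and the paper's) cannot go through without importing the derivative-closedness of $\mathcal{Q}$, or restricting to $\mathcal{O}=\mathbb{R}^2$, where a nowhere-vanishing affine function is constant and the $\det(d\Phi)\ne0$ argument does force $\partial_{x^1}Q$ constant. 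You must add one of these inputs explicitly; note that Assertion~(1) is immune to this problem, since spans of the form $e^{L_1}\operatorname{Span}\{\pone,L_2,e^{L_3-L_1}\}$ are automatically closed under $\partial_{x^1}$ and $\partial_{x^2}$.
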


\begin{proof} By Theorem~\ref{T2.1}, $\mathcal{Q}$ determines $\mathcal{M}$.
We prove Assertion~(1) as follows. Suppose
$\mathcal{Q}(\mathcal{M})=\operatorname{Span}\{e^{L_1},e^{L_1}L_2,e^{L_3}\}$. 
If $L_1\ne0$, we can make a change of variables to assume $L_1=x^1$. If $L_1$ and $L_3$ are linearly independent,
we can change coordinates to assume as well $L_3=x^1+x^2$ and consequently
$$
\mathcal{Q}=e^{x^1}\operatorname{Span}\{\pone,a_1x^1+a_2x^2,e^{x^2}\}\,.
$$
It then follows by Theorem~\ref{T1.4}
that $a_1\ne0$ and thus we may assume $a_1=1$ to obtain 
$\mathcal{Q}=e^{x^1}\operatorname{Span}\{\pone,x^1+a_2x^2,e^{x^2}\}$ and 
obtain $\Gamma_p^2(a_2)$. If $a_2=0$, 
we obtain $\mathcal{Q}=\operatorname{Span}\{e^{x^1},x^1e^{x^1},e^{x^1+x^2}\}$. We make a linear change of
coordinates to assume $\mathcal{Q}=\operatorname{Span}\{e^{x^2},x^2e^{x^2},e^{x^2-x^1}\}$ and obtain $\Gamma_1^1$.
Assume next $L_3=aL_1$ for $a\ne1$ so 
$\mathcal{Q}(\mathcal{M})=\operatorname{Span}\{e^{x^1},e^{ax^1},(a_1x^1+a_2x^2)e^{x^1}\}$.
By Theorem~\ref{T2.1}, $a_2\ne0$ so after a suitable linear change of coordinates we obtain
$\mathcal{Q}(\mathcal{M})=\operatorname{Span}\{e^{x^1},e^{ax^1},x^2e^{x^1}\}$.
 We make another linear change of coordinates to assume 
$$
\mathcal{Q}(\mathcal{M})=\operatorname{Span}\{e^{cx^2},x^1e^{cx^2},e^{(1+c)x^2}\}
$$
and 
we obtain $\Gamma=\Gamma_3^1(c)$.
We have $\operatorname{Rank}\{\rho\}=1$ for $c\ne0,-1$.  
If $a=0$, then  $\mathcal{Q}(\mathcal{M})=\operatorname{Span}\{e^{x^1},\pone,x^2 e^{x^1}\}$ and we get $\Gamma_1^0$.
Finally, if $L_1=0$ we make a change of variables to assume $\operatorname{Q}(\mathcal{M})=\operatorname{Span}\{\pone,x^1,e^{x^2}\}$
and we obtain $\Gamma=\Gamma_3^0$. This completes the proof of Assertion~(1).

We now establish Assertion~(2). Let $\mathcal{Q}(\mathcal{M})=e^{L_1}\operatorname{Span}\{\pone,L_2,Q\}$.
Set $\tilde\Gamma={}^{-L_1}\Gamma$ to obtain $\mathcal{Q}(\tilde{\mathcal{M}})=\operatorname{Span}\{\pone,L_2,Q\}$. 
If $Q$ is linear, then
$\mathcal{Q}(\tilde{\mathcal{M}})=\operatorname{Span}\{\pone,L_2,L_3\}$. Since $L_2$ and $L_3$ are linearly independent,
$\mathcal{Q}(\tilde{\mathcal{M}})=\operatorname{Span}\{\pone,x^1,x^2\}$ so $\tilde\Gamma=\Gamma_0^0$.
If $L_1=0$, then $\Gamma=\Gamma_0^0$. If $L_1\ne0$, we may choose
coordinates to assume $L_1=x^2$. We then have
$\Gamma={}^{x^2}\Gamma_0=\Gamma(0,0,1,0,0,2)$ and $\Gamma=\Gamma_4^1(0)$. On the other hand, if $Q$ is quadratic, then
$\mathcal{Q}(\tilde{\mathcal{M}})=\operatorname{Span}\{\pone,L_2,Q\}$. Change coordinates to assume $L_2=x^2$.
Because $\partial_{x^1}Q\in\mathcal{Q}(\tilde{\mathcal{M}})$ is a multiple of $x^2$, $(x^1)^2$ does not appear in $Q$.
Since $\partial_{x^2}Q$ is a multiple of $x^2$, $x^1x^2$ does not appear in $Q$. 
Thus we may assume 
$Q= (x^2)^2+a_1x^1+a_2x^2$.
Subtracting a multiple of $x^2$ permits to assume $a_2=0$ so 
$\mathcal{Q}(\tilde{\mathcal{M}})=\operatorname{Span}\{\pone,x^2,(x^2)^2+a_1 x^1\}$. 
Theorem~\ref{T2.1} ensures $a_1\neq0$, so we rescale $x^1$
to get $\mathcal{Q}(\tilde{\mathcal{M}})=\operatorname{Span}\{\pone,x^2,(x^2)^2+2  x^1\}$
and $\tilde\Gamma=\Gamma_4^0$. If $L_1=0$, then $\Gamma=\Gamma_4^0$.
Finally, we assume $L_1\ne0$ and
$
\mathcal{Q}(\mathcal{M})=e^{b_1x^1+b_2x^2}\operatorname{Span}\{\pone,x^2,(x^2)^2+2x^1\}$.
Suppose $b_1=0$. Set $\tilde x^2:=b_2x^2$ so
$
\mathcal{Q}(\mathcal{M})=e^{\tilde x^2}\operatorname{Span}\{\pone,\tilde x^2,(2x^1+b_2^{-2}(\tilde x^2)^2)\}\,. 
$ 
Setting $c=b_2^{-2}\ne0$ yields $\Gamma=\Gamma_4^1(c)$; we obtained $\Gamma_4^1(0)$ previously.
Suppose $b_1\ne0$. Let $b_1=\pm c^2$ and $\tilde x^2=cx^2$
setting $\tilde x^1=b_1x^1+b_2x^2$. We have
\begin{eqnarray*}
&&\mathcal{Q}(\mathcal{M})=e^{\tilde x^1}\operatorname{Span}\{\pone,x^2,(x^2)^2+2b_1^{-1}(\tilde x^1-b_2x^2)\}\\
&=&e^{\tilde x^1}\operatorname{Span}\{\pone,x^2,b_1(x^2)^2+2\tilde x^1\}
=e^{\tilde x^1}\operatorname{Span}\{\pone,\tilde x^2,\pm(\tilde x^2)^2+2\tilde x^1\}\,.
\end{eqnarray*}
Thus $\Gamma=\Gamma(2,0,0,1,\pm1,0)=\Gamma_q^2(\pm1)$.
\end{proof}

\section{Spaces of Type~$\mathcal{A}$ connections}\label{S3}
In this section, we apply the results of Section~\ref{S2} to
study moduli spaces of Type~$\mathcal{A}$ connections
up to linear equivalence. In Section~\ref{S3.1} we study flat connections,
in Section~\ref{S3.2} we study connections where the Ricci tensor has rank 1, and in Section~\ref{S3.3} we study connections
where the Ricci tensor has rank 2.
\subsection{Flat Type~$\mathcal{A}$ connections}\label{S3.1}
We collect the connections of Definitions~\ref{D2.4}, \ref{D2.6}, and \ref{D2.8} which are flat (i.e. $\rho=0$) for the sake of convenience.
\begin{definition}\label{D3.1}\rm
\ \begin{enumerate}
\item  $\Gamma_0^0:=\Gamma(0,0,0,0,0,0)$ and $\mathcal{Q}=\operatorname{Span}\{\pone,x^1,x^2\}$.
\item  $\Gamma_1^0:=\Gamma(1,0,0,1,0,0)$ and $\mathcal{Q}=\operatorname{Span}\{\pone,e^{x^1},x^2e^{x^1}\}$.
\item  $\Gamma_2^0:=\Gamma(-1, 0,0,0,0,1)$ and $\mathcal{Q}=\operatorname{Span}\{\pone,e^{-x^1},e^{x^2}\}$.
\item  $\Gamma_3^0:=\Gamma(0,0,0,0,0,1)$ and $\mathcal{Q}=\operatorname{Span}\{\pone,x^1,e^{x^2}\}$.
\item  $\Gamma_4^0:=\Gamma(0,0,0,0,1,0)$ and $\mathcal{Q}=\operatorname{Span}\{\pone,x^2,(x^2)^2+2x^1\}$.
\item  $\Gamma_5^0:=\Gamma(1,0,0,1,-1,0)$ and $\mathcal{Q}=\operatorname{Span}\{\pone,e^{x^1}\cos(x^2),e^{x^1}\sin(x^2)\}$.
\end{enumerate}\end{definition}

\begin{theorem}\label{T3.2}
If $\Gamma$ is a flat Type~$\mathcal{A}$ connection, then $\Gamma$ is linearly equivalent to $\Gamma_i^0$ for some $0\le i\le 5$.
Furthermore, $\Gamma_i^0$ is not linearly equivalent to $\Gamma_j^0$ for $i\ne j$.
\end{theorem}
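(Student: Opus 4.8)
The plan is to establish Theorem~\ref{T3.2} in two logically separate halves: a surjectivity (\emph{every flat $\Gamma$ hits the list}) statement and an injectivity (\emph{the list has no redundancy}) statement. For the first half, I would invoke the classification machinery already assembled in Section~\ref{S2}. By Theorem~\ref{T2.2}, any Type~$\mathcal{A}$ geometry admits a basis $\mathcal{B}$ for $\mathcal{Q}(\mathcal{M})$ of one of the four listed shapes, and the flat representatives arising from each shape were catalogued as $\Gamma_i^0$ in Theorems~\ref{T2.5}, \ref{T2.7}, and \ref{T2.9}. So I would simply collate: the three-real-exponential case produces $\Gamma_2^0$ (Theorem~\ref{T2.5}); the complex-exponential case produces $\Gamma_5^0$ (Theorem~\ref{T2.7}); and the polynomial case produces $\Gamma_0^0,\Gamma_1^0,\Gamma_3^0,\Gamma_4^0$ (Theorem~\ref{T2.9}). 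Since every flat connection falls into one of these cases, it is linearly equivalent to some $\Gamma_i^0$. This half is essentially bookkeeping, since the hard classification is already done.

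\textbf{The injectivity half is where the real content lies.} I must show the six normal forms are genuinely distinct under $\operatorname{GL}(2,\mathbb{R})$. The cleanest strategy is to exploit the fact, guaranteed by Theorems~\ref{T1.4} and \ref{T2.1}, that $\mathcal{Q}(\mathcal{M})$ is a complete linear invariant: a linear change of coordinates $A\in\operatorname{GL}(2,\mathbb{R})$ sends $\Gamma_i^0$ to $\Gamma_j^0$ if and only if it carries the solution space $\mathcal{Q}(\Gamma_i^0)$ onto $\mathcal{Q}(\Gamma_j^0)$ by pullback. Thus the question reduces to: can a linear substitution $(x^1,x^2)\mapsto A(x^1,x^2)$ transform one of the six spanning sets in Definition~\ref{D3.1} into another? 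This recasts an a~priori six-dimensional search (over Christoffel symbols) as a question about the \emph{functional type} of the three basis elements, which is a discrete invariant that linear substitutions cannot alter.

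\textbf{The decisive observation} is that each $\mathcal{Q}(\Gamma_i^0)$ carries a structural fingerprint invariant under linear coordinate change: the multiset of ``functional shapes'' of a natural basis—how many constants, polynomials of degree~$1$, polynomials of degree~$2$, real exponentials, polynomial-times-exponential terms, and genuinely complex (trigonometric) exponentials it contains. Concretely, $\Gamma_0^0$ is spanned by polynomials alone (all of degree $\le 1$), $\Gamma_4^0$ requires a genuine degree-$2$ polynomial, $\Gamma_3^0$ mixes a linear function with one real exponential, $\Gamma_2^0$ needs \emph{two distinct} real exponentials, $\Gamma_1^0$ exhibits a polynomial-times-exponential $x^2 e^{x^1}$, and $\Gamma_5^0$ alone contains genuine trigonometric (complex exponential) behavior. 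A linear change of coordinates permutes coordinates and forms linear combinations but cannot, for instance, convert a polynomial solution space into one containing an exponential, nor manufacture complex exponentials from real ones. I would argue that each of these categories is preserved: the top degree of the polynomial part, the number of distinct real exponential rates (up to the scaling and shearing that $A$ allows), and the presence of a complex-conjugate pair of rates are all invariants of the pair $(\mathcal{Q},\operatorname{GL}(2,\mathbb{R}))$.

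\textbf{The main obstacle} I anticipate is making the ``functional shape is a linear invariant'' argument airtight rather than merely plausible, since $A$ may mix the coordinates in ways that superficially change the appearance of each basis function (e.g. $e^{x^1}$ becomes $e^{\alpha x^1 + \beta x^2}$). The right remedy is to work at the level of the operators $\partial_{x^1},\partial_{x^2}$ acting on $\mathcal{Q}_c:=\mathcal{Q}\otimes_{\mathbb{R}}\mathbb{C}$, exactly as in the proof of Theorem~\ref{T2.2}: the joint generalized spectrum $\{(\alpha_1,\alpha_2)\}\subset\mathbb{C}^2$ of $(\partial_{x^1},\partial_{x^2})$ on $\mathcal{Q}_c$ transforms \emph{contravariantly and linearly} under $A$, so the number of distinct eigenvalue-pairs, their reality, and the sizes of the associated generalized eigenspaces (which detect the polynomial jets) are all $\operatorname{GL}(2,\mathbb{R})$-invariants. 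Tabulating these invariants for the six connections separates them completely, and I would verify each pairwise distinction by comparing these discrete data—thereby avoiding any direct manipulation of the six-parameter Christoffel family.
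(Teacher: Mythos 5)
Your proposal is correct and follows essentially the same route as the paper: the paper's proof likewise collates the flat representatives from Theorems~\ref{T2.5}, \ref{T2.7}, and \ref{T2.9} and then disposes of injectivity by noting ``by inspection'' that $\mathcal{Q}(\Gamma_i^0)$ is not linearly equivalent to $\mathcal{Q}(\Gamma_j^0)$ for $i\ne j$, the reduction to $\mathcal{Q}$ being justified by Theorem~\ref{T1.4} exactly as you argue. Your joint-generalized-spectrum fingerprint for $(\partial_{x^1},\partial_{x^2})$ acting on $\mathcal{Q}_c$ is simply a rigorous filling-in of that inspection step (and it does separate all six cases, e.g.\ $\Gamma_1^0$ from $\Gamma_3^0$ via the dimension of the generalized eigenspace at $(0,0)$, and $\Gamma_0^0$ from $\Gamma_4^0$ via the maximal polynomial degree), so there is nothing to correct.
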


\begin{proof} By Theorems~\ref{T1.4}, \ref{T2.2}, \ref{T2.5}, \ref{T2.7}, and \ref{T2.9}, every Type~$\mathcal{A}$ connection is linearly
equivalent to one of the connections given in Definitions~\ref{D2.4}, \ref{D2.6}, or \ref{D2.8}. We have listed the 6 connections
of these definitions where $\rho=0$ and thus if $\Gamma$ is a Type~$\mathcal{A}$ connection which is flat, then
$\Gamma$ is linearly equivalent to one of the $\Gamma_i^0$. By inspection, $\mathcal{Q}(\Gamma_i^0)$ is
not linearly equivalent to $\mathcal{Q}(\Gamma_j^0)$ for $i\ne j$ and thus $\Gamma_i^0$ is not linearly equivalent
to $\Gamma_j^0$ for $i\ne j$.
\end{proof}

We now combine the concepts of strong projective equivalence and linear equivalence. In Theorem~\ref{T2.3},
we showed that every Type~$\mathcal{A}$ affine surface geometry $\mathcal{M}$ is strongly linearly projectively equivalent
to a flat Type~$\mathcal{A}$ affine surface geometry $\tilde{\mathcal{M}}$. The following
result now follows by inspection from the definitions given and from Theorem~\ref{T1.4}; it describes the extent
to which $\tilde{\mathcal{M}}$ is not unique.

\begin{theorem}\label{T3.3}
Let $\Gamma$ be a flat Type~$\mathcal{A}$ connection which is strongly linearly projectively equivalent to $\Gamma_i^0$.
Then one of the following possibilities holds:\begin{enumerate}
\item $\Gamma=\Gamma_i^0$.
\item $i=1$, $\mathcal{Q}(\Gamma)=\operatorname{Span}\{e^{-x^1},\pone,x^2\}$,
and $T(x^1,x^2)=(x^2,-x^1)$ intertwines $\Gamma$
and $\Gamma_3^0$.
\item$i=2$,  $\mathcal{Q}(\Gamma)=\operatorname{Span}\{e^{x^1},\pone,e^{x^2+x^1}\}$, and 
$T(x^1,x^2)=(-x^1,x^1+x^2)$ intertwines $\Gamma$
and $\Gamma_2^0$.
\item $i=2$,  $\mathcal{Q}(\Gamma)=\operatorname{Span}\{e^{-x^2},e^{-x^1-x^2},\pone\}$, and 
$T(x^1,x^2)=(x^2,-x^1-x^2)$ intertwines $\Gamma$
and $\Gamma_2^0$. 
\item $i=3$,  $\mathcal{Q}(\Gamma)=\operatorname{Span}\{e^{-x^2},x^1e^{-x^2},\pone\}$, and
$T(x^1,x^2)=(-x^2,x^1)$ intertwines $\Gamma$
and $\Gamma_1^0$.
\end{enumerate}\end{theorem}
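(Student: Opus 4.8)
The plan is to reduce the entire statement to the single membership condition $e^{-L}\in\mathcal{Q}(\Gamma_i^0)$ and then read off the consequences. First I would observe that since $\Gamma$ is flat we have $\rho_\nabla=0$, so the quasi-Einstein operator reduces to the Hessian and $\pone\in\mathcal{Q}(\Gamma)$. Strong linear projective equivalence to $\Gamma_i^0$ means, by the discussion preceding Theorem~\ref{T2.3}, that $\mathcal{Q}(\Gamma)=e^L\mathcal{Q}(\Gamma_i^0)$ for some linear function $L$; equivalently $e^{-L}\in\mathcal{Q}(\Gamma_i^0)$. Because the perturbation ${}^L\nabla$ depends only on $dL$, I may absorb any additive constant in $L$ without changing $\Gamma$ (it only rescales $\mathcal{Q}(\Gamma)$ as a span), so it suffices to determine the admissible homogeneous linear parts of $L$.

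Next I would enumerate, for each of the six models of Definition~\ref{D3.1}, which pure exponentials $e^{-L}$ of a real linear function actually lie in $\mathcal{Q}(\Gamma_i^0)$. The key point is that exponentials $e^{\ell_1 x^1+\ell_2 x^2}$ with distinct exponent vectors $(\ell_1,\ell_2)$ are linearly independent, and that neither a polynomial times an exponential (as in $\Gamma_0^0,\Gamma_1^0,\Gamma_4^0$) nor a genuine trigonometric term times an exponential (as in $\Gamma_5^0$) can contribute to a pure real exponential. Consequently $e^{-L}$ must be proportional to one of the exponential generators listed for $\mathcal{Q}(\Gamma_i^0)$. For $\Gamma_0^0$, $\Gamma_4^0$, and $\Gamma_5^0$ the only such generator is $\pone$, forcing $L$ constant and hence $\Gamma=\Gamma_i^0$, which is possibility~(1). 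For $\Gamma_1^0$ the generator $e^{x^1}$ gives in addition $L=-x^1$; for $\Gamma_2^0$ the generators $e^{-x^1}$ and $e^{x^2}$ give $L=x^1$ and $L=-x^2$; and for $\Gamma_3^0$ the generator $e^{x^2}$ gives $L=-x^2$.

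For each nontrivial choice of $L$ I would then compute $\mathcal{Q}(\Gamma)=e^L\mathcal{Q}(\Gamma_i^0)$ directly and match it against the spans displayed in possibilities~(2)--(5). For instance, $i=1$ with $L=-x^1$ yields $\mathcal{Q}(\Gamma)=e^{-x^1}\operatorname{Span}\{\pone,e^{x^1},x^2e^{x^1}\}=\operatorname{Span}\{e^{-x^1},\pone,x^2\}$, and the three remaining cases are entirely analogous. It then remains to confirm that the stated linear map $T$ intertwines $\Gamma$ with the indicated model $\Gamma_j^0$. By Theorem~\ref{T1.4}(2) together with the identity $\Xi^*\mathcal{Q}(M,\nabla)=\mathcal{Q}(M,\Xi^*\nabla)$, it suffices to verify the pullback identity $T^*\mathcal{Q}(\Gamma_j^0)=\mathcal{Q}(\Gamma)$, and since $T$ is linear this is a one-line substitution in each basis function; for example, in possibility~(2) the map $T(x^1,x^2)=(x^2,-x^1)$ sends the basis $\{\pone,x^1,e^{x^2}\}$ of $\mathcal{Q}(\Gamma_3^0)$ to $\{\pone,x^2,e^{-x^1}\}$, which spans $\mathcal{Q}(\Gamma)$.

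The one genuinely substantive step is the enumeration in the second paragraph: everything hinges on the assertion that a pure real exponential lying in $\mathcal{Q}(\Gamma_i^0)$ must be proportional to one of the exponential generators. This is where the linear independence of exponentials with distinct exponent vectors does the real work, ruling out any contribution from the $x^2e^{x^1}$ or $e^{x^1}\sin(x^2)$ type terms. Once this is in place the remainder is bookkeeping: computing the finitely many spans $e^L\mathcal{Q}(\Gamma_i^0)$ and checking the intertwiners, all of which is routine by Theorem~\ref{T1.4}.
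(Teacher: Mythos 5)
Your proposal is correct and follows exactly the route the paper intends: the paper states that Theorem~\ref{T3.3} ``follows by inspection from the definitions given and from Theorem~\ref{T1.4}'', and your argument is precisely that inspection made explicit --- flatness gives $\pone\in\mathcal{Q}(\Gamma)$, hence $e^{-L}\in\mathcal{Q}(\Gamma_i^0)$, the admissible linear $L$ are enumerated via linear independence of the generalized exponential generators, and the intertwiners are certified through $T^*\mathcal{Q}(\Gamma_j^0)=\mathcal{Q}(\Gamma)$ and Theorem~\ref{T1.4}(2). All case checks (only $\pone$ for $i=0,4,5$; the extra exponentials for $i=1,2,3$ yielding possibilities (2)--(5)) are accurate, so no gaps remain.
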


\subsection{Type~$\mathcal{A}$ connections where $\boldsymbol{\operatorname{Rank}\{\rho\}=1}$}\label{S3.2}
We collect the connections of Definitions~\ref{D2.4}, \ref{D2.6}, and \ref{D2.8} where $\operatorname{Rank}\{\rho\}=1$
 for the sake of convenience.
 \begin{definition}\label{D3.4}\rm
\ \begin{enumerate}
 \item $\Gamma_1^1:=\Gamma(-1,0,1,0,0,2)$, $\rho=dx^2\otimes dx^2$, and
 $\mathcal{Q}=\operatorname{Span}\{e^{-x^1+x^2},e^{x^2},x^2e^{x^2}\}$.
 \item $\Gamma_2^1(c):=\Gamma(-1,0,c,0,0,1+2c)$ for $c\notin\{0,-1\}$,
$\rho=(c+c^2)dx^2\otimes dx^2$, and
$\mathcal{Q}=\operatorname{Span}\{e^{cx^2},e^{(1+c)x^2},e^{-x^1+cx^2}\}$.
\item $\Gamma_3^1(c):=\Gamma(0,0,c,0,0,1+2c)$ for $c\notin\{0,-1\}$,
$\rho=(c+c^2)dx^2\otimes dx^2$, and
$\mathcal{Q}=\operatorname{Span}\{e^{cx^2},x^1e^{cx^2},e^{(1+c)x^2}\}$.
\item $\Gamma_4^1(c)=\Gamma(0,0,1,0,c,2)$, $\rho=dx^2\otimes dx^2$, and\newline
$\mathcal{Q}=\operatorname{Span}\{e^{x^2},x^2e^{x^2},(2x^1+c(x^2)^2)e^{x^2}\}$ for all $c$.
\item $\Gamma_5^1(c)=\Gamma(1,0,0,0,1+c^2,2c)$, $\rho=(1+c^2)dx^2\otimes dx^2$, and\newline
$\mathcal{Q}=\operatorname{Span}\{e^{c x^2}\cos(x^2),e^{cx^2}\sin(x^2),e^{x^1}\}$.
\end{enumerate}\end{definition}

The following result is now immediate from the discussion we have given. We refer to \cite{BGG18} for a different
proof which uses the Lie algebra of killing vector fields rather than $\mathcal{Q}$; we have chosen a notation which
is in parallel with that used in \cite{BGG18} for the convenience of the reader.

\begin{theorem}
Let $\Gamma$ be a Type~$\mathcal{A}$ connection with $\operatorname{Rank}\{\rho\}=1$.
\begin{enumerate}
\item $\Gamma$
is linearly equivalent to one of the $\Gamma_i^1(\star)$ given above.
\item $\Gamma_i^1(\star)$ is not linearly equivalent to $\Gamma_j^1(\star)$ for $i\ne j$.
\item $\Gamma_2^1(c)$ is linearly equivalent to $\Gamma_2^1(\tilde c)$ if and only if $c= \tilde c$ or $c=-1-\tilde c$. 
\item $\Gamma_3^1(c)$ is not linearly equivalent to $\Gamma_3^1(\tilde c)$ for $c\ne\tilde c$.
\item $\Gamma_4^1(c)$ is linearly equivalent to 
$\Gamma_4^1(\tilde c)$ if and only if $c=\tilde c$ or $c\ne0$ and $\tilde c\ne0$.
\item $\Gamma_5^1(c)$ is not linearly equivalent to $\Gamma_5^1(\tilde c)$ for $c\ne\tilde c$.
\end{enumerate}\end{theorem}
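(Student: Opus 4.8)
The plan is to read everything off the solution space $\mathcal{Q}$ and the natural action of $\operatorname{GL}(2,\mathbb{R})$ on it. Assertion~(1) needs no new work: Theorems~\ref{T2.5}, \ref{T2.7}, and \ref{T2.9} already exhibit every Type~$\mathcal{A}$ connection as linearly equivalent to one of the normal forms of Definitions~\ref{D2.4}, \ref{D2.6}, and \ref{D2.8}, and the five connections listed in Definition~\ref{D3.4} are precisely those normal forms with $\operatorname{Rank}\{\rho\}=1$. For the remaining assertions the decisive reduction is Theorem~\ref{T1.4} together with the identity $\Xi^*\mathcal{Q}(M,\nabla)=\mathcal{Q}(M,\Xi^*\nabla)$: since all these geometries are strongly projectively flat, two Type~$\mathcal{A}$ connections $\Gamma$ and $\tilde\Gamma$ are linearly equivalent if and only if there is a matrix $M\in\operatorname{GL}(2,\mathbb{R})$ so that the pullback $f\mapsto f\circ M$ carries $\mathcal{Q}(\Gamma)$ onto $\mathcal{Q}(\tilde\Gamma)$. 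Thus the whole problem becomes the classification of these $3$-dimensional function spaces under linear changes of variable.

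The engine for the rest is the eigenspace decomposition from the proof of Theorem~\ref{T2.2}: writing $\mathcal{Q}_c=\oplus_{\alpha}\mathcal{Q}_\alpha$ as the joint generalized eigenspaces of $\partial_{x^1}$ and $\partial_{x^2}$, every basis element has the form $p(x)e^{\alpha\cdot x}$ with $\alpha\in\mathbb{C}^2$ a character. A linear substitution $M$ acts on the characters by $\alpha\mapsto M^{\mathsf T}\alpha$ and preserves, for each character, the dimension of $\mathcal{Q}_\alpha$, the polynomial degree, and whether $\alpha$ is real or genuinely complex. These are therefore linear-equivalence invariants, and I would use them for Assertion~(2). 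Genuinely complex characters occur only for $\Gamma_5^1$, separating it from the rest. Among the others, the number of distinct characters is $3$ for $\Gamma_2^1$, $2$ for $\Gamma_1^1$ and $\Gamma_3^1$, and $1$ for $\Gamma_4^1$ (equivalently the maximal polynomial degree is $0$, $1$, and $2$); finally $\Gamma_1^1$ and $\Gamma_3^1$ are told apart because their two character covectors are linearly independent for $\Gamma_1^1$ but parallel for $\Gamma_3^1$.

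The within-family statements (3)--(6) then reduce to small explicit linear-algebra problems: write down the characters of $\Gamma_i^1(c)$ and $\Gamma_i^1(\tilde c)$ with their multiplicities and polynomial data, and determine exactly which $M^{\mathsf T}$ match them. For $\Gamma_2^1(c)$ the two parallel characters $(0,c),(0,1+c)$ must go to the parallel pair and $(-1,c)$ to its counterpart; the two ways of matching the parallel pair yield $c=\tilde c$ (via $M=\operatorname{id}$) and $c=-1-\tilde c$ (via $(x^1,x^2)\mapsto(x^1+x^2,-x^2)$), giving Assertion~(3). For $\Gamma_3^1$ the double character cannot be exchanged with the simple one, so $M^{\mathsf T}$ must fix the common line of characters and one reads off $c=\tilde c$, which is Assertion~(4). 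For $\Gamma_4^1$ the unique character forces $M(x^1,x^2)=(Ax^1+Bx^2,x^2)$ with $A\ne0$, and matching the quadratic generator $2x^1+c(x^2)^2$ leaves the single relation $c=A\tilde c$; hence equivalence holds iff $c=\tilde c$ or both parameters are nonzero, which is Assertion~(5).

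The step I expect to be the crux is the complex family $\Gamma_5^1$, where one must be most careful about which discrete symmetries are realizable. Matching the real character $(1,0)$ is forced since $e^{x^1}$ is the only real exponential present, which pins down one row of $M$; matching the conjugate pair $(0,c\pm i)$ then forces $M^{\mathsf T}(0,1)=\pm(0,1)$, and the minus sign is realized by the reflection $(x^1,x^2)\mapsto(x^1,-x^2)$, which conjugates the characters and sends $\Gamma_5^1(c)$ to $\Gamma_5^1(-c)$. The honest outcome is therefore that $\Gamma_5^1(c)$ and $\Gamma_5^1(\tilde c)$ are linearly equivalent precisely when $c=\pm\tilde c$; they are inequivalent whenever $c\ne\pm\tilde c$, so the precise form of Assertion~(6) requires normalizing the parameter, e.g.\ to $c\ge0$, under which $c=\pm\tilde c$ collapses to $c=\tilde c$. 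Keeping track of exactly these extra reflections and swaps---$c\mapsto-1-c$ in (3), the collapse of all nonzero parameters in (5), and $c\mapsto-c$ in (6)---so as to miss no genuine equivalence and manufacture no spurious one is the only delicate part of the argument.
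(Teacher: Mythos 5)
Your proposal is correct and takes essentially the route the paper intends: the paper gives no detailed argument for this theorem, declaring it immediate from the preceding discussion --- that is, from the normal forms of Theorems~\ref{T2.5}, \ref{T2.7}, and \ref{T2.9} together with the principle (Theorem~\ref{T1.4}, applied with $\Xi$ linear) that two strongly projectively flat connections are linearly equivalent if and only if some element of $\operatorname{GL}(2,\mathbb{R})$ carries one solution space $\mathcal{Q}$ onto the other. The genuinely different proof, via the Lie algebra of affine Killing fields, is the one the paper attributes to \cite{BGG18}. Your character bookkeeping (number of joint eigenvalues of $\partial_{x^1},\partial_{x^2}$, real versus complex, parallel versus independent, generalized-eigenspace dimensions) is precisely the inspection the paper leaves implicit, and your case analyses for Assertions~(3)--(5) are sound: the substitution $(x^1,x^2)\mapsto(x^1+x^2,-x^2)$ realizing $c\sim-1-c$ for $\Gamma_2^1$, the forced matching of the double character for $\Gamma_3^1$, and the rescaling relation for $\Gamma_4^1$ all check out. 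One small quibble: your parenthetical that the maximal polynomial degree for $\Gamma_4^1$ is $2$ fails at $c=0$ (where it is $1$), but your primary invariant, the number of distinct characters, separates $\Gamma_4^1(0)$ from $\Gamma_1^1$ and $\Gamma_3^1$ in any case.

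Your dissent on Assertion~(6) is also well taken, and is not a gap in your argument but a slip in the paper's statement as literally written. Since Definitions~\ref{D2.6} and \ref{D3.4} place no restriction on $c$, the linear reflection $(x^1,x^2)\mapsto(x^1,-x^2)$ carries $\mathcal{Q}(\Gamma_5^1(c))=\operatorname{Span}\{e^{cx^2}\cos(x^2),e^{cx^2}\sin(x^2),e^{x^1}\}$ onto $\mathcal{Q}(\Gamma_5^1(-c))$; equivalently, it transforms $\Gamma(1,0,0,0,1+c^2,2c)$ into $\Gamma(1,0,0,0,1+c^2,-2c)$. Hence $\Gamma_5^1(c)$ and $\Gamma_5^1(-c)$ are linearly equivalent, in agreement with the paper's own invariant $\alpha(\Gamma_5^1(c))=\frac{16c^2}{1+c^2}$ and with its subsequent assertion that $\Gamma_5^1(c)$ is affinely equivalent to $\Gamma_5^1(\tilde c)$ if and only if $c=\pm\tilde c$. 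Assertion~(6) is therefore correct only under a normalization such as $c\ge0$, exactly as you observe; with that convention your proof establishes the theorem in full.
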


All flat connections are locally affine isomorphic. 
Let $\mathcal{M}$ be a Type~$\mathcal{A}$ affine surface geometry with $\operatorname{Rank}\{\rho\}=1$.
Choose $X\in T_PM$ so $\rho(X,X)\ne0$ and set 
$$
\alpha_X(\mathcal{M}):=\nabla\rho(X,X;X)^2\cdot\rho(X,X)^{-3}
\text{ and }\epsilon_X(\mathcal{M}):=\operatorname{Sign}\{\rho(X,X)\}=\pm1\,.
$$
We refer to \cite{BGG18} for the proof of the following result:
\begin{theorem}\label{T3.6}
Let $\mathcal{M}$ be a Type~$\mathcal{A}$ affine structure with $\operatorname{Rank}\{\rho_{\mathcal{M}}\}=1$. Then
$\alpha(M):=\alpha_X(\mathcal{M})$ and $\epsilon(\mathcal{M}):=\epsilon_X(\mathcal{M})$ are independent of the choice of $X$.
If $\tilde{\mathcal{M}}$ is another Type~$\mathcal{A}$ affine structure with $\operatorname{Rank}\{\rho_{\tilde{\mathcal{M}}}\}=1$,
then $\tilde{\mathcal{M}}$ is locally affine isomorphic to $\mathcal{M}$ if and only if $\alpha(\tilde{\mathcal{M}})=\alpha(\mathcal{M})$
and $\epsilon(\tilde{\mathcal{M}})=\epsilon(\mathcal{M})$.
\end{theorem}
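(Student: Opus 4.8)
The plan is to reduce everything to a single point and exploit the fact that for a Type~$\mathcal{A}$ geometry both $\rho$ and $\nabla\rho$ are \emph{constant} tensors, their components being the polynomials in $(a,b,c,d,e,f)$ displayed in the proof of Theorem~\ref{T1.2}. By homogeneity it therefore suffices to work at the origin $P$, and the invariants will automatically be point-independent. I first note that $\alpha_X(\mathcal{M})$ is scale-invariant in $X$: the numerator $\nabla\rho(X,X;X)^2$ and the denominator $\rho(X,X)^3$ are both homogeneous of degree $6$ in $X$, so $\alpha_X$ depends only on the line $[X]$ in $\mathbb{P}(T_PM)$ away from $\ker\rho$.

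\emph{Independence of $X$.} Since $\operatorname{Rank}\{\rho\}=1$, write $\rho=\epsilon\,\omega\otimes\omega$ with $\omega\ne0$ and $\epsilon=\pm1$. Then $\rho(X,X)=\epsilon\,\omega(X)^2$, so $\epsilon_X(\mathcal{M})=\operatorname{Sign}\{\rho(X,X)\}=\epsilon$ for every admissible $X$, giving the first invariant at once. The crux is the structural claim that $\ker\rho\subseteq\ker\nabla\rho$, i.e.\ contracting the kernel direction into any one slot of the totally symmetric tensor $\nabla\rho$ (total symmetry is Theorem~\ref{T1.2}) yields zero; equivalently $\rho$ is recurrent. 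Granting this, a totally symmetric $3$-tensor on a $2$-dimensional space that is annihilated by the line $\ker\rho$ must be a multiple of $\omega^{\otimes3}$, so $\nabla\rho=\lambda\,\omega^{\otimes3}$ for a constant $\lambda$, whence $\alpha_X(\mathcal{M})=\lambda^2\omega(X)^6\cdot(\epsilon\,\omega(X)^2)^{-3}=\epsilon\lambda^2$, independent of $X$. I would establish the structural claim most cleanly by invariance under linear equivalence: the property is preserved by $\operatorname{GL}(2,\mathbb{R})$, and on each normal form of Definition~\ref{D3.4} one has $\rho\propto dx^2\otimes dx^2$ and, by the $\rho_{ij;k}$ formulas, $\nabla\rho\propto(dx^2)^{\otimes3}$, so that $\ker\rho=\langle\partial_{x^1}\rangle\subseteq\ker\nabla\rho$.

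\emph{Completeness of $(\alpha,\epsilon)$.} The forward implication is the easy one: a local affine isomorphism $\Xi$ pulls $\rho$ and $\nabla\rho$ back tensorially via $d\Xi_P$, and since $\alpha$ and $\epsilon$ are built invariantly from these and are point-independent, $\alpha(\mathcal{M})=\alpha(\tilde{\mathcal{M}})$ and $\epsilon(\mathcal{M})=\epsilon(\tilde{\mathcal{M}})$. For the converse I would argue through normal forms. The relation $\alpha=\epsilon\lambda^2$ shows that $(\alpha,\epsilon)$ is equivalent to the data $(\epsilon,|\lambda|)$ with $|\lambda|=\sqrt{|\alpha|}$, and the sign of $\lambda$ is irrelevant because the linear map $x^2\mapsto-x^2$ preserves $\rho$ while flipping $\nabla\rho$. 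So I would reduce each of $\mathcal{M},\tilde{\mathcal{M}}$ to a normal form of Definition~\ref{D3.4}, compute $(\alpha,\epsilon)$ for the five families $\Gamma_1^1,\Gamma_2^1(c),\Gamma_3^1(c),\Gamma_4^1(c),\Gamma_5^1(c)$ as explicit functions of the parameter, and then exhibit, for any two normal forms with coinciding $(\alpha,\epsilon)$, an explicit affine change of coordinates intertwining them; equivalently, by Theorem~\ref{T1.4}, a diffeomorphism $\Xi$ with $\Xi^*\mathcal{Q}(\mathcal{M})=\mathcal{Q}(\tilde{\mathcal{M}})$, read off from the bases in Definition~\ref{D3.4}.

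The main obstacle is this last converse step. Classification up to \emph{linear} equivalence is already in hand, but local affine isomorphism is strictly coarser, so the required maps will in general be genuinely non-linear; for instance one expects $\Gamma_5^1(c)$ and $\Gamma_5^1(-c)$, which are not linearly equivalent, to share $(\alpha,\epsilon)=(16c^2/(1+c^2),+1)$ and hence to be affinely isomorphic. Producing these affine isomorphisms and checking that distinct values of $(\alpha,\epsilon)$ are genuinely non-isomorphic — that the parameter-to-invariant maps organize the five families into a single moduli line carrying both signs of $\epsilon$ only at $\alpha=0$ — is where the real work lies; the point-independence and the recurrence lemma are comparatively routine.
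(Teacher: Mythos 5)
Your first half is correct and complete. Point-independence via homogeneity is sound since $\rho$ and $\nabla\rho$ have constant components for a Type~$\mathcal{A}$ structure; $\epsilon$ is immediate from $\rho=\epsilon\,\omega\otimes\omega$; and your recurrence lemma $\ker\rho\subseteq\ker\nabla\rho$ is legitimately established by checking the normal forms of Definition~\ref{D3.4}, because the classification of rank-one Type~$\mathcal{A}$ connections up to linear equivalence is obtained in Section~\ref{S2} and Section~\ref{S3.2} independently of Theorem~\ref{T3.6}, and the property is $\operatorname{GL}(2,\mathbb{R})$-invariant (one verifies from the $\rho_{ij;k}$ formulas that indeed $\nabla\rho\propto(dx^2)^{\otimes3}$ on each $\Gamma_i^1$). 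From $\nabla\rho=\lambda\,\omega^{\otimes3}$ you correctly get $\alpha=\epsilon\lambda^2$. The forward implication is also fine, and note that it already settles half of what you defer as ``the real work'': that structures with distinct $(\alpha,\epsilon)$ are not affinely isomorphic is exactly the contrapositive of the forward direction, so nothing further is needed there.

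The genuine gap is the remaining half of the converse: you never exhibit the affine isomorphisms between normal forms sharing $(\alpha,\epsilon)$, and that is the substantive content of the theorem. By Equation~(\ref{E3.a}) the required list is finite: within families, $\alpha=16+4/(c^2+c)$ determines $\{c,-1-c\}$ for $\Gamma_2^1,\Gamma_3^1$ and $\alpha=16c^2/(1+c^2)$ determines $c^2$ for $\Gamma_5^1$; across families one must show $\Gamma_3^1(c)\cong\Gamma_2^1(c)$, $\Gamma_4^1(c)\cong\Gamma_1^1$ for every $c$, and $\Gamma_5^1(c)\cong\Gamma_5^1(-c)$. Your own framework closes this quickly via Theorem~\ref{T1.4}\,(2), had you carried it out: the diffeomorphism $\Xi(x^1,x^2)=(e^{-x^1},x^2)$ pulls $\mathcal{Q}(\Gamma_3^1(c))=\operatorname{Span}\{e^{cx^2},x^1e^{cx^2},e^{(1+c)x^2}\}$ back to $\mathcal{Q}(\Gamma_2^1(c))=\operatorname{Span}\{e^{cx^2},e^{-x^1+cx^2},e^{(1+c)x^2}\}$, and $\Xi(x^1,x^2)=(\frac12(e^{-x^1}-c(x^2)^2),x^2)$ pulls $\mathcal{Q}(\Gamma_4^1(c))=e^{x^2}\operatorname{Span}\{\pone,x^2,2x^1+c(x^2)^2\}$ back to $e^{x^2}\operatorname{Span}\{\pone,x^2,e^{-x^1}\}=\mathcal{Q}(\Gamma_1^1)$, whence $\Xi^*\nabla=\tilde\nabla$ locally in each case (the case $\Gamma_4^1(0)$ is covered by taking $c=0$, and the linear identifications $\Gamma_2^1(c)\sim\Gamma_2^1(-1-c)$, $\Gamma_4^1(c)\sim\Gamma_4^1(\tilde c)$ for $c\tilde c\ne0$ are already recorded in Section~\ref{S3.2}). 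Also a factual slip in your motivating example: $\Gamma_5^1(c)$ and $\Gamma_5^1(-c)$ \emph{are} linearly equivalent, since $(x^1,x^2)\mapsto(x^1,-x^2)$ carries $\Gamma(1,0,0,0,1+c^2,2c)$ to $\Gamma(1,0,0,0,1+c^2,-2c)$; the inequivalence assertion for $\Gamma_5^1$ in Section~\ref{S3.2} must be read with $c\ge0$ normalized. So genuinely non-linear maps are needed only for the pairs $\Gamma_2^1\leftrightarrow\Gamma_3^1$ and $\Gamma_1^1\leftrightarrow\Gamma_4^1$.

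For comparison: the paper itself gives no proof of Theorem~\ref{T3.6}, deferring to \cite{BGG18}, where the argument runs through affine Killing vector fields and direct classification. Your $\mathcal{Q}$-based plan, once the maps above are inserted, yields a self-contained proof inside this paper's machinery and is arguably more in its spirit than the cited argument; but as submitted it is an outline of the hard direction rather than a proof.
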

The moduli space is $(-\infty,0]\dot\cup[0,\infty)$ where $0$ appears in 2 different moduli spaces distinguished by $\epsilon$. 
We apply Equation~(\ref{E1.a}) to see:
\begin{equation}\label{E3.a}\begin{array}{llll}
\alpha(\Gamma_1^1)=16,&\epsilon(\Gamma_1^1)=1,\\
\alpha(\Gamma_2^1(c))=\frac{4(1+2c)^2}{c^2+c}\in(-\infty,0]\cup(16,\infty),&\epsilon(\Gamma_2^1(c))=\operatorname{sign}(c^2+c),\\
\alpha(\Gamma_3^1(c))=\frac{4(1+2c)^2}{c^2+c}\in(-\infty,0]\cup(16,\infty),&\epsilon(\Gamma_3^1(c))=\operatorname{sign}(c^2+c),\\
\alpha(\Gamma_4^1(c))=16,&\epsilon(\Gamma_4^1(c))=1,\\
\alpha(\Gamma_5^1(c))=\frac{16c^2}{1+c^2}\in [0,16),&\epsilon(\Gamma_5^1(c))=1.
\end{array}\end{equation}
The following is an immediate consequence of Definition~\ref{D3.4}, Theorem~\ref{T3.6}, and Equation~(\ref{E3.a}). 
\begin{theorem}
	The following are all possible affine equivalences for the connections of Definition~\ref{D3.4}.
\begin{enumerate}
\item   $\Gamma_1^1$ and $\Gamma_4^1(c)$ are affine equivalent to $\Gamma_4^1(\tilde c)$ for any $c$ and $\tilde c$. 
\item  $\Gamma_i^1(c)$ and  $\Gamma_j^1(\tilde c)$, $i,j\in\{2,3\}$ are affine equivalent for $c=\tilde c$ or $c=-1-\tilde c$. 
\item $\Gamma_5^1(c)$ is affine equivalent to $\Gamma_5^1(\tilde c)$ if and only if $c=\pm\tilde c$.
\end{enumerate}\end{theorem}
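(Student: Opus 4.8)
The plan is to establish the three claimed affine equivalences by combining the classification of linear equivalences already recorded with the complete invariant $(\alpha,\epsilon)$ furnished by Theorem~\ref{T3.6}, which characterizes local affine isomorphism for rank-one Type~$\mathcal{A}$ geometries. The governing principle is that two connections in Definition~\ref{D3.4} are affine equivalent precisely when their invariants agree, namely $\alpha(\Gamma)=\alpha(\tilde\Gamma)$ and $\epsilon(\Gamma)=\epsilon(\tilde\Gamma)$, and all these values have been tabulated in Equation~(\ref{E3.a}). Thus the entire argument reduces to reading off and comparing entries of that table; no new differential-geometric computation is needed.

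First I would dispose of Assertion~(1). From Equation~(\ref{E3.a}) we have $\alpha(\Gamma_1^1)=16$, $\epsilon(\Gamma_1^1)=1$, and also $\alpha(\Gamma_4^1(c))=16$, $\epsilon(\Gamma_4^1(c))=1$ for every $c$. Since both invariants coincide for $\Gamma_1^1$ and for every $\Gamma_4^1(c)$, Theorem~\ref{T3.6} immediately yields that $\Gamma_1^1$ and $\Gamma_4^1(c)$ are all affine equivalent to $\Gamma_4^1(\tilde c)$ for arbitrary $c,\tilde c$. Next, for Assertion~(3), the table gives $\alpha(\Gamma_5^1(c))=\frac{16c^2}{1+c^2}$ and $\epsilon(\Gamma_5^1(c))=1$; since $\epsilon$ is constant across this family, affine equivalence is governed purely by equality of $\alpha$, and the map $c\mapsto \frac{16c^2}{1+c^2}$ takes equal values at $c$ and $\tilde c$ if and only if $c^2=\tilde c^2$, i.e.\ $c=\pm\tilde c$. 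This gives the stated iff in Assertion~(3).

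For Assertion~(2), I would compare $\Gamma_i^1(c)$ with $i\in\{2,3\}$, noting from Equation~(\ref{E3.a}) that both $\Gamma_2^1$ and $\Gamma_3^1$ share the identical invariant values $\alpha=\frac{4(1+2c)^2}{c^2+c}$ and $\epsilon=\operatorname{sign}(c^2+c)$. So the pair $(i,c)$ and $(j,\tilde c)$ yield affine-equivalent connections exactly when $\frac{4(1+2c)^2}{c^2+c}=\frac{4(1+2\tilde c)^2}{\tilde c^2+\tilde c}$ together with matching sign of $c^2+c$. The arithmetic here is the only real step: one checks that the function $c\mapsto\frac{4(1+2c)^2}{c^2+c}$ attains a given value (with fixed sign of $c^2+c$) precisely at the two arguments related by $c=\tilde c$ or $c=-1-\tilde c$, which is exactly the symmetry already detected in the linear-equivalence statement for $\Gamma_2^1$. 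This reflects the involution $c\mapsto -1-c$, under which $1+2c\mapsto-(1+2c)$ and $c^2+c$ is invariant, so both $\alpha$ and $\epsilon$ are preserved.

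The main obstacle, such as it is, lies entirely in Assertion~(2): verifying that the level sets of $\alpha$ (restricted to a fixed sign of $c^2+c$) consist of exactly the orbit $\{c,-1-c\}$, with no spurious additional solutions arising from the cross-family comparison between $i=2$ and $i=3$. I would confirm this by setting $u=c^2+c$ (so $(1+2c)^2=1+4u$) and observing that $\alpha=\frac{4(1+4u)}{u}=\frac{4}{u}+16$ is a strictly monotone function of $u$ on each of the intervals $u>0$ and $u<0$; hence $\alpha$ determines $u$ uniquely within each sign regime, and $u=c^2+c$ then pins $c$ down to the two values $\{c,-1-c\}$ solving the quadratic. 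This monotonicity argument is routine and replaces any need to manipulate the quartic equation directly, completing the proof.
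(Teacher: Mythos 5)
Your proposal is correct and takes essentially the same route as the paper, which presents this theorem as an immediate consequence of Definition~\ref{D3.4}, Theorem~\ref{T3.6}, and the table of invariants $(\alpha,\epsilon)$ in Equation~(\ref{E3.a}), precisely the comparison you carry out. Your substitution $u=c^2+c$ with $\alpha=16+4/u$ simply makes explicit the arithmetic the paper leaves to the reader.
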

\subsection{Type~$\mathcal{A}$ connections where $\boldsymbol{\operatorname{Rank}\{\rho\}=2}$}\label{S3.3}
In the context of Type~$\mathcal{A}$ surface geometries with non-degenerate Ricci tensor, linear equivalence and affine equivalence are
the same concept. This vastly simplifies the analysis. 

\begin{theorem}\label{T3.8}
Let $\mathcal{M}$ and $\tilde{\mathcal{M}}$ be Type~$\mathcal{A}$ surface geometries such that
$\rho$ and $\tilde\rho$ are non-degenerate. Then $\mathcal{M}$ is linearly equivalent to $\tilde{\mathcal{M}}$ if
and only if $\mathcal{M}$ is affinely equivalent to $\tilde{\mathcal{M}}$.\end{theorem}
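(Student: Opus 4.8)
The plan is to prove the non-trivial implication, that affine equivalence forces linear equivalence; the converse is immediate, since a linear isomorphism is in particular a connection-preserving diffeomorphism and hence an affine equivalence. So I would assume $\Phi\colon\mathbb{R}^2\to\mathbb{R}^2$ is a diffeomorphism with $\Phi^*\tilde\nabla=\nabla$ and aim to replace it by a linear map.

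The key observation I would exploit is that, because $\mathcal{M}$ is Type~$\mathcal{A}$, the Christoffel symbols are constant, so by Equation~(\ref{E1.a}) the Ricci tensor $\rho$ has constant entries; by Theorem~\ref{T1.2} it is symmetric, and by hypothesis it is non-degenerate. I would therefore regard $\rho$ as a (flat) pseudo-Riemannian metric: since its coefficients are constant in the Type~$\mathcal{A}$ coordinates, all its Levi-Civita Christoffel symbols vanish, so its Levi-Civita connection is exactly the flat connection $\Gamma_0$. The same holds for $\tilde\rho$ on the target copy of $\mathbb{R}^2$.

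The next step is to transport this structure across $\Phi$. Because the Ricci tensor is natural under connection-preserving diffeomorphisms, $\Phi^*\tilde\rho=\rho$, so $\Phi$ is an isometry between the two flat metrics. Naturality of the Levi-Civita connection then gives $\Phi^*\nabla_0=\nabla_0$, where $\nabla_0$ denotes the flat connection with Christoffel symbols $\Gamma_0$; feeding vanishing Christoffel symbols on both sides into the connection transformation rule shows that the Hessian of each component of $\Phi$ vanishes, whence $\Phi(x)=Ax+b$ is affine with $A\in\operatorname{GL}(2,\mathbb{R})$. Finally I would remove the translation: writing $\Phi=\tau_b\circ A$ with $\tau_b$ translation by $b$, and using that translations preserve any Type~$\mathcal{A}$ connection (its Christoffel symbols are translation-invariant), we get $\tau_b^*\tilde\nabla=\tilde\nabla$ and hence $A^*\tilde\nabla=\Phi^*\tilde\nabla=\nabla$. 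Thus the linear map $A$ intertwines the two connections, and $\mathcal{M}$ is linearly equivalent to $\tilde{\mathcal{M}}$.

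I expect the step requiring the most care to be the claim that a non-degenerate Ricci tensor with constant coefficients is a flat metric whose Levi-Civita connection is $\Gamma_0$, together with the two naturality statements $\Phi^*\tilde\rho=\rho$ and $\Phi^*\tilde\nabla^{LC}=\nabla^{LC}$; once these are secured, the rigidity of isometries of flat space does all the work. A secondary subtlety is making sure the argument uses only that $\Phi$ is a connection-preserving diffeomorphism, so that it applies to the most general notion of affine equivalence rather than presupposing that $\Phi$ is affine. Non-degeneracy of $\rho$ is used precisely here: it is what lets $\rho$ serve as a metric, and the whole argument collapses when $\operatorname{Rank}\{\rho\}<2$, consistent with the flat and rank-one cases treated separately above.
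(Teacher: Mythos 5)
Your proposal is correct, and it proves the theorem by a genuinely different mechanism than the paper. The paper's proof computes the Lie algebra $\mathfrak{K}(\mathcal{M})$ of affine Killing vector fields: after a linear normalization $\rho=\varepsilon_1dx^1\otimes dx^1+\varepsilon_2dx^2\otimes dx^2$, the condition $\mathcal{L}_X\rho=0$ forces $X=(b^1+cx^2)\partial_{x^1}\pm(b^2+cx^1)\partial_{x^2}$, and the affine Killing equations show $c\ne0$ would force $\Gamma=0$, contradicting non-degeneracy; hence $\mathfrak{K}(\mathcal{M})=\operatorname{Span}\{\partial_{x^1},\partial_{x^2}\}$, and since an affine diffeomorphism $T$ with $T(0)=0$ satisfies $T_*\mathfrak{K}(\mathcal{M})=\mathfrak{K}(\mathcal{M})$ with $T_*\partial_{x^i}=a_i{}^j\partial_{x^j}$ constant, $T$ is linear. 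You instead promote $\rho$ to an auxiliary flat pseudo-Riemannian metric: since the Christoffel symbols are constant, Equation~(\ref{E1.a}) makes $\rho$ constant-coefficient, symmetric by Theorem~\ref{T1.2} and non-degenerate by hypothesis, so its Levi-Civita connection (the Koszul formula needs only non-degeneracy, so the negative definite and indefinite cases cause no trouble) is the standard flat connection $\nabla_0$; naturality of the Ricci tensor gives $\Phi^*\tilde\rho=\rho$, naturality of Levi-Civita gives $\Phi^*\nabla_0=\nabla_0$, the connection transformation law then annihilates the second derivatives of $\Phi$, so $\Phi$ is an affine map of $\mathbb{R}^2$, and stripping the translation (harmless, as Type~$\mathcal{A}$ Christoffel symbols are translation-invariant) leaves a linear intertwiner. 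Both arguments use non-degeneracy at the decisive point -- you to make $\rho$ a metric, the paper to normalize $\rho$ and derive the contradiction $\Gamma=0$. What each buys: your route is shorter and more conceptual, replacing the explicit solution of the Killing system by two naturality statements and the rigidity of $\Phi^*\nabla_0=\nabla_0$; it also localizes immediately (a local solution of $\operatorname{Hess}\Phi=0$ on a connected set is affine), matching the paper's allowance for local diffeomorphisms. The paper's route costs a computation but yields the extra structural fact $\mathfrak{K}(\mathcal{M})=\operatorname{Span}\{\partial_{x^1},\partial_{x^2}\}$, which is of independent interest and ties this theorem to the Killing-field methods of \cite{BGG18}.
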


\begin{remark}\rm
Theorem~\ref{T3.8} fails if the Ricci tensor is permitted to be degenerate. For example, Theorem~\ref{T3.2} gives
Type~$\mathcal{A}$ connections which are flat (and hence affinely equivalent) but not linearly equivalent.
It also follows that the structures $\Gamma_2^1(c)$
and $\Gamma_3^1(c)$ are affinely equivalent but not linearly equivalent.
\end{remark}

\begin{proof} Although this follows from work of \cite{BGG18}, we give a slightly different derivation to keep our
present treatment as self-contained as possible. It is immediate that linear equivalence implies affine equivalence.
Conversely, suppose $\nabla_1$ and $\nabla_2$ are two Type~$\mathcal{A}$ connections on $\mathbb{R}^2$.
Let $T$ be a (local) diffeomorphism of $\mathbb{R}^2$ intertwining the two connections. We must show $T$
is linear; the translations play no role.

If $\mathcal{M}$ is a Type~$\mathcal{A}$ affine surface geometry, let $\mathfrak{K}(\mathcal{M})$ be the Lie algebra
of affine Killing vector fields. If $X=a^1\partial_{x^1}+a^2\partial_{x^2}\in\mathfrak{K}(\mathcal{M})$,
let $\mathcal{L}_X$ be the associated Lie derivative. We have by naturality that $\mathcal{L}_X(\rho_{\mathcal{M}})=0$.
Make a linear change of coordinates to ensure
 $\rho=\varepsilon_1dx^1\otimes dx^1+\varepsilon_2dx^2\otimes dx^2$ where $\varepsilon_i^2=1$.
We compute:
\begin{eqnarray*}
0&=&\mathcal{L}_X(\rho_{\mathcal{M}})(Y,Y)=X\rho_{\mathcal{M}}(Y,Y)
-2\rho_{\mathcal{M}}(\mathcal{L}_XY,Y)\\
&=&X\rho_{\mathcal{M}}(Y,Y)
-2\rho_{\mathcal{M}}([X,Y],Y)\,.
\end{eqnarray*}
If we take $Y=\partial_{x^1}$, we obtain $0=-2\rho_{\mathcal{M}}([X,\partial_{x^1}],\partial_{x^1})=\pm2\partial_{x^1}a^1$.
Consequently $\partial_{x^1}a^1=0$ and similarly $\partial_{x^2}a^2=0$. Thus $X=a^1(x^2)\partial_{x^1}+a^2(x^1)\partial_{x^2}$.
If we take $Y=\partial_{x^1}+\partial_{x^2}$ and argue similarly, we obtain $\partial_{x^2}a^1\pm\partial_{x^1}a^2=0$.
Thus $X=(b^1+cx^2)\partial_{x^1}\pm(b^2+cx^1)\partial_{x^2}$. We suppose $c\ne0$ and argue for a contradiction.
Because $\partial_{x^1}$ and $\partial_{x^2}$ are Killing vector fields, we may suppose without loss of generality that
$X=x^2\partial_{x^1}+\varepsilon x^1\partial_{x^2}$ is a Killing vector field where $\varepsilon=\pm1$.
The affine Killing equations $\mathcal{L}_X\nabla=0$
become $[X,\nabla_YZ]-\nabla_Y[X,Z]-\nabla_{[X,Y]}Z=0$ for all $Y,Z\in C^\infty(TM)$. Letting $Y$ and $Z$ be coordinate vector
fields yields
$$\begin{array}{llll}
-\Gamma_{11}{}^2+2 \Gamma_{12}{}^1\varepsilon=0,&
-\Gamma_{11}{}^1\varepsilon+2\Gamma_{12}{}^2\varepsilon=0,\\
\Gamma_{11}{}^1-\Gamma_{12}{}^2+\Gamma_{22}{}^1\varepsilon=0,&
\Gamma_{11}{}^2-\Gamma_{12}{}^1\varepsilon+\Gamma_{22}{}^2\varepsilon=0,\\
2\Gamma_{12}{}^1-\Gamma_{22}{}^2=0,&
2\Gamma_{12}{}^2-\Gamma_{22}{}^1\varepsilon=0.
\end{array}$$
We solve these equations to see $\Gamma=0$ which is impossible since $\rho$ was assumed non-degenerate.
We conclude therefore $\mathfrak{K}(\mathcal{M})=\operatorname{Span}\{\partial_{x^1},\partial_{x^2}\}$. Suppose
$T$ is an affine diffeomorphism. Since the translations are Type~$\mathcal{A}$ affine diffeomorphisms,
we may assume without loss of generality that $T(0)=0$.
We have $T_*\mathfrak{K}(\mathcal{M})=\mathfrak{K}(\mathcal{M})$. Since
$T_*(\partial_{x^i})=a_i^j\partial_{x^j}$, we have $T$ is linear.
\end{proof}

\begin{definition}\rm Let
$\rho_{v,ij}:=\Gamma_{ik}{}^\ell\Gamma_{j\ell}{}^k$, let
$\psi:=\operatorname{Tr}_\rho\{\rho_v\}=\rho^{ij}\rho_{v,ij}$, and let
$\Psi:=\det(\rho_v)/\det(\rho)$.
\end{definition}
It is clear that
$\psi$ and $\Psi$ are invariant under linear equivalence. Consequently by Theorem~\ref{T3.8}, $\psi$ and $\Psi$ are affine invariants in the
context of  Type~$\mathcal{A}$ geometries where $\rho$ is non-singular.
We refer to \cite{BGG16} for the proof of the following result.

\begin{theorem}
Let $\Gamma$ and $\tilde\Gamma$ be two Type~$\mathcal{A}$ connections such that $\rho_\Gamma$ and $\rho_{\tilde\Gamma}$ are
non-degenerate and have the same signature. 
Then $\Gamma$ and $\tilde\Gamma$ are affine equivalent if and only if $(\psi,\Psi)(\Gamma)=(\psi,\Psi)(\tilde\Gamma)$.
\end{theorem}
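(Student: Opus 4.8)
The plan is to dispatch the easy direction and then reduce the converse to a normalization argument. The forward implication is immediate: $\psi$ and $\Psi$ were observed above to be invariant under linear equivalence, and since $\rho_\Gamma$ is non-degenerate, Theorem~\ref{T3.8} identifies linear with affine equivalence, so $\psi$ and $\Psi$ are genuine affine invariants. Hence affinely equivalent $\Gamma$ and $\tilde\Gamma$ satisfy $(\psi,\Psi)(\Gamma)=(\psi,\Psi)(\tilde\Gamma)$. For the converse, Theorem~\ref{T3.8} again lets me replace affine equivalence by linear equivalence, so it suffices to exhibit an element of $\operatorname{GL}(2,\mathbb{R})$ carrying $\Gamma$ to $\tilde\Gamma$ once the signature and the pair $(\psi,\Psi)$ agree.

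First I would normalize the Ricci tensor. Because $\rho_\Gamma$ and $\rho_{\tilde\Gamma}$ are non-degenerate with the same signature, after separate linear changes of coordinates I may assume $\rho_\Gamma=\rho_{\tilde\Gamma}=\varepsilon_1\,dx^1\otimes dx^1+\varepsilon_2\,dx^2\otimes dx^2$ with $\varepsilon_i=\pm1$ determined by the common signature. The residual freedom is then the isometry group $H:=O(\varepsilon_1,\varepsilon_2)\subset\operatorname{GL}(2,\mathbb{R})$, which is one-dimensional. I next reinterpret the invariants: writing $A$ for the endomorphism $A^i{}_j=\rho^{ik}\rho_{v,kj}$, the symmetry of $\rho_v$ gives $\psi=\operatorname{tr}(A)$ and $\Psi=\det(A)$, so $(\psi,\Psi)$ is exactly the characteristic polynomial of $A$. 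Since $\rho_v$ transforms tensorially and $\rho$ is now fixed, $H$ acts on $A$ by conjugation; equality of characteristic polynomials then forces the normalized $A$ and $\tilde A$ to be $H$-conjugate whenever $A$ is regular. Applying such an element of $H$ to $\tilde\Gamma$, I may assume $A=\tilde A$ as well.

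The crux is then to show that a Type~$\mathcal{A}$ connection with non-degenerate Ricci tensor is determined by the pair $(\rho,\rho_v)$. Using the explicit formulas~(\ref{E1.a}) for the components of $\rho$ together with the defining relations $\rho_{v,ij}=\Gamma_{ik}{}^\ell\Gamma_{j\ell}{}^k$, this is a system of six polynomial equations in the six Christoffel parameters $(a,b,c,d,e,f)$. I expect this to be the main obstacle: one must solve the nonlinear system and verify that its (necessarily finite) solution set collapses, after absorbing the remaining discrete symmetries inside $H$, to a single linear-equivalence class. The dimension count is reassuring, since the classification of Section~\ref{S2} exhibits the rank-two connections as finitely many two-parameter families, matching the two invariants $(\psi,\Psi)$; but converting this heuristic into genuine injectivity requires carrying out the explicit elimination.

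An alternative route, more in keeping with the preceding sections, bypasses the abstract argument and computes $(\psi,\Psi)$ directly on the normal forms $\Gamma_r^2(a_1,a_2)$, $\Gamma_c^2(b_1,b_2)$, $\Gamma_p^2(a)$, and $\Gamma_q^2(\pm1)$ produced by Theorems~\ref{T2.5}, \ref{T2.7}, and \ref{T2.9}. For each family one records the signature of $\rho$, evaluates $\psi$ and $\Psi$ as explicit functions of the family parameters, and checks that fixing the signature together with the value of $(\psi,\Psi)$ recovers the parameters up to the linear equivalences already identified. This replaces the single six-variable elimination by several smaller, entirely mechanical computations, one per family, and makes the verification that $(\psi,\Psi)$ separates orbits routine.
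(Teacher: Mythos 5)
You should first note that the paper does not prove this theorem at all: it states ``We refer to \cite{BGG16} for the proof of the following result,'' so there is no in-paper proof to match; the closest in-paper material is the moduli-space discussion of Section~\ref{S3.3}. Your forward direction is fine and is exactly the paper's remark ($\psi,\Psi$ are visibly linear invariants, and Theorem~\ref{T3.8} upgrades them to affine invariants). The converse, however, is not proven in either of your routes. In the first route there are two genuine gaps. First, the step ``equality of characteristic polynomials forces the normalized $A$ and $\tilde A$ to be $H$-conjugate'' fails when $\rho$ is indefinite: a $\rho$-self-adjoint endomorphism of a Lorentzian plane with a repeated real eigenvalue can be either semisimple or a single Jordan block, and these share $(\psi,\Psi)=(\operatorname{tr}A,\det A)$ without being $O(1,1)$-conjugate; your parenthetical ``whenever $A$ is regular'' concedes this but never handles the non-regular locus, where the theorem still makes an assertion. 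Second, your ``crux'' --- that a Type~$\mathcal{A}$ connection with non-degenerate Ricci tensor is determined by the pair $(\rho,\rho_v)$ up to the residual $H$-action --- is essentially the theorem itself in disguise, and you explicitly leave it undone (``requires carrying out the explicit elimination''). A proof that defers its central claim to an unexecuted six-variable elimination is a plan, not a proof; it is not even clear a priori that the claim is true, since $\rho_v$ is a quadratic contraction of $\Gamma$ and nothing you say rules out distinct $H$-orbits of solutions.

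Your alternative route is the right strategy and is in fact how the cited source \cite{BGG16} and the paper's own Section~\ref{S3.3} organize the material: reduce via Theorems~\ref{T2.2}, \ref{T2.5}, \ref{T2.7}, \ref{T2.9} to the normal forms $\Gamma_r^2(a_1,a_2)$, $\Gamma_c^2(b_1,b_2)$, $\Gamma_p^2(a)$, $\Gamma_q^2(\pm1)$, on which the paper records $(\psi,\Psi)$ explicitly. But here too you stop exactly where the work begins: calling the separation of orbits ``routine'' hides the substantive verifications, namely (i) that $(\psi,\Psi)$ restricted to the two-parameter family $\Gamma_r^2$ with fixed signature is injective precisely modulo the $s_3$-action tabulated in Case~1, (ii) the analogous statement for $\Gamma_c^2$ modulo $b_2\mapsto-b_2$, and (iii) the cross-family comparisons, which are delicate exactly along the exceptional rays $(7,10)\pm t(1,4)$, $t\ge0$, where the polynomial families $\Gamma_p^2$, $\Gamma_q^2$ sit and where one must check they are not also realized by $\Gamma_r^2$ or $\Gamma_c^2$ with the same signature. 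The paper's Figures~\ref{Fig-3}--\ref{Fig-7} illustrate these facts but do not prove them; injectivity of an explicit rational map modulo a finite group action is a concrete computation that your proposal names but never performs. So the proposal should be graded as having a genuine gap: both routes reduce the theorem to an injectivity statement that is asserted rather than established, and route one additionally contains a step (orthogonal conjugacy from the characteristic polynomial in indefinite signature) that is false as stated.
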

We show the image of $(\psi,\Psi)$ below in Figure~\ref{Fig-1}; the region on the far right is the moduli space for
positive definite Ricci tensor, the central region is the moduli space for indefinite Ricci tensor,
and the region on the left the moduli space for negative definite Ricci tensor. The left boundary curve between negative definite
and indefinite Ricci tensors is $\sigma_\ell$ (given in red) and the right boundary curve between indefinite and positive definite
Ricci tensors is $\sigma_r$ (given in blue) where
\begin{eqnarray*}
&&\sigma_\ell(t):=(-4 t^2-{t^{-2}}+2,4 t^4-4 t^2+2),\\
&&\sigma_r(t):=(4 t^2+t^{-2}+2,4 t^4+4 t^2+2)\,.
\end{eqnarray*}
\vglue -.3cm\begin{figure}[H]
\caption{Moduli spaces of Type~$\mathcal{A}$ surfaces with $\det(\rho)\ne0$.}\label{Fig-1}
\vglue -.3cm\includegraphics[height=3.5cm,keepaspectratio=true]{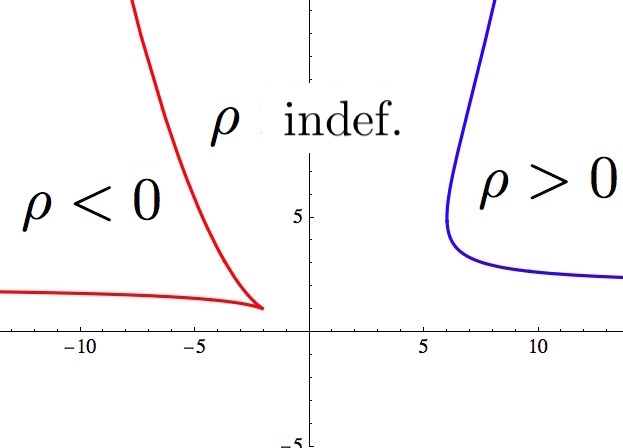}
\end{figure}
\vglue -.3cm\noindent Note that although $(\psi,\Psi)$ is 1-1 on each of the 3 cases separately, the images intersect along the smooth curves
$\sigma_\ell$ and $\sigma_r$. We list below the connections of Section~\ref{S2} where the Ricci tensor has rank $2$ together with the values of $\psi$ and $\Psi$.
\begin{definition}\rm
\ \begin{enumerate}
\item For $a_1+a_2\ne1$ and $a_1a_2\ne0$, set\newline
$\Gamma_r^2(a_1,a_2):=\frac
{\Gamma(a_1^2+a_2-1,a_1^2-a_1,a_1a_2,a_1a_2,a_2^2-a_2,a_1+a_2^2-1)}{a_1+a_2-1}$. Then\newline
$\mathcal{Q}=\operatorname{Span}\{e^{x^1},e^{x^2},e^{a_1x^1+a_2x^2}\}$,
$\rho=\frac1{a_1+a_2-1}\left(\begin{array}{cc}a_1^2-a_1&a_1a_2\\a_1a_2&a_2^2-a_2\end{array}\right)$,
\newline
$\psi=\frac{a_1 - a_1^2 + a_2 + 4 a_1 a_2 + a_1^2 a_2 - a_2^2 + a_1 a_2^2}{a_1a_2}$, and
\newline $\Psi=\frac{1 + a_1 - a_1^2 - a_1^3 + a_2 + 4 a_1 a_2 + a_1^2 a_2 - a_2^2 + a_1 a_2^2 - a_2^3}{a_1a_2}$.
\item For $b_1\ne1$ and $(b_1,b_2)\ne(0,0)$, set $\Gamma_c^2(b_1,b_2):=\Gamma(1+b_1,0,b_2,1,\frac{1+b_2^2}{b_1-1},0)$. Then
$\mathcal{Q}=e^{x^1}\{\cos(x^2),\sin(x^2),e^{(b_1-1)x^1+b_2x^2}\}$,
$\rho=\displaystyle\left(\begin{array}{cc}b_1&b_2\\b_2&\frac{b_1+b_2^2}{b_1-1}\end{array}\right)$,\newline
$\det(\rho)=\frac{b_1^2+b_2^2}{b_1-1}$,
$\psi=\frac{2{b}_1^2+{b}_1^3+6{b}_2^2+4{b}_1+{b}_1{b}_2^2}{{b}_1^2+{b}_2^2}$, and
$\Psi=\frac{2(2+{b}_1^2+3{b}_2^2+2{b}_1+2{b}_1{b}_2^2)}{{b}_1^2+{b}_2^2}$.
\item For $a\ne0$, set
$\Gamma_p^2(a):=\Gamma(2,0,0,1,a,1)$. Then\newline
$\mathcal{Q}=e^{x^1}\operatorname{Span}\{\pone,x^1-ax^2,e^{x^2}\}$, 
$\rho=dx^1\otimes dx^1+adx^2\otimes dx^2$, and \newline$(\psi,\Psi)=(7,10)+\frac1a(1,4)$.
\item Set
$\Gamma_q^2(\pm1):=\Gamma(2,0,0,1,\pm1,0)$. Then
$\mathcal{Q}=e^{x^1}\operatorname{Span}\{\pone,x^2,2x^1\pm(x^2)^2\}$,\newline
$\rho=dx^1\otimes dx^1\pm dx^2\otimes dx^2$, and $(\psi,\Psi)=(7,10)$.
\end{enumerate}\end{definition}

\vglue -.1cm\noindent{\bf Case 1:} Linear equivalence where 
${\mathcal{Q}(\mathcal{M})=\operatorname{Span}\{e^{L_1},e^{L_2},e^{L_3}\}}$.
Suppose that $\{L_i,L_j\}$ are linearly independent for $i\ne j$. Let $\sigma$ be
a permutation of the integers $\{1,2,3\}$. Introduce new coordinates $y_\sigma^1:=L_{\sigma(1)}(x^1,x^2)$ and
$y_\sigma^2:=L_{\sigma(2)}(x^1,x^2)$. Expand $L_{\sigma(3)}(x^1,x^2)=a_{1,\sigma}y_\sigma^1+a_{2,\sigma}y_\sigma^2$
to express 
$$
\mathcal{Q}(\mathcal{M})=\operatorname{Span}\{e^{y_\sigma^1},e^{y_\sigma^2},
e^{a_{1,\sigma}y_\sigma^1+a_{2,\sigma}y_\sigma^2}\}\,.
$$
This structure is defined by the pair $(a_{1,\sigma},a_{2,\sigma})$; there are, generically, 6 such pairs that give rise to the
same affine structure up to linear equivalence. We say $(a_1,a_2)\sim(\tilde a_1,\tilde a_2)$
if $\Gamma_r^2(a_1,a_2)$ is linearly equivalent to $\Gamma_r^2(\tilde a_1,\tilde a_2)$, 
i.e. there exists $T$ in $GL(\mathbb{R}^2)$ so
$T^*\operatorname{Span}\{e^{x^1},e^{x^2},e^{a_1x^1+a_2x^2}\}
=\operatorname{Span}\{e^{\tilde x^1},e^{\tilde x^2},e^{\tilde a_1\tilde x^1+\tilde a_2\tilde x^2}\}$.
Suppose that $L_1=x^1$, $L_2=x^2$, and $L_3=a_1x^1+a_2x^2$.
Let $\sigma_{ijk}$ be the permutation $1\rightarrow i$, $2\rightarrow j$, $3\rightarrow k$. 
We have
$$\begin{array}{llllll}
\sigma_{123}:&y^1=L_1,&y^2=L_2,&L_3=a_1y^1+a_2y^2,&(a_1,a_2)\sim(a_1,a_2).\\[0.05in]
\sigma_{213}:&y^1=L_2,&y^2=L_1,&L_3=a_2y^1+a_1y^2,&({a_1},{a_2})\sim({a_2},{a_1}).\\[0.05in]
\sigma_{132}:&y^1=L_1,&y^2={L_3},&L_2=-\frac{a_1}{a_2}y^1+\frac1{a_2}y^2,&({a_1},{a_2})\sim(-\frac{a_1}{a_2},\frac1{a_2}).\\[0.05in]
\sigma_{321}:&y^1={L_3},&y^2=L_2,&L_1=\frac1{a_1}y^1-\frac{a_2}{a_1}y^2,&({a_1},{a_2})\sim(\frac1{a_1},-\frac{a_2}{a_1}).\\[0.05in]
\sigma_{231}:&y^1=L_2,&y^2={L_3},&L_1=-\frac{a_2}{a_1}y^1+\frac1{a_1}y^2,&(a_1,a_2)\sim(-\frac{a_2}{a_1},\frac1{a_1}).\\[0.05in]
\sigma_{312}:&{y^1= L_3,}&{y^2= L_1},&L_2=\frac1{a_2}y^1-\frac{a_1}{a_2}y^2,&(a_1,a_2)\sim(\frac1{a_2},-\frac{a_1}{a_2}). 
\end{array}$$
We observe that since $\psi$ and $\Psi$ are linear invariants, they are constant under the action of the group of permutations $s_3$.
Although generically $s_3$ acts without fixed points, there are degenerate cases where the action is not fixed point free.

If $\det(\rho)>0$ and $\operatorname{Tr}(\rho)<0$, then $\rho$ is negative definite;
if $\det(\rho)>0$ and $\operatorname{Tr}(\rho)>0$, then $\rho$ is positive definite;
if $\det(\rho)<0$, then $\rho$ is indefinite. The six lines
$\{x=0,\ x=-1,\ y=0,\ y=-1,\ x+y=1,\ x=y\}$
are given in black below; they further divide the regions 
where $\rho$ is negative definite (light blue), $\rho$ is positive definite (yellow), and
$\rho$ is indefinite (green); the three regions in different colors can be further divided  into 6 regions under the action of $s_3$.\vglue -.3cm
\begin{figure}[H]
\caption{The six lines.}\label{Fig-2}
\vglue -.3cm\includegraphics[height=2.5cm,keepaspectratio=true]{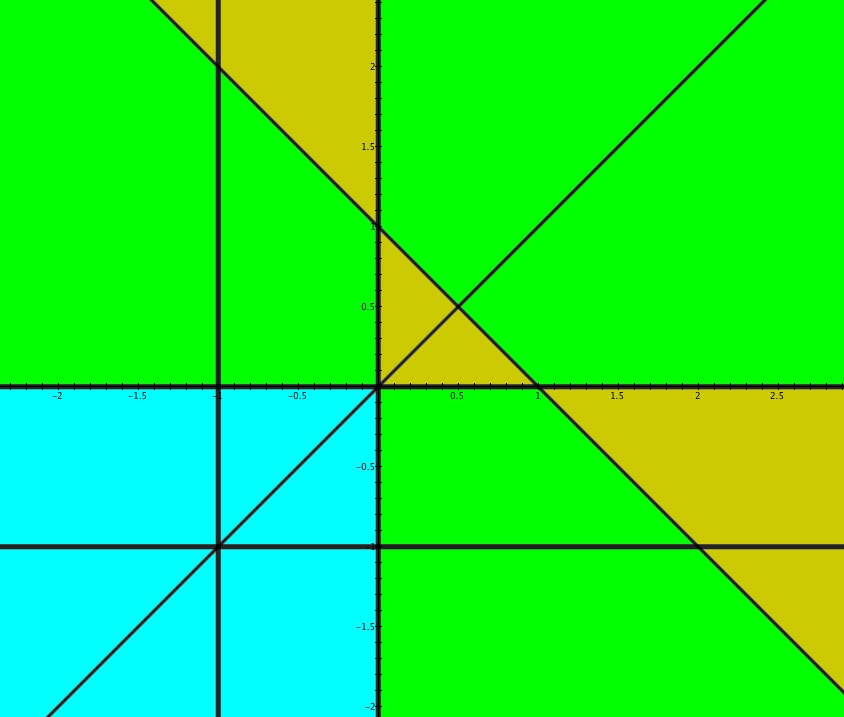}
\end{figure}
\vglue -.5cm\noindent{\bf Case 1a:} The Ricci tensor is negative definite.
A fundamental region for the moduli space where $\rho$ is negative definite is the triangle given by the inequalities
$-1\le y\le x <0$; the other 5 fundamental regions are obtained from this one by applying $s_3$; the regions intersect along the
lines $x=y$, $x=-1$, and $y=-1$. The point $(-1,-1)$ is the singular point which is preserved by $s_3$ which is the maximal symmetry group;
this corresponds to the cusp.  We obtain the full moduli space as every $\Gamma$ where $\rho_\Gamma<0$ is represented by 3 distinct exponentials
which are, up to linear equivalence, $\{e^{x^1},e^{x^2},e^{a_1x^1+a_2x^2}\}$ for $a_1a_2\ne0$, and $1\ne a_1+a_2$. This is not true
in positive definite and indefinite setting as we only obtain a part of the moduli space in these cases. We give
the fundamental domain for $\rho<0$ below in Figure~\ref{Fig-3}, the images under $s_3$, and the image in the moduli space.
The boundary curve $\sigma_\ell$ in the moduli space is the image of the boundary of the open triangle. The
curve $(\psi(t,t),\Psi(t,t))$ for $-1\le t<0$ is given in red and the curve $(\psi(t,-1),\Psi(t,-1))$ for $-1\le t<0$ is given in blue.
These curves are preserved by a $\mathbb{Z}_2$ subgroup of $s_3$.
The final boundary segment $(0,t)$ of the triangle for $0\le t\le -1$ marked in black has no geometric
significance.\goodbreak
\begin{figure}[H]
\caption{The fundamental domains for $\rho<0$.}\label{Fig-3}
\vglue -.3cm
\includegraphics[height=2.5cm,keepaspectratio=true]{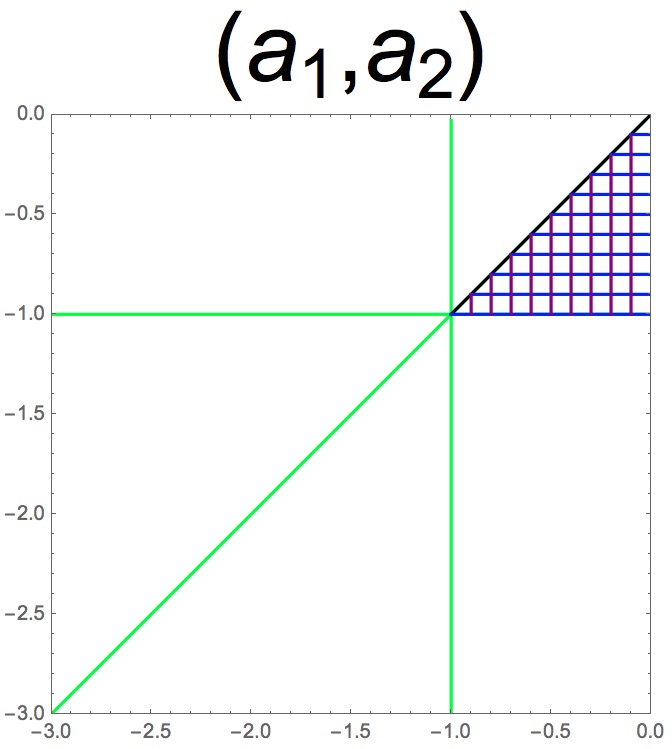}\ 
\includegraphics[height=2.5cm,keepaspectratio=true]{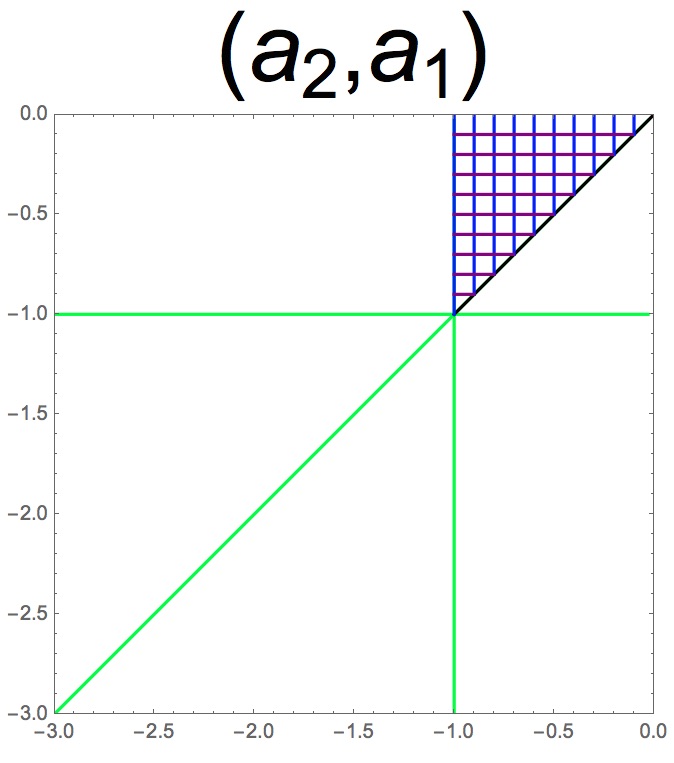}\ 
\includegraphics[height=2.5cm,keepaspectratio=true]{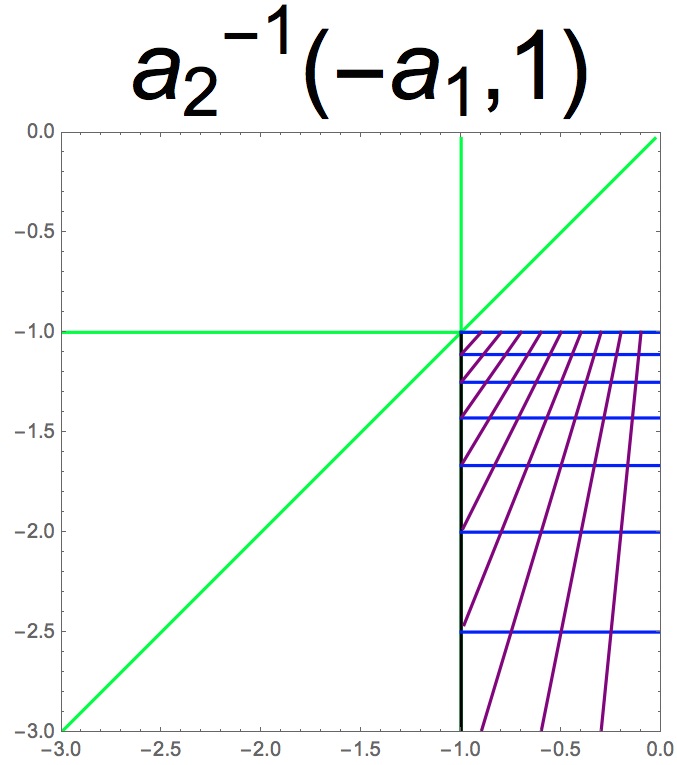}\ 
\includegraphics[height=2.5cm,keepaspectratio=true]{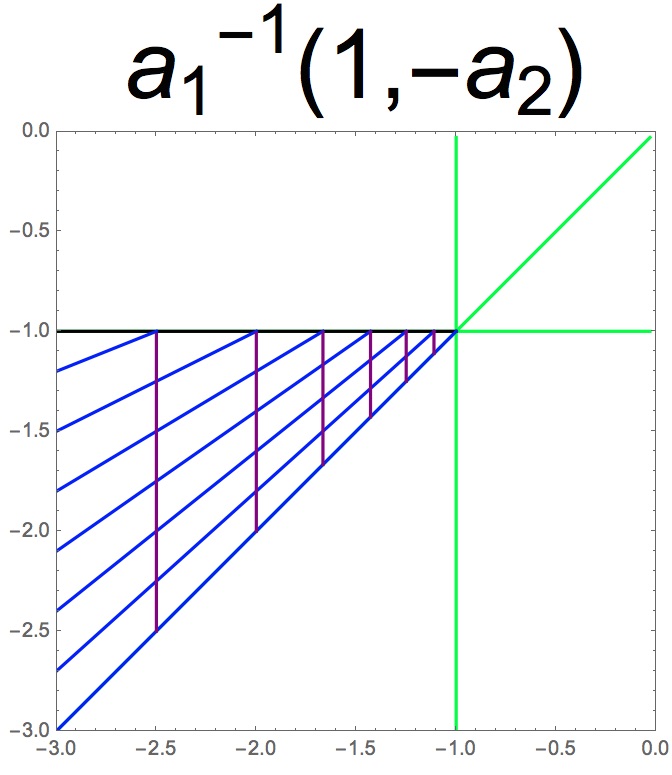}\ 
\includegraphics[height=2.5cm,keepaspectratio=true]{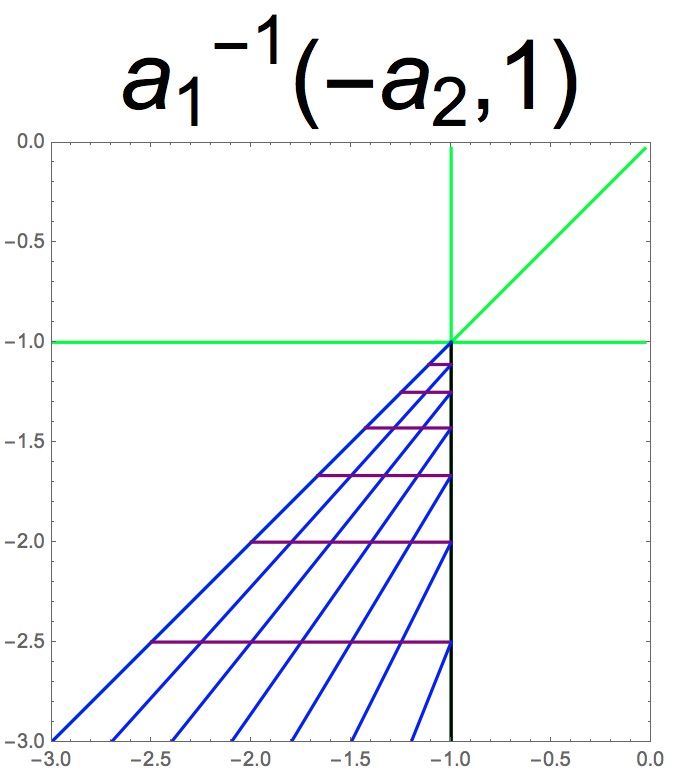}\ 
\includegraphics[height=2.5cm,keepaspectratio=true]{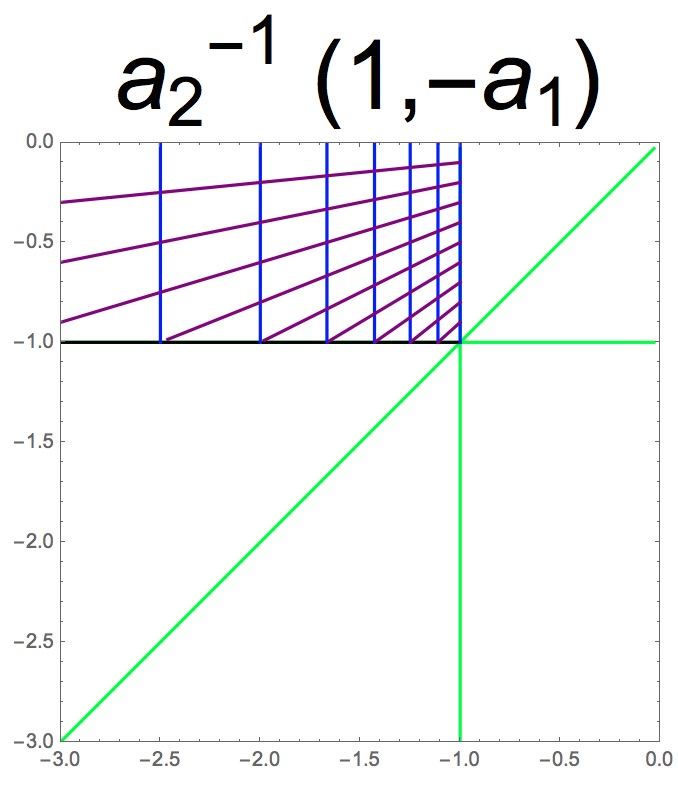}\ 
\includegraphics[height=2.5cm,keepaspectratio=true]{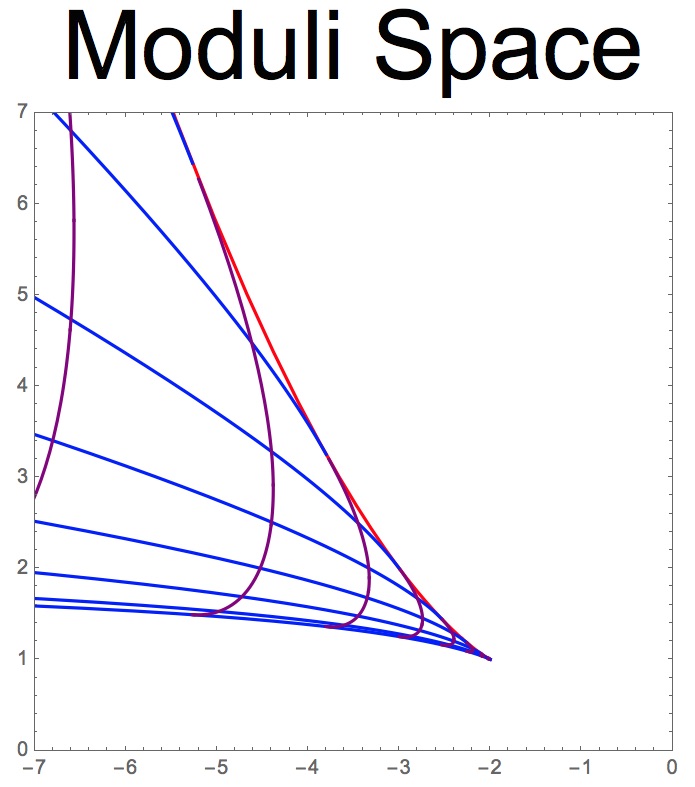}\ 
\includegraphics[height=2.5cm,keepaspectratio=true]{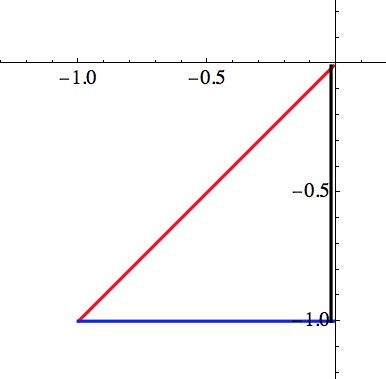}\ 
\includegraphics[height=2.5cm,keepaspectratio=true]{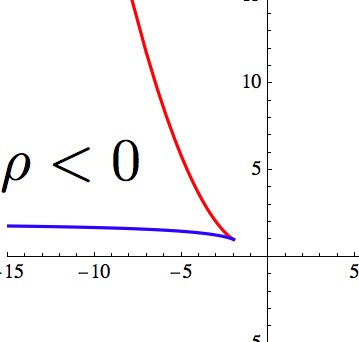}
\end{figure}

\vglue -.3cm\noindent{\bf Case 1b:} The Ricci tensor is indefinite. 
 A fundamental region is given by the inequalities $0<y<x$ and $x+y>1$. There are portions of the moduli space where the Ricci tensor is
indefinite not present in this fundamental region. The region extends indefinitely to the right and to the top; there is no boundary.
Below in Figure~\ref{Fig-4}, we give a fundamental domain and the various images under the symmetric group $s_3$.
The ideal curve $(t,0^-)$ for $t\in(0,1)$ marked in blue maps to the exceptional ray  $(7,10)-t(1,4)$, for $t>0$; this is not in the image
of the moduli space as the exceptional ray arises from the structures where $\mathcal{Q}$ contains
a polynomial as we shall see presently.  The curve $(2t,-1)$ for $t\in(0,1)$ marked in red maps to the
part of the boundary curve $\sigma_r$ which is below the line $\Psi=10$.\vglue -.3cm
\begin{figure}[H]
\caption{The fundamental domains for $\rho$ indefinite.}\label{Fig-4}
 \vglue -.3cm
 \includegraphics[height=3.0cm,keepaspectratio=true]{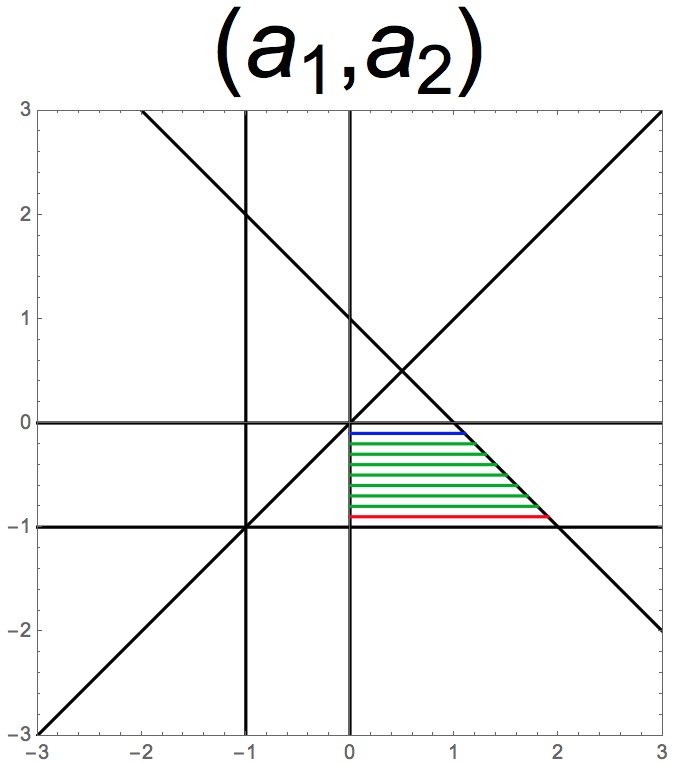}\  
\includegraphics[height=3.0cm,keepaspectratio=true]{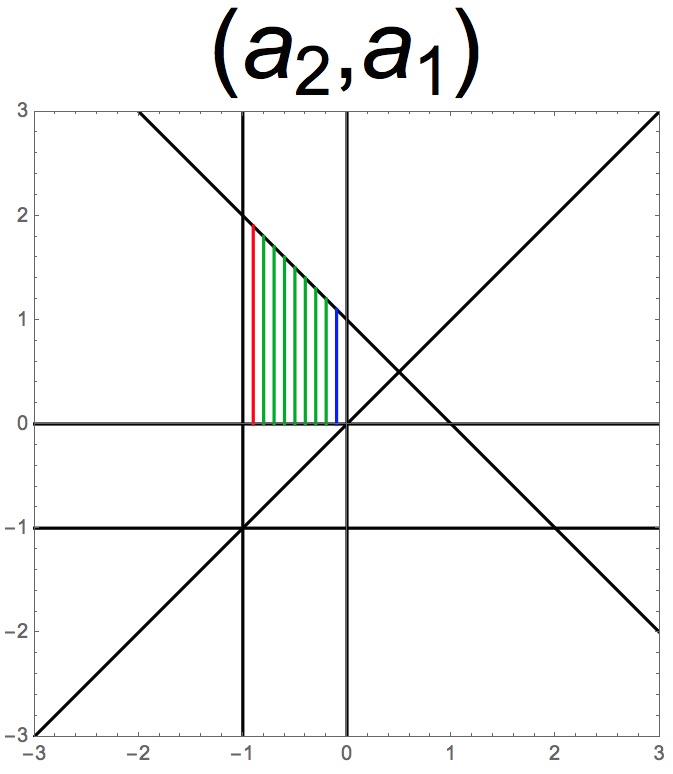}
\includegraphics[height=3.0cm,keepaspectratio=true]{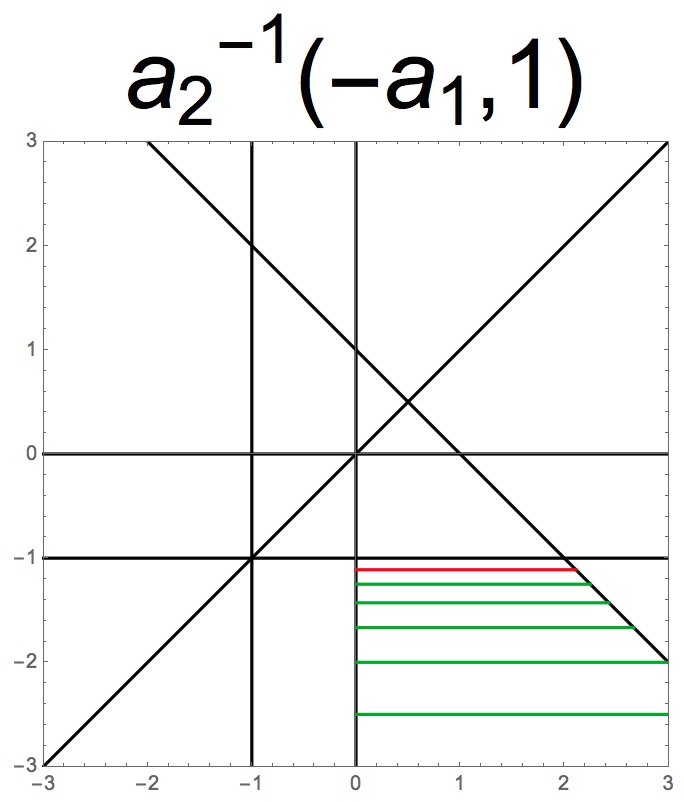}
\includegraphics[height=3.0cm,keepaspectratio=true]{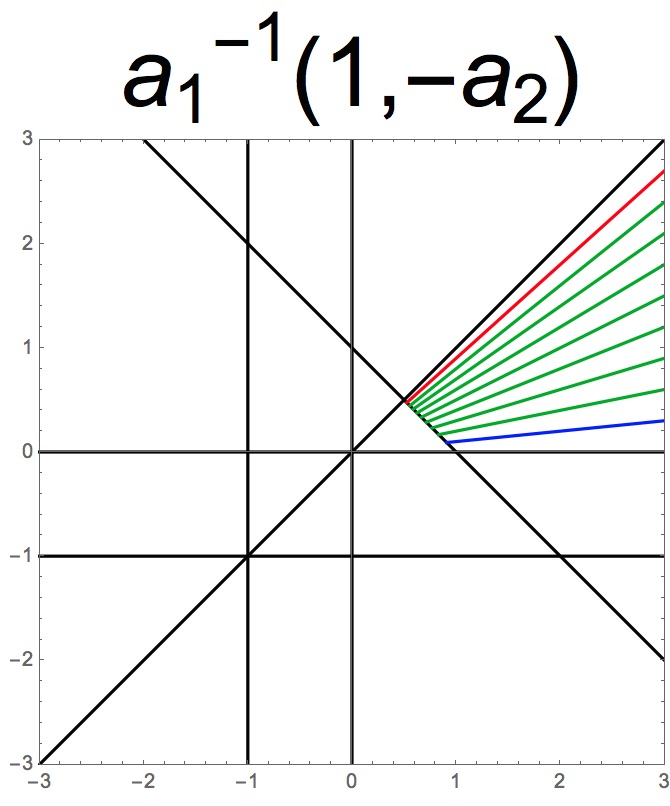}\ 
\includegraphics[height=3.0cm,keepaspectratio=true]{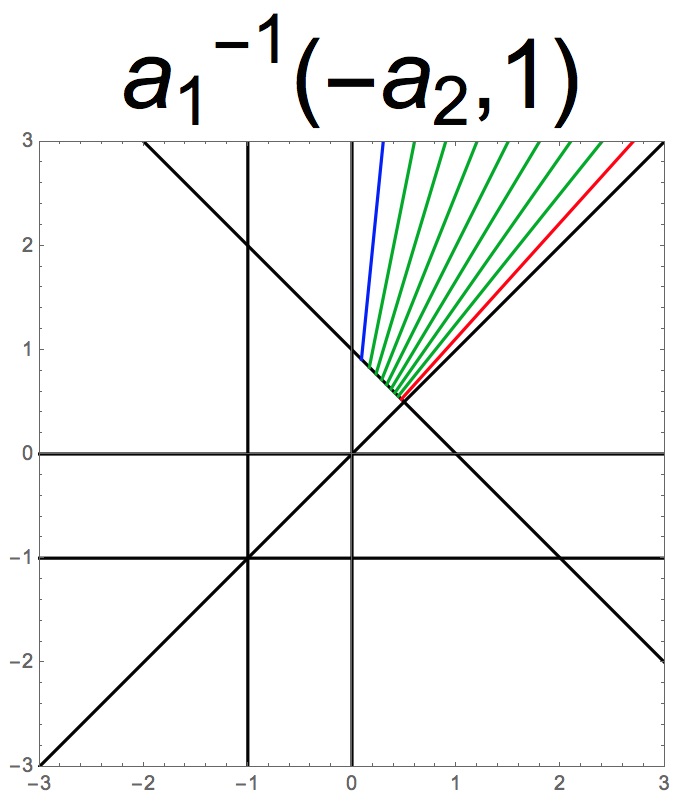}\ 
\includegraphics[height=3.0cm,keepaspectratio=true]{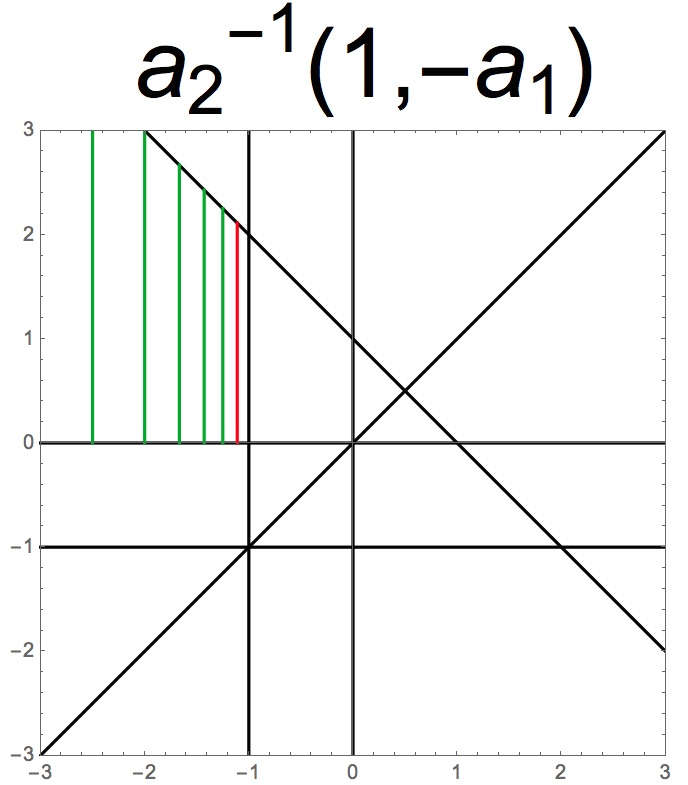}\ 
\includegraphics[height=3.0cm,keepaspectratio=true]{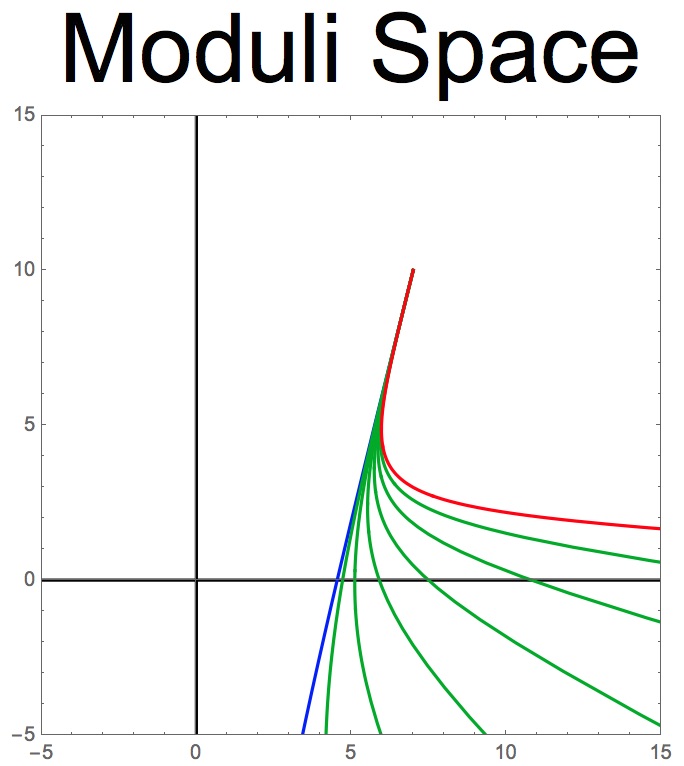}
\end{figure}

\vglue -.3cm\noindent{\bf Case 1c:} The Ricci tensor is positive definite.
 A fundamental region is the triangle with vertices at $\{(0,0),(1,0),(\frac12,\frac12)\}$;
the boundary segment $(t,t)$ for $0<t<\frac12$ belongs to the fundamental region, but the boundary segments $(t,0)$ for $0\le t\le 1$
and $(t,1-t)$ for $\frac12\le t\le 1$ do not lie in the fundamental region. There are portions of the moduli space where the Ricci tensor is
positive definite not present in this fundamental region. The image of the triangle in the moduli space is a bit difficult to picture. The moduli space $\rho>0$ lies to the
right of the curve $ \sigma_r$. There is an exceptional ray $(7,10)+t(1,4)$ for $t\ge0$
which lies to the right of the curve $ \sigma_r$ and which is tangent to this curve at $(7,10)$. The affine
structures with three real exponentials and $\rho>0$ lies to the right of $ \sigma_r$ and to the left of  exceptional ray; these bounding
curves are marked in gray in the moduli space.

In the final two pictures,  $ \sigma_r$ is in red; it is the image
of the line $(t,t)$ for $0<t\le\frac12$. The exceptional ray is marked in blue; it is the boundary
$(t,0)$ for $0<t<1$ and does not belong to this part of the moduli space; it is obtained by the
structures where $\mathcal{Q}$ contains a polynomial as will be discussed later. The final bounding
segment of the triangle is marked in gray; it is the segment $((1+t)/2,(1-t)/2)$ for $0\le t\le 1$;
it lies on the line $a_1+a_2=1$ and has no geometric significance. We refer to Figure~\ref{Fig-5}.\vglue -.3cm
\begin{figure}[H]
\caption{Positive Ricci tensor.}\label{Fig-5}
\vglue -.3cm
\includegraphics[height=2.5cm,keepaspectratio=true]{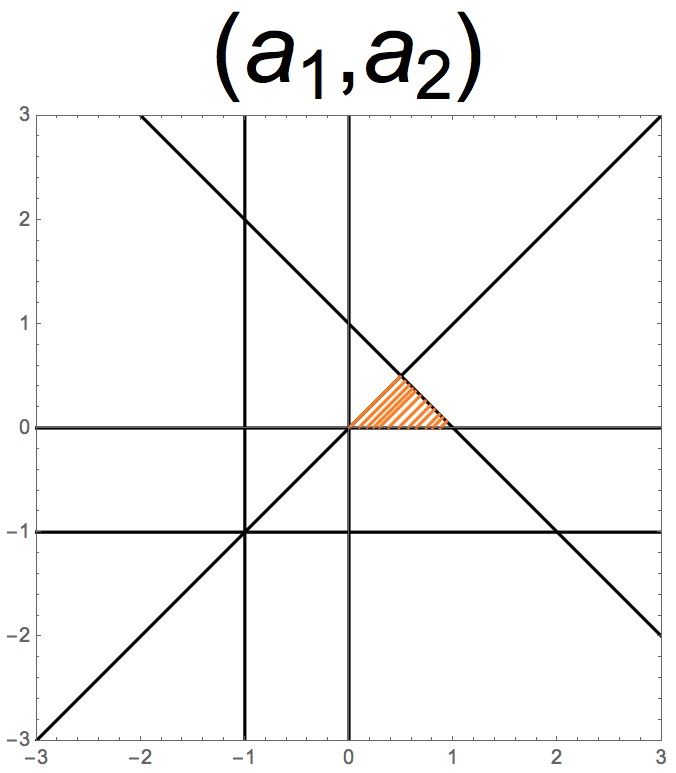}\  
\includegraphics[height=2.5cm,keepaspectratio=true]{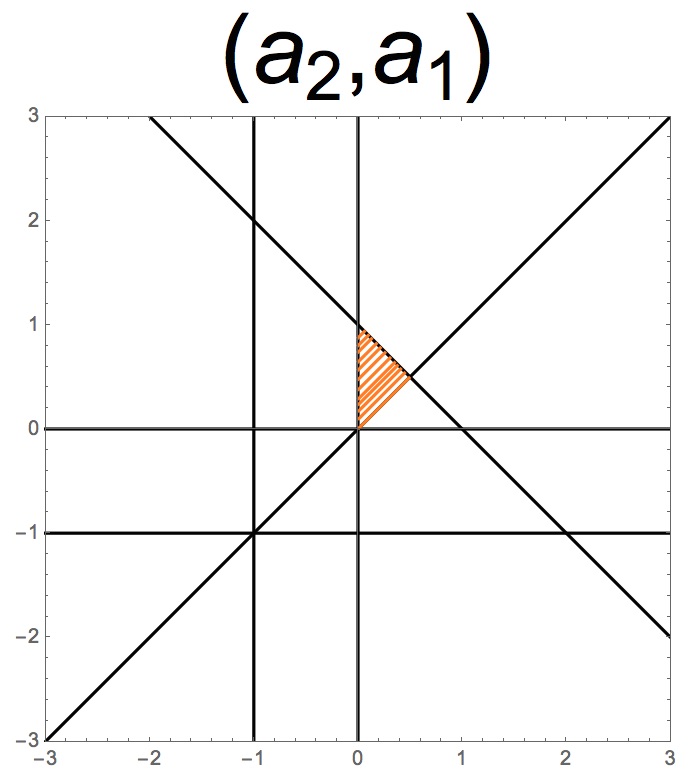}\ 
\includegraphics[height=2.5cm,keepaspectratio=true]{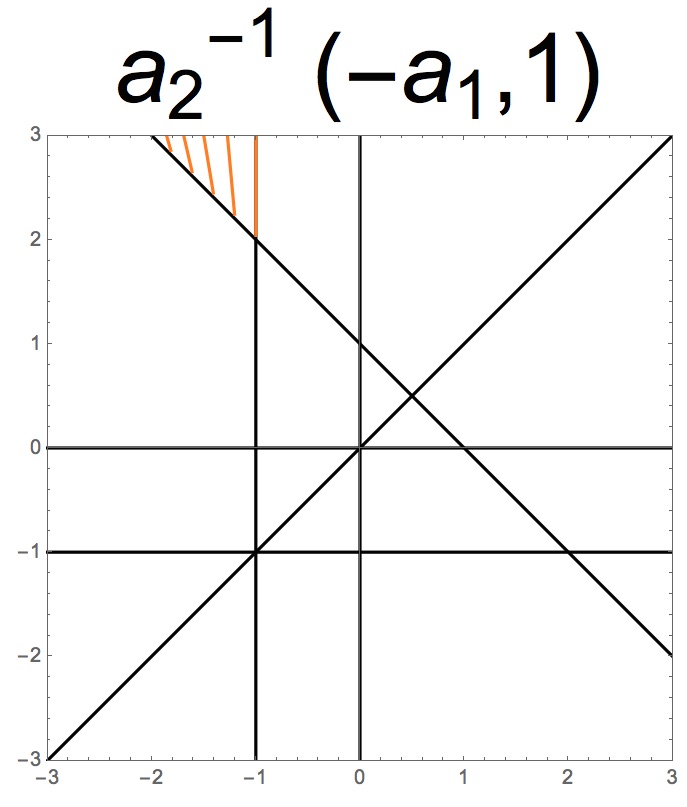}\ 
\includegraphics[height=2.5cm,keepaspectratio=true]{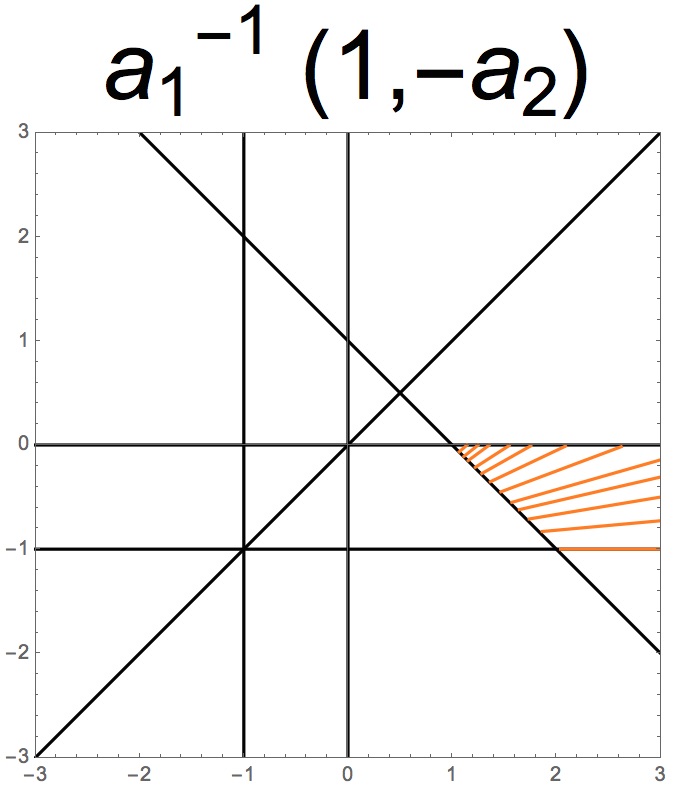}\ 
\includegraphics[height=2.5cm,keepaspectratio=true]{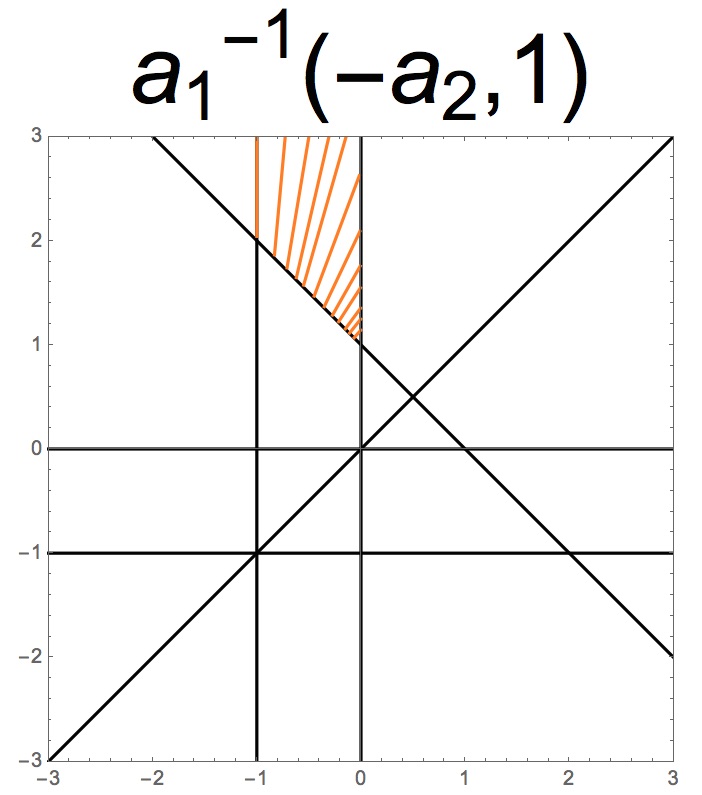}\ 
\includegraphics[height=2.5cm,keepaspectratio=true]{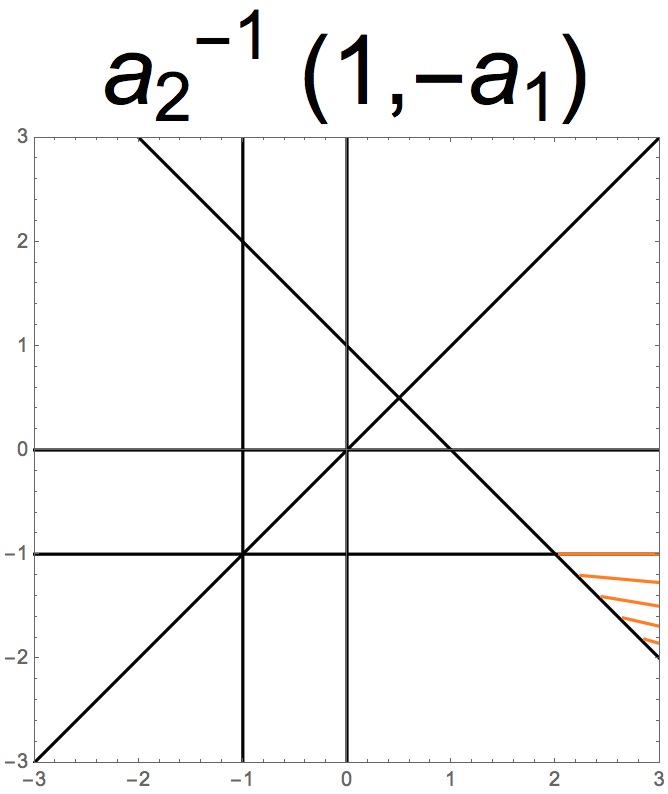}\ 
\includegraphics[height=2.5cm,keepaspectratio=true]{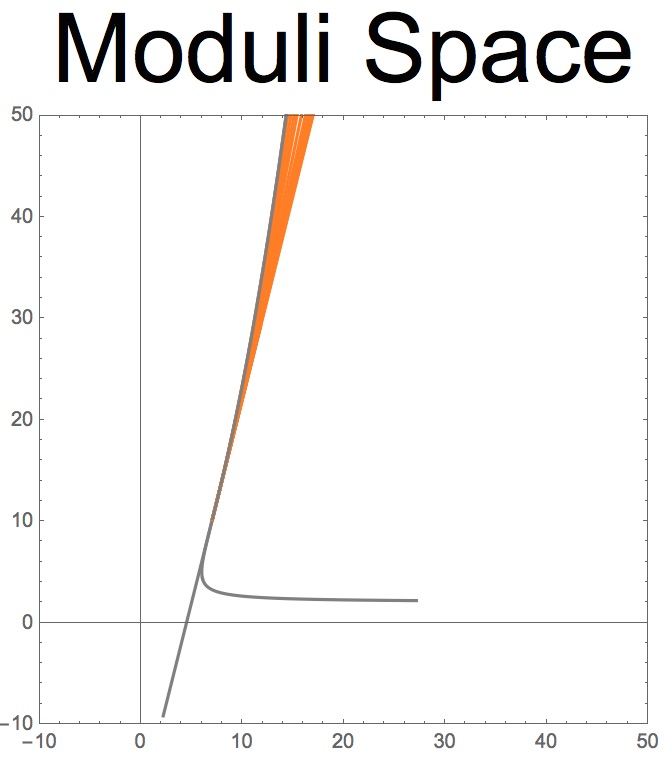} 
\includegraphics[height=2.5cm,keepaspectratio=true]{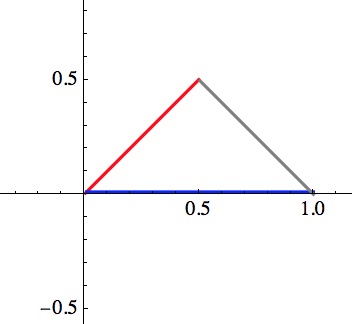}\ 
\includegraphics[height=2.5cm,keepaspectratio=true]{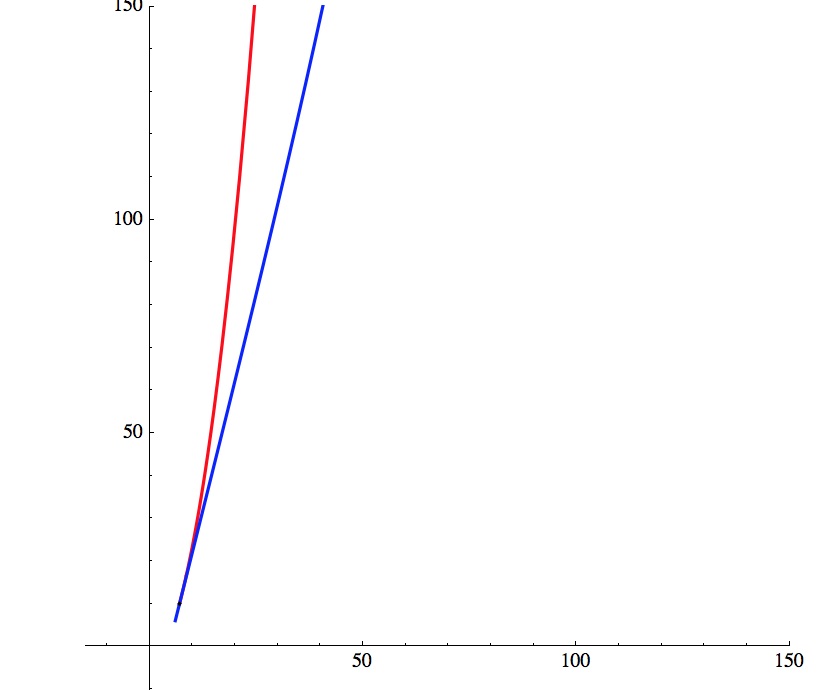}
\end{figure}
\vglue -.3cm\noindent{\bf Case 2:} Linear equivalence if
 {${\mathcal{Q}(\mathcal{M})=\operatorname{Span}\{e^{L_1}\cos(L_2),e^{L_1}\sin(L_2),e^{L_3}\}}$}.
 We set $\Gamma=\Gamma_c^2(b_1,b_2)$ where $b_1\ne1$ and $(b_1,b_2)\ne(0,0)$.
 We have $b_1>1$ corresponds to $\rho$ positive definite
and $b_1<1$ corresponds to $\rho$ indefinite; $(b_1,b_2)$ and $(\tilde b_1,\tilde b_2)$ are linearly equivalent if and only if
$b_1=\tilde b_1$ and $b_2=\pm\tilde b_2$. The two fundamental domains and
the images in the moduli spaces are shown in Figure~\ref{Fig-6}.\vglue -.4cm\par\noindent
\begin{figure}[H]
\caption{Complex exponentials.}\label{Fig-6}
\vglue -.3cm\includegraphics[height=2.2cm,keepaspectratio=true]{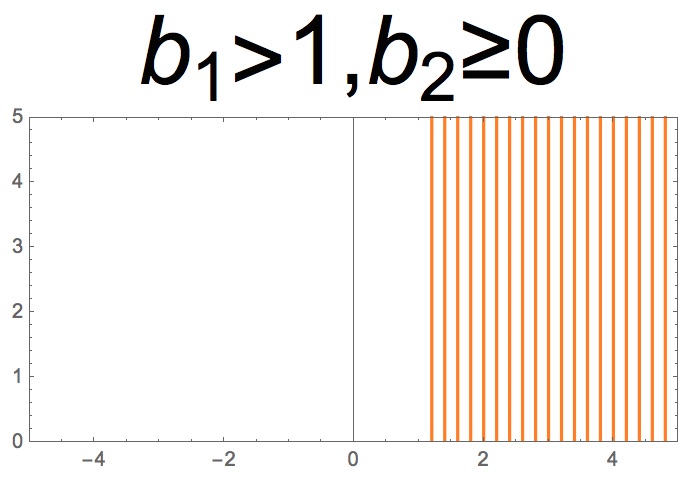}\ 
\includegraphics[height=2.2cm,keepaspectratio=true]{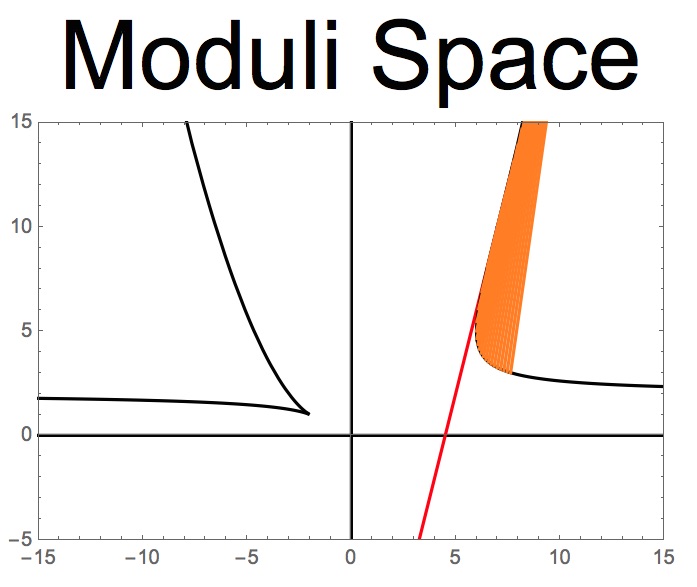}\ 
\includegraphics[height=2.2cm,keepaspectratio=true]{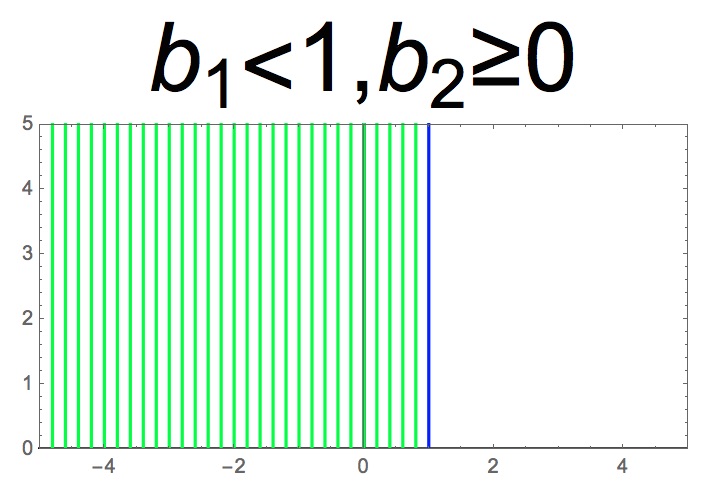}\ 
\includegraphics[height=2.2cm,keepaspectratio=true]{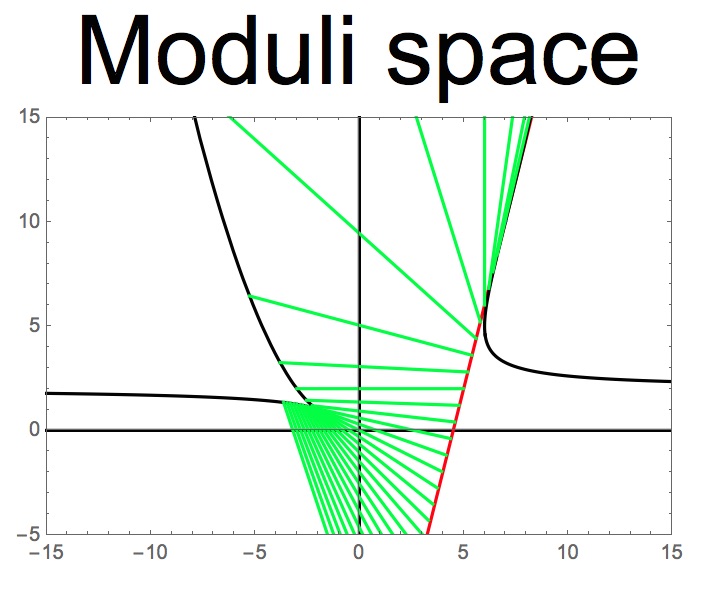}
\end{figure}
\vglue -.3cm\noindent{\bf Case 3:}  ${\mathcal{Q}}$ involves non-trivial polynomials. We have
$\Gamma=\Gamma_p^2(a)$ for $a\ne0$ or $\Gamma=\Gamma_q^2(\pm1)$. If $a>0$, then $\rho$ is positive definite
and $(\psi,\Psi)(\Gamma_p^2)=(7,10)+\frac1a(1,4)$. And $\rho$ is positive for $\Gamma=\Gamma_q^2( +1)$ and we have
$(\psi,\Psi)(\Gamma_q^2(+1))=(7,10)$. These two structures give rise to the closed ray $(7,10)+t(1,4)$ for $t\ge0$ 
marked in red in Figure~\ref{Fig-7}.
Similarly, if $a<0$, then $\rho$ is negative definite; this structure together with $\Gamma_q^2(-1)$ give rise to the closed
ray $(7,10)-t(1,4)$ for $t\ge0$ in the moduli space marked in blue in Figure~\ref{Fig-7} below. These two rays divide the portion of the moduli space where $\mathcal{Q}$
involves 3 real exponentials (Case 1) from the portion of the moduli space where $\mathcal{Q}$ contains complex exponentials (Case 2).
We refer to Figure~\ref{Fig-7}.
\vglue -.3cm\begin{figure}[H]
\caption{The exceptional line $\mathcal{Q}$ involves polynomials.}\label{Fig-7}
\vglue -.3cm\includegraphics[height=3.0cm,keepaspectratio=true]{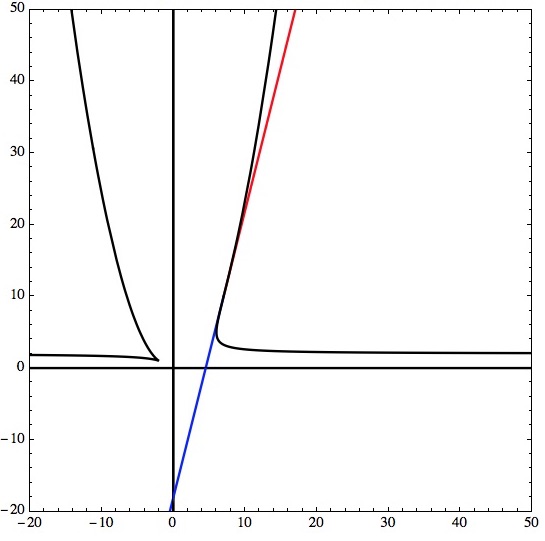}\vglue-.3cm
\end{figure}

\end{document}